\newtheorem{theorem}{Theorem}[section]
\newtheorem{proposition}[theorem]{Proposition}
\newtheorem{lemma}[theorem]{Lemma}
\newtheorem{definition}[theorem]{Definition}
\def\diam{\mathrm{diam}}
\def\SG{\mathcal{SG}}
\def\mcE{\mathcal{E}}
\numberwithin{equation}{section}
\begin{document}
\title[Equivalence of Besov spaces on p.c.f. self-similar sets]{Equivalence of Besov spaces on p.c.f. self-similar sets}

\author{Shiping Cao}
\address{Department of Mathematics, Cornell University, Ithaca 14853, USA}
\email{sc2873@cornell.edu}
\thanks{}

\author{Hua Qiu}
\address{Department of Mathematics, Nanjing University, Nanjing 210093, China}
\email{huaqiu@nju.edu.cn}
\thanks{The research of Qiu was supported by the NSFC grant 11471157}

\subjclass[2010]{Primary 28A80}

\date{}

\keywords{p.c.f. self-similar sets, Besov spaces, fractal analysis, heat kernels, Sobolev spaces.}

\begin{abstract}
On p.c.f. self-similar sets, of which the walk dimensions of heat kernels are in general larger than 2, we find a sharp region where two classes of Besov spaces, the heat Besov spaces $B^{p,q}_\sigma(K)$ and the Lipschitz-Besov spaces $\Lambda^{p,q}_\sigma(K)$, are identitical. In particular, we provide concrete examples that $B^{p,q}_\sigma(K)=\Lambda^{p,q}_\sigma(K)$ with $\sigma>1$. Our method is purely analytical, and does not involve any heat kernel estimate. 
\end{abstract}
\maketitle

\tableofcontents

\section{Introduction}\label{intro}
In this paper, we study the identity of two classes of Besov spaces on post-critically finite (p.c.f.) self-similar sets. One class is the heat Besov spaces $B^{p,q}_\sigma(K)$, defined with the Neumann Laplacian $\Delta_N$, which was introduced in the pioneering study of Brownian motions on typical fractals \cite{BB,BBKT,BP,G,KZ,Lindstrom}, and was later constructed in a purely analytical way by J. Kigami \cite{ki1,ki2} on general p.c.f. self-similar sets. The heat Besov spaces $B^{p,q}_\sigma(K)$ are defined as potential spaces following \cite{HZ}, 
\[B^{p,q}_\sigma(K)=\Big\{f\in L^p(K):\Big(\int_{0}^\infty \big(t^{-\sigma/2}\big\|(t\Delta_N)^kP_tf\big\|_{L^p(K)}\big)^{q}dt/t\Big)^{1/q}<\infty\Big\},\] 
where $\{P_t\}_{t\geq 0}$ is the heat semigroup associated with $\Delta_N$. Here we take the measure $\mu$ to be self-similar and $d_H$-regular with the effective resistance metric $R(\cdot,\cdot)$ on $K$, where $d_H$ is the Hausdorff dimension of $K$ under $R$.
The other class $\Lambda^{p,q}_\sigma(K)$, named Lipschitz-Besov spaces, is defined directly with integration of difference of functions,
\[\Lambda^{p,q}_\sigma(K)=\Big\{f\in L^p(K):\Big(\int_0^\infty\big(\int_Kt^{-d_H}\int_{B_t(x)}\frac{|f(x)-f(y)|^p}{t^{\sigma pd_W /2}}d\mu(y)d\mu(x)\big)^{q/p}\frac{dt}{t}\Big)^{1/q}<\infty\Big\},\] 
where $B_t(x)$ is the ball of radius $t$ centered at $x$ under the metric $R$, and $d_W=1+d_H$ is the walk dimension of the associated heat kernel. Roughly speaking, $d_H$ reflects the growth of the measure, and $d_W$ reflects the speed of the diffusion process. More explanations on general metric measure spaces can be found in \cite{GHL}. 

The relationship between the two classes of Besov spaces $B^{p,q}_\sigma(K)$ and $\Lambda^{p,q}_\sigma(K)$ has been a long term problem \cite{P} on general metric measure spaces, and the identity
\begin{equation}\label{eqn11}
B^{p,q}_\sigma(K)=\Lambda^{p,q}_\sigma(K)
\end{equation}
is of particular interest. For $p=q=2$ and $0<\sigma<1$, when the Besov spaces coincide with the Sobolev spaces, under some weak assumption of heat kernel estimates, Hu and Z\"{a}hle \cite{HZ} showed that (\ref{eqn11}) holds, as well as Strichartz \cite{s2} obtained the same result on products of p.c.f. self-similar sets at the same time. Later, Grigor'yan and Liu proved that (\ref{eqn11}) holds for any $1<p,q<\infty$ and any $0<\sigma<\frac{2\Theta}{d_W}\wedge 1$, where $\Theta$ denotes the H\"{o}lder exponent of the heat kernel (see \cite{GL}). In particular, on p.c.f. self-similar sets, due to the sub-Gaussian heat kernel estimates \cite{HK,KS}, the existence of small H\"{o}lder exponent $\Theta$ was shown in \cite{GHL}. Until now, a larger region where (\ref{eqn11}) holds or not  is still hard to reach. 

Recently, Cao and Grigor'yan \cite{CG1,CG2} have made much progress showing (\ref{eqn11}) holds on a larger region, under the assumption of Gaussian heat kernel estimates. Their work introduces some new techniques, but the results and ideas are restricted to the  $d_W=2$ case. It is believed by the authors that more surprising and interesting phenomena about (\ref{eqn11}) are waited to be discovered for the  $d_W>2$ case. 

In this paper, we will focus on the p.c.f. self-similar sets, which are a class of well-known fractals where sub-Gaussian heat kernel estimates hold. In particular, we will describe a \textbf{sharp region} where (\ref{eqn11}) holds on p.c.f. self-similar sets. See the left picture of Figure \ref{equiarea}.

More precisely, we introduce a critical curve $\mathscr{C}$ for $1\leq p\leq \infty$,
\[\mathscr{C}(p)=\sup\big\{\sigma>0:\mathcal{H}_0\subset \Lambda^{p,\infty}_\sigma(K)\big\},\]
where $\mathcal{H}_0$ is the space of harmonic functions. We will prove the following theorem.

\begin{theorem}\label{thm11}
For $1<p<\infty$, $1\leq q\leq \infty$ and $0<\sigma<\mathscr{C}(p)$, we have $B^{p,q}_\sigma(K)=\Lambda^{p,q}_\sigma(K)$ with the equivalent norms. 
\end{theorem}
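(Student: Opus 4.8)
The plan is to prove both continuous inclusions $B^{p,q}_\sigma(K)\hookrightarrow\Lambda^{p,q}_\sigma(K)$ and $\Lambda^{p,q}_\sigma(K)\hookrightarrow B^{p,q}_\sigma(K)$ by routing both norms through a single discrete model adapted to the self-similar cell structure. Let $V_m$ denote the level-$m$ vertex set of $K$, let $S_m$ be the space of harmonic splines of level $m$ (functions that are $\Delta_N$-harmonic on each $m$-cell and hence determined by their values on $V_m$), and let $H_m\colon C(K)\to S_m$ be the harmonic interpolation operator. Writing $\lambda^{-m}$ for the common order of the $R$-diameter of an $m$-cell and $\lambda^{-md_H}$ for its $\mu$-measure, the decisive observation is that the smoothness weights of the two spaces agree at each scale: the radius $t\sim\lambda^{-m}$ in the definition of $\Lambda^{p,q}_\sigma$ and the heat-time $t\sim\lambda^{-md_W}$ in the definition of $B^{p,q}_\sigma$ both carry the factor $\lambda^{m\sigma d_W/2}$. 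I would therefore aim to show that each norm is equivalent to the discrete quantity
\[
\|f\|_\ast=\Big(\sum_{m\ge 0}\big(\lambda^{m\sigma d_W/2}\,\|f-H_mf\|_{L^p(K)}\big)^q\Big)^{1/q}
\]
(with the usual $\sup$-modification when $q=\infty$), which then yields the theorem immediately.

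For the Lipschitz side I would discretize the defining double integral over the dyadic scales $t\sim\lambda^{-m}$. On scale $m$ the integral is comparable to a weighted sum, over $m$-cells and pairs of neighboring $m$-cells, of $L^p$-differences of $f$; splitting $f=H_mf+(f-H_mf)$ reduces these to two contributions. The remainder $f-H_mf$ is handled directly by $\|f-H_mf\|_{L^p(K)}$, i.e.\ by $\|f\|_\ast$. The contribution of the harmonic part $H_mf$ is where the critical curve enters: on a single $m$-cell, $H_mf$ restricts to a harmonic function, and the hypothesis $\sigma<\mathscr{C}(p)$ is precisely the statement that $\mathcal{H}_0\subset\Lambda^{p,\infty}_\sigma(K)$, so that each cell contributes at most $O\big(\lambda^{-m\sigma d_W/2}\big)$ to the scale-$m$ difference after the weight; self-similarity propagates this bound to all cells and all scales, and the geometric gain from $\sigma<\mathscr{C}(p)$ makes the resulting errors summable in $\ell^q$. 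This is exactly the mechanism that forces $\sigma<\mathscr{C}(p)$ and renders the region sharp.

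For the heat side I would avoid heat kernel estimates and work spectrally. A Littlewood--Paley decomposition built from $\{(t\Delta_N)^kP_t\}$ reorganizes the defining integral of $B^{p,q}_\sigma$ into a weighted $\ell^q$-sum over heat-times $t\sim\lambda^{-md_W}$ of dyadic spectral pieces, the scale-$m$ piece being essentially the projection of $f$ onto eigenvalues $\sim\lambda^{md_W}$. Summing these pieces over scales $\ge m$ recovers the high-frequency part of $f$, which I would identify with the spline remainder $f-H_mf$ through the variational description of the Dirichlet form and the convergence of the discrete Laplacians $\Delta_m\to\Delta_N$, the quantitative inputs being a Bernstein--Nikolskii inequality at level $m$ and the bound $\|f-H_mf\|_{L^p(K)}\lesssim\lambda^{-md_W}\|\Delta_Nf\|_{L^p(K)}$ on $\mathrm{dom}(\Delta_N)$, extended to the relevant range of $\sigma$ by real interpolation between $L^p(K)$ and $\mathrm{dom}(\Delta_N)$. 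This identifies the heat Besov norm with $\|f\|_\ast$ as well, closing the chain.

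The main obstacle is the Lipschitz side. The delicate points are (i) controlling the within-cell oscillation of harmonic splines uniformly in $m$ with the sharp exponent supplied by $\mathscr{C}(p)$, rather than a lossy one, and (ii) the matching across cell boundaries: because $K$ is only post-critically finite, differences $|f(x)-f(y)|$ with $x,y$ in distinct but adjacent $m$-cells meet at the finitely many junction points of $V_m$, and these cross-cell differences must be pinned to the graph differences of $f$ on $V_m$ without double counting. A secondary difficulty is keeping the spectral comparison on the heat side fully quantitative using only energy estimates and the explicit renormalization of the Dirichlet form under the self-similar structure, since the standard shortcut through sub-Gaussian heat kernel bounds is deliberately renounced.
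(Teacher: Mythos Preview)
Your single discrete model $\|f\|_\ast$ via harmonic interpolation $H_m$ has a genuine obstruction: $H_mf$ is only defined when $f$ has pointwise values on $V_m$, i.e.\ when $f\in C(K)$. But for $\sigma\le\mathscr{L}_1(p)=d_S/p$ neither $B^{p,q}_\sigma(K)$ nor $\Lambda^{p,q}_\sigma(K)$ embeds into $C(K)$, so on the region $\mathscr{A}_2=\{0<\sigma<\mathscr{L}_1(p)\wedge\mathscr{C}(p)\}$ your operator $H_m$ is simply not available and the quantity $\|f\|_\ast$ is undefined. This is not a technicality that a density argument repairs: the tent/spline norm you write down is known to characterize $\Lambda^{p,q}_\sigma$ only above the line $\mathscr{L}_1$ (this is exactly Theorem~\ref{thm48}(c)), and indeed $\mathscr{C}$ and $\mathscr{L}_1$ can intersect at some $p>1$ (the Sierpinski gasket is such an example), so $\mathscr{A}_2$ is nonempty and your scheme leaves a genuine strip uncovered. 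The paper handles this by running \emph{two} discrete models: your tent picture on $\mathscr{A}_1$, and a Haar (cell-average) decomposition $f\mapsto\tilde{E}[f|\Lambda_m]$ on $\mathscr{A}_2$, with the border patched by interpolation. On $\mathscr{A}_2$ the role of $H_mf$ is taken over by the piecewise-harmonic $L^2$-projections $P_{T_m}$ of Definition~\ref{def66}, which are defined on all of $L^p$ and satisfy $\|f-P_{T_m}f\|_{L^p}\lesssim r^{md_W}\|\Delta f\|_{L^p}$; this, together with Lemma~\ref{lemma67}, is what makes Proposition~\ref{prop68} go through.

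A second, smaller gap is on the heat side even within $\mathscr{A}_1$. Your plan to identify the dyadic spectral pieces with $f-H_mf$ ``through the variational description of the Dirichlet form'' is not yet an argument: there is no Bernstein--Nikolskii inequality available in this setting, and the spectral projections are not comparable to harmonic splines in $L^p$ for $p\ne2$ without further work. The paper does not attempt this identification. Instead it proves $\Lambda^{p,q}_{\sigma,(1)}\subset B^{p,q}_\sigma$ by the $J$-method (constructing explicit $H^p_2$ approximants, Lemma~\ref{lemma52} and Proposition~\ref{prop54}), and gets the reverse inclusion $B^{p,q}_\sigma\subset\Lambda^{p,q}_{\sigma,(1)}$ by a duality argument: the energy form extends to a continuous pairing $\tilde{\mathcal E}$ on $B^{p,q}_\sigma\times B^{p',q'}_{2-\sigma}$ (Lemma~\ref{lemma63}), and testing $f$ against tent functions $g\in\bigoplus J_m$ reads off $\|f_m\|_{L^p}$ (Claims~1--3 in Proposition~\ref{prop64}). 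This is the step where $\sigma<\mathscr{L}_2(p)$ is used, and it replaces the Littlewood--Paley machinery you invoke.
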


The identical region is sharp since locally harmonic functions (i.e., functions that are harmonic in a cell, and smooth elsewhere) always exist in Besov spaces $B^{p,q}_\sigma(K)$  for any $1<p,q<\infty$ and $\sigma>0$. Readers may compare $\mathscr{C}(p)$ with another important critical exponent 
\[\lambda^\#_p=\inf_{\sigma>0}\big\{\Lambda^{p,\infty}_\sigma(K)=constants\big\}.\]
Though, for $p=2$, we always have $\lambda^\#_2=1=\mathscr{C}(2)$ \cite{GHL}, we have to say that $\lambda^\#_p$ is not in general equal to $\mathscr{C}(p)$. In fact, it has been shown that $\lambda^\#_1=d_S$ in \cite{BV3} for nested fractals, while on the Sierpinski gasket, we can see $\mathscr{C}(1)<d_S$ with easy estimate (Example 3 in Section 3). 

\begin{figure}[htp]
	\includegraphics[width=5cm]{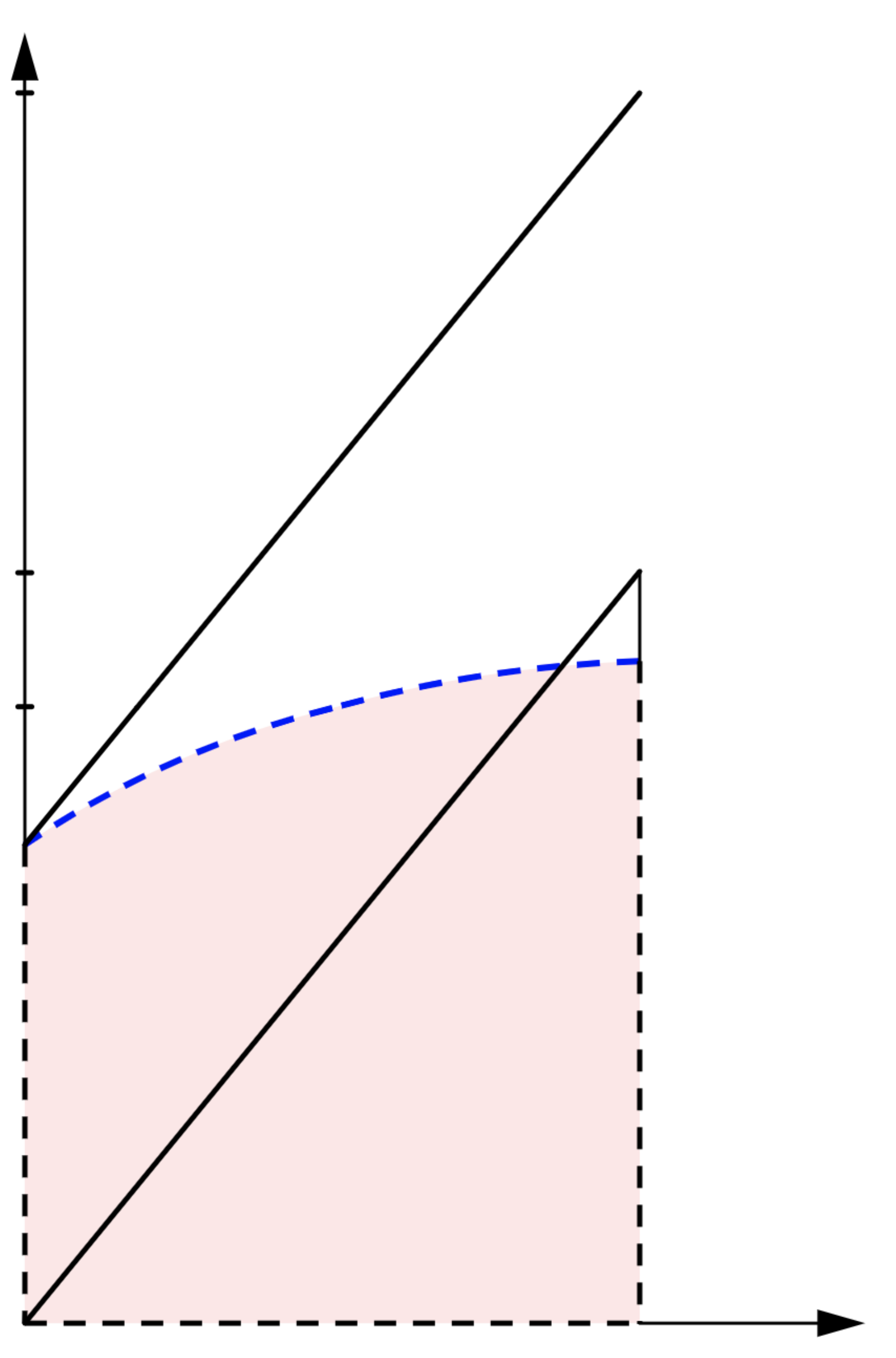}\qquad
	\includegraphics[width=5cm]{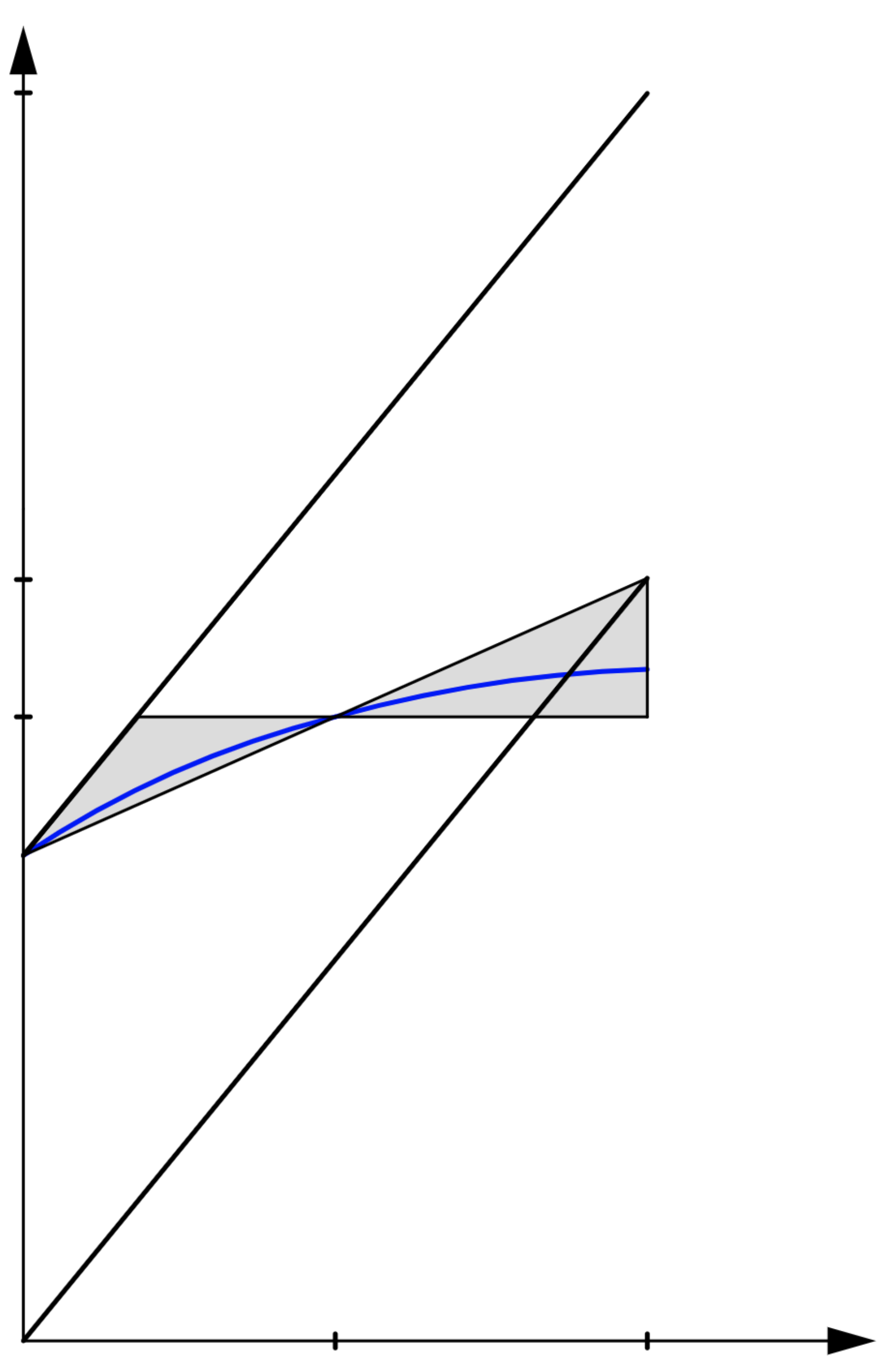}
	\begin{picture}(0,0)
	\put(-150,212){$\sigma$}	\put(-314,208){$\sigma$}
	\put(-151,101){$1$}\put(-316,99){$1$}	\put(-151,201){$2$}
	\put(-155,122){$d_S$}	\put(-320,122){$d_S$}
	\put(-157,78){$\frac{2}{d_W}$}\put(-322,78){$\frac{2}{d_W}$}
	\put(-190,14){$\frac 1p$}\put(-211,-5){$1$}\put(-205,107){$\mathscr C$}
	\put(-26,14){$\frac 1p$}\put(-40,108){$\mathscr C$}\put(-84,70){$\mathscr L_1$}\put(-84,148){$\mathscr L_2$}
	\put(-45,-6){$1$}\put(-95,-8){$\frac{1}{2}$}
	\end{picture}
	\caption{The sharp region for (\ref{eqn11}) and the possible area where $\mathscr{C}$ lies.}
	\label{equiarea}
\end{figure}

We will describe a narrow region where $\mathscr{C}$ lives in Proposition \ref{prop33}. Write 
\[\mathscr{L}_1(p)=\frac{d_S}{p},\quad \mathscr{L}_2(p)=2-\frac{d_S}{p'},\]
with $p'=\frac {p}{p-1}$ and $d_S=\frac{2d_H}{d_W}$ being the spectral dimension of the Laplacian\cite{KL}. $\mathscr{L}_1$ is naturally the critical line concerning the continuity of functions, and $\mathscr{L}_2$ is the critical line concerning the H\"{o}lder continuity of functions and thus the existence of normal derivatives at boundaries. In the authors' related works \cite{cq1,cq2,cq3}, there is a discussion on the role of these critical lines concerning the relationship between Sobolev spaces and (heat) Besov spaces on p.c.f. self-similar sets with different boundary conditions. In Proposition \ref{prop33}, we will show that the curve $\mathscr C$ is concave and increasing w.r.t. $\frac 1 p$, and in addition, \vspace{0.2cm} 

\textit{1). for $1\leq p\leq 2$, $1\leq\mathscr{C}(p)<\frac{2}{d_W}+\frac{2}{p}\cdot\frac{d_H-1}{d_W}$,}

\textit{2). for $2\leq p\leq \infty$,  $\frac{2}{d_W}+\frac{2}{p}\cdot\frac{d_H-1}{d_W}\leq\mathscr{C}(p)<1\wedge \mathscr{L}_2(p)$. }\vspace{0.2cm} 

\noindent See the right picture of Figure \ref{equiarea} for an illustration. In particular, it may happen that $\mathscr{C}(1)>1$ (for example, it is true for the Vicsek set and the Sierpinski gasket in standard setting), so (\ref{eqn11}) even holds in some cases when $\sigma>1$. This is a surprising result which was not mentioned in previous studies. 

The exact description of the critical curve $\mathscr{C}$, and the problem of whether the identity (\ref{eqn11}) holds along $\mathscr{C}$, are still out of reach, and are left to the future study. It is of particular interest to see whether $\mathscr{C}(1)>1$ always holds when $d_W>2$. Despite of this, we are able to fully describe the curve $\mathscr{C}$ for the class of Vicsek sets, see Example 2 in Section 3. 

At the end of this section, we mention that, throughout our study, the two critical lines $\mathscr L_1$ and $\mathscr L_2$ will play crucial roles as well. 

Now we briefly introduce the structure of this paper. Section 2 will serve as the background of this paper, where we introduce necessary knowledge and notations, including the p.c.f. self-similar sets, the Dirichlet forms and Laplacians on fractals, and the definitions of function spaces we consider here. In Section 3, we will discuss the critical curve $\mathscr{C}$ and provide several examples. This will help readers to understand the sharp region in the main theorem. In Section 4, we focus on the Lipschitz-Besov spaces $\Lambda^{p,q}_\sigma(K)$. We will provide two kinds of discrete type characterizations  of $\Lambda^{p,q}_\sigma(K)$, which will serve as a main tool towards the main theorem. In Section 5 and 6, we prove the main theorem, Theorem \ref{thm11}. In particular, we will show that $\Lambda^{p,q}_\sigma(K)\subset B^{p,q}_\sigma(K)$ for any $1<p<\infty$, $1\leq q\leq \infty$ and $0<\sigma<2$ in Section 5. In Section 6, we will prove the other direction, i.e. $B^{p,q}_\sigma(K)\subset \Lambda^{p,q}_\sigma(K)$ with $1<p<\infty$, $1\leq q\leq \infty$ and $0<\sigma<\mathscr{C}(p)$. 

Throughout the paper, we will always write $f\lesssim g$ if there is a constant $C>0$ such that $f\leq C g$ when we do not emphasize the constant $C$. In addition, we write $f\asymp g$ if both $f\lesssim g$ and $g\lesssim f$ hold.

\section{Preliminary}
The analysis on p.c.f. self-similar sets was originally developed by Kigami in \cite{ki2,ki3}. For convenience of readers, in this section,  first we will briefly recall the constructions of Dirichlet forms and Laplacians on p.c.f.  fractals. We refer to books \cite{ki3,s3} for details. Then we will provide the definitions of the two classes of Besov spaces, $B^{p,q}_\sigma(K)$ and $\Lambda^{p,q}_\sigma(K)$. There is a large literature on function spaces on fractals or on more general metric measure spaces, see  \cite{BV1,BV2,BV3,cq1,cq2,cq3,GKS,Gr,HKM,s1} and the references therein.

Let $\{F_i\}_{i=1}^N$ be a finite collection of contractions on a complete metric space $(\mathcal M,d)$. The self-similar set associated with the \textit{iterated function system (i.f.s.)} $\{F_i\}_{i=1}^N$ is the unique compact set $K\subset \mathcal M$ satisfying
\[K=\bigcup_{i=1}^N F_iK.\]
For $m\geq 1$, we define $W_m=\{1,\cdots,N\}^m$ the collection of \textit{words} of length $m$, and for each $w\in W_m$, denote
\[F_w=F_{w_1}\circ F_{w_2}\circ\cdots \circ F_{w_m}.\]
Set $W_0=\emptyset$, and let $W_*=\bigcup_{m\geq 0} W_m$ be the collection of all finite words. For $w=w_1w_2\cdots w_m\in W_*\setminus W_0$, we write $w^*=w_1w_2\cdots w_{m-1}$ by deleting the last letter of $w$.

Define the shift space $\Sigma=\{1,2,\cdots,N\}^{\mathbb{N}}$. There is a continuous surjection $\pi: \Sigma\rightarrow K$ defined by
\[\pi(\omega)=\bigcap_{m\geq 1}F_{[\omega]_m}K,\]
where for $\omega=\omega_1\omega_2\cdots$ in $\Sigma$ we write $[\omega]_m=\omega_1\omega_2\cdots \omega_m\in W_m$ for each $m\geq 1$. Let
\[C_K=\bigcup_{i\neq j}F_iK\cap F_jK,\quad \mathcal{C}=\pi^{-1}(C_K),\quad \mathcal{P}=\bigcup_{n\geq 1}\sigma^n \mathcal{C},\]
where $\sigma$ is the shift map define as $\sigma(\omega_1\omega_2\cdots)=\omega_2\omega_3\cdots$. $\mathcal{P}$ is called the \textit{post-critical set}. Call $K$  a \textit{post-critically finite (p.c.f.) self-similar set} if $\#\mathcal{P}<\infty$. In what follows, we always assume that $K$ is a connected p.c.f. self-similar set.

Let $V_0=\pi(\mathcal{P})$ and call it the \textit{boundary} of $K$. For $m\geq 1$, we always have $F_w K\cap F_{w'}K\subset F_w V_0\cap F_{w'}V_0$ for any $w\neq w'\in W_m$. Denote $V_m=\bigcup_{w\in W_m}F_wV_0$ and let $l(V_m)=\{f: f \text{ maps } V_m \text{ into } \mathbb{C}\}$. Write $V_*=\bigcup_{m\geq 0}V_m$.

Let $H=(H_{pq})_{p,q\in V_0}$ be a symmetric linear operator(matrix). $H$ is called a \textit{(discrete) Laplacian} on $V_0$ if  $H$ is non-positive definite; $Hu=0$ if and only if $u$ is constant on $V_0$; and $H_{pq}\geq 0$ for any $p\neq q\in V_0$.
Given a Laplacian $H$ on $V_0$ and a vector $\bm{r}=\{r_i\}_{i=1}^N$ with $r_i>0$, $1\leq i\leq N$, define the \textit{(discrete) energy form} on $V_0$ by
$$\mathcal{E}_0(f,g)=-(f,Hg),\quad \forall f,g\in l(V_0),$$
and inductively {{on $V_m$ by}}
$$\mathcal{E}_m(f,g)=\sum_{i=1}^Nr^{-1}_i\mathcal{E}_{m-1}(f\circ F_i, g\circ F_i), \quad \forall f,g\in l(V_m),$$
for $m\geq 1$. Write $\mathcal{E}_m(f,f)=\mathcal{E}_m(f)$ for short.

Say $(H,\bm{r})$ is a \textit{harmonic structure} if for any $f\in l(V_0)$,
\[\mathcal{E}_0(f)=\min\{\mathcal{E}_1(g):g\in l(V_1),g|_{V_{0}}=f\}.\]
In this paper, we will always assume that there exists a harmonic structure associated with $K$, and in addition, $0<r_i<1$ for all $1\leq i\leq N$. Call $(H,\bm{r})$ a \textit{regular harmonic structure} on $K$.

Now for each $f\in C(K)$, the sequence $\{\mathcal{E}_m(f)\}_{m\geq 0}$ is nondecreasing. Let
\begin{equation*}
\mathcal{E}(f,g)=\lim_{m\to\infty} \mathcal{E}_m(f,g) \text{ and }
dom\mathcal{E}=\big\{f\in C(K):\mathcal{E}(f)<\infty\big\},
\end{equation*}
where $f,g\in C(K)$ and  we write $\mathcal{E}(f):=\mathcal{E}(f,f)$ for short. Call $\mathcal{E}(f)$ the \textit{energy} of $f$.
It is known that $(\mathcal{E},dom\mathcal{E})$ turns out to be a local regular Dirichlet form on $L^2(K,\mu)$ for any Radon measure $\mu$ on $K$. 

An important feature of the form $(\mathcal{E},dom\mathcal{E})$ is the \textit{self-similar identity},
\begin{equation}\label{eq21}
\mathcal{E}(f,g)=\sum_{i=1}^Nr_i^{-1}\mathcal{E}(f\circ F_i, g\circ F_i), \quad \forall f,g\in dom\mathcal{E}.
\end{equation}
Furthermore, denote $r_w=r_{w_1}r_{w_2}\cdots r_{w_m}$ for each $w\in W_m, m\geq 0$. Then for  $m\geq 1$, we have
\begin{equation*}
\mathcal{E}_m(f,g)=\sum_{w\in W_m} r_w^{-1}\mathcal{E}_0(f\circ F_w, g\circ F_w),\quad \mathcal{E}(f,g)=\sum_{w\in W_m} r_w^{-1}\mathcal{E}(f\circ F_w, g\circ F_w).
\end{equation*}

\subsection{The Laplacian and harmonic functions}
To study the Besov spaces on $K$, we need a suitable metric and a comparable measure. Instead of the original metric $d$, a natural choice of metric is the effective resistance metric $R(\cdot,\cdot)$ \cite{ki3}, which matches the form $(\mathcal{E},dom\mcE)$.

\begin{definition}\label{def21}
	For $x,y\in K$, the {\em effective resistance metric} $R(x,y)$ between $x$ and $y$ is defined by
	\[R(x,y)^{-1}=\min\big\{\mathcal{E}(f):f\in dom\mcE,f(x)=0,f(y)=1\big\}.\]
\end{definition}

It is known that $R$ is indeed a metric on $K$ which is topologically equivalent to the metric $d$, and for each $w\in W_*$, we always have $diam(F_wK)\asymp r_w$, where $diam(F_wK)=\max\big\{R(x,y):x,y\in F_wK\big\}$. For convenience, we normalize $diam K$ to be $1$ and so that we additionally have $diam (F_wK)\leq r_w$, $\forall w\in W_*$. For $x\in K$ and $t>0$, we will use $B_t(x)$ to denote a ball centered at $x$ with radius $t$ in the sense of metric $R$. 

We will always choose the following self-similar measure $\mu$ on $K$.

\begin{definition}\label{def22}
	Let $\mu$ be the unique self-similar measure on $K$ satisfying
	\[\mu=\sum_{i=1}^N r_i^{d_H}\mu\circ F_i^{-1},\]
	and $\mu(K)=1$, where $d_H$ is determined by the equation
	$\sum_{i=1}^N r_i^{d_H}=1.$
\end{definition}

Clearly, we have $\mu(F_wK)=\mu_w:=\mu_{w_1}\mu_{w_2}\cdots\mu_{w_m}$ for any $m\geq 0,w\in W_m$. In addition, it is well-known that 
\[C^{-1}t^{d_H}\leq \mu\big(B_t(x)\big)\leq Ct^{d_H},\]
with some constant $C$ independent of $x,t$. 

With the Dirichlet form $(\mathcal{E},dom\mathcal{E})$ and the self-similar measure $\mu$, we can define the associated Laplacian on $K$ with the weak formula. 

\begin{definition}\label{def23}
	(a). Let $dom_0\mathcal{E}=\{\varphi\in dom\mathcal{E}:\varphi|_{V_0}=0\}$. For $f\in dom\mathcal{E}$, say $\Delta f=u$ if
	\[\mathcal{E}(f,\varphi)=-\int_K u\varphi d\mu , \quad\forall \varphi\in dom_0\mathcal{E}.\]
	
	(b). In addition, say $\Delta_N f=u$ if
	\[\mathcal{E}(f,\varphi)=-\int_K u\varphi d\mu, \quad\forall \varphi\in dom\mathcal{E}.\]
\end{definition}

Although, we will focus on Besov spaces (and Sobolev spaces) with Neumann boundary condition in this paper, it is convenient to consider $\Delta$ instead of $\Delta_N$ in the proof, to enlarge the domain a little bit. 

\begin{definition}
Define $\mathcal{H}_0=\{h\in dom\mathcal{E}:\Delta h=0\}$, and call $h\in \mathcal{H}_0$ a \em{harmonic function}.
\end{definition}

In fact, $\mathcal{H}_0$ is a finite dimensional space, and each $h\in \mathcal{H}_0$ is uniquely determined by its boundary value on $V_0$. 
In particular, we can see that $\mathcal{H}_0$ is always in the $L^p$ domain of $\Delta$ for any $1<p<\infty$.

\subsection{Besov spaces on $K$}
In this paper, we consider the (heat) Besov spaces $B^{p,q}_\sigma(K)$ with the Neumann boundary condition. Recall that $P_t=e^{\Delta_Nt},t>0$ is a heat operator associated with $\Delta_N$, and the Bessel potential can be defined as  $(1-\Delta_N)^{-\sigma/2}=\Gamma(\sigma/2)^{-1}\int_0^\infty t^{\sigma/2-1}e^{-t}P_tdt$. We define potential spaces on $K$ as follows, following \cite{HZ} and \cite{s1}.

\begin{definition}
(a). For $1<p<\infty$, $\sigma\geq 0$, define the {\em Sobolev space} $$H^p_\sigma(K)=(1-\Delta_N)^{-\sigma/2}L^p(K),$$ with norm $\|f\|_{H^p_\sigma(K)}=\big\|(1-\Delta_N)^{\sigma/2}f\big\|_{L^p(K)}$.

(b). For $1<p<\infty$, $1\leq q\leq\infty$ and $\sigma>0$, define the {\em heat Besov space} 	 
\[B^{p,q}_\sigma(K)=\Big\{f\in L^p(K):\Big(\int_{0}^\infty \big(t^{-\sigma/2}\big\|(t\Delta_N)^kP_tf\big\|_{L^p(K)}\big)^{q}dt/t\Big)^{1/q}<\infty\Big\},\]
with $k\in\mathbb{N}\cap({\sigma}/{2},\infty)$, and norm $\|f\|_{B^{p,q}_\sigma(K)}=\|f\|_{L^p(K)}+\big(\int_{0}^\infty (t^{-\sigma/2}\|(t\Delta_N)^kP_tf\|_{L^p(K)})^{q}dt/t\big)^{1/q}$. We take the usual modification when $q=\infty$.
\end{definition}

Note that the above definition is independent of $k$, since different choices of $k$ will provide equivalent norms, see \cite{GL} for example. The heat Besov spaces are related with Sobolev spaces by real interpolation. See book \cite{sectorial} for a proof, noticing that $\Delta_N$ is a sectorial operator. See also books \cite{interpolation,tribel} for the real interpolation methods.

\begin{lemma}\label{lemma26}
Let $\sigma_1>0$, $1<p<\infty$ and $1\leq q\leq\infty$. For $0<\theta<1$ and $\sigma_\theta=\theta\sigma_1$, we have 
\[\big(L^p(K),H^p_\sigma(K)\big)_{\theta,q}=B^{p,q}_{\sigma_\theta}(K).\]
\end{lemma}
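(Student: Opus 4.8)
The plan is to realize the real interpolation space $(L^p(K),H^p_{\sigma_1}(K))_{\theta,q}$ as an instance of the abstract interpolation theory for the sectorial operator $-\Delta_N$, and then to identify the resulting semigroup seminorm with the defining seminorm of $B^{p,q}_{\sigma_\theta}(K)$. Since $\Delta_N$ is self-adjoint and non-positive on $L^2(K,\mu)$ and $\{P_t\}_{t\ge 0}$ is a symmetric sub-Markovian semigroup, the semigroup $P_t=e^{t\Delta_N}$ extends to a bounded analytic $C_0$-semigroup on $L^p(K)$ for every $1<p<\infty$; equivalently $-\Delta_N$ is a non-negative sectorial operator with $P_t=e^{-t(-\Delta_N)}$, and (writing $B=1-\Delta_N$) $H^p_{\sigma_1}(K)=D(B^{\sigma_1/2})$ with the graph norm. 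This is exactly the setting of \cite{sectorial}.

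The first step is the semigroup characterization of real interpolation between $L^p(K)$ and the domain of an integer power of $-\Delta_N$. Fix an integer $k>\sigma_\theta/2$. For a non-negative sectorial operator with bounded analytic semigroup, the classical description gives, for $0<\alpha<k$,
\[(L^p(K),D((-\Delta_N)^k))_{\alpha/k,q}=\Big\{f\in L^p(K):\Big(\int_0^\infty\big(t^{k-\alpha}\big\|(-\Delta_N)^ke^{t\Delta_N}f\big\|_{L^p(K)}\big)^q\frac{dt}{t}\Big)^{1/q}<\infty\Big\},\]
with equivalent norms. As $(t\Delta_N)^kP_t=(-1)^k t^k(-\Delta_N)^ke^{t\Delta_N}$, the choice $\alpha=\sigma_\theta/2$ makes the right-hand side exactly the seminorm defining $B^{p,q}_{\sigma_\theta}(K)$; hence $(L^p(K),D((-\Delta_N)^k))_{\sigma_\theta/(2k),q}=B^{p,q}_{\sigma_\theta}(K)$, and the $k$-independence noted after the definition of $B^{p,q}_\sigma(K)$ is just the $k$-independence of the interpolation scale.

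The second step connects the fractional-power domain $H^p_{\sigma_1}(K)=D(B^{\sigma_1/2})$ to this integer-power scale. The point is that $D(B^{\sigma_1/2})$ is not itself an interpolation space, so one cannot reiterate two interpolation functors directly; instead one invokes the Komatsu sandwich (see \cite{sectorial,tribel}): for $0<s<k$,
\[(L^p(K),D(B^k))_{s/k,1}\hookrightarrow D(B^s)\hookrightarrow (L^p(K),D(B^k))_{s/k,\infty},\]
so that $D(B^s)$ is an intermediate space of class $\mathcal{J}(s/k)$ and $\mathcal{K}(s/k)$ between $L^p(K)$ and $D(B^k)$. Taking $s=\sigma_1/2$, the reiteration theorem for spaces of these classes yields $(L^p(K),H^p_{\sigma_1}(K))_{\theta,q}=(L^p(K),D(B^k))_{\theta\sigma_1/(2k),q}$. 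Since $B=1-\Delta_N$ and $-\Delta_N$ differ by a bounded invertible shift, they generate the same power-domain scale with equivalent norms, so this coincides with the space from the first step for $\alpha=\theta\sigma_1/2=\sigma_\theta/2$. Combining the two steps gives $(L^p(K),H^p_{\sigma_1}(K))_{\theta,q}=B^{p,q}_{\sigma_\theta}(K)$ with equivalent norms.

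The main obstacle is the careful bookkeeping in applying the abstract machinery rather than any single estimate. Two points deserve attention: first, verifying that $-\Delta_N$ generates a bounded analytic semigroup on $L^p(K)$ for all $1<p<\infty$, which rests on self-adjointness together with the analyticity of sub-Markovian semigroups on $L^p$; and second, the passage from the fractional-power domain $D(B^{\sigma_1/2})$ to the interpolation scale, where it is the sandwich-plus-reiteration argument, not a naive reiteration, that legitimizes the identification. One should also note that $-\Delta_N$ is not invertible (constants lie in its kernel), so $e^{t\Delta_N}$ is bounded analytic but not exponentially stable; the integral at $t=\infty$ nonetheless converges thanks to the spectral gap on the mean-zero subspace, and the seminorm correctly vanishes on constants. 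The endpoint $q=\infty$ and the shift between $-\Delta_N$ and $1-\Delta_N$ require only the usual minor modifications.
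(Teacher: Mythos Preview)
Your proposal is correct and follows exactly the route the paper indicates: the paper does not give an independent proof of this lemma but simply cites Haase's book \cite{sectorial}, remarking that $\Delta_N$ is sectorial, and your argument is a faithful unpacking of the standard semigroup/sectorial-operator machinery (bounded analytic extension to $L^p$, Komatsu sandwich, reiteration) that underlies that citation. There is nothing to compare beyond noting that you have made explicit what the paper leaves to the reference.
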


In application, we will set $\sigma_1=2$ in the above lemma, where $H^p_2(K)=dom_{L^p(K)}\Delta_N:=\big\{f\in L^p(K):\Delta_Nf\in L^p(K)\big\}$. See Section 5 and 6 for details. \vspace{0.2cm}

Another class of function spaces that will be studied is the Lipschitz-Besov spaces, whose definition does not rely on the Laplacian.
\begin{definition}
Let $1\leq p<\infty$, $t>0$ and $f$ be a measurable function on $K$, we define 
\[I_p(f,t)=\big(\int_Kt^{-d_H}\int_{B_t(x)}|f(x)-f(y)|^pd\mu(y)d\mu(x)\big)^{1/p}.\]
In addition, we define $I_\infty(f,t)=\sup\big\{|f(x)-f(y)|: x,y\in K, R(x,y)< t\big\}$.
\end{definition}

The \textit{Lipschitz-Besov spaces}, denote by $\Lambda^{p,q}_\sigma(K)$, are defined as follows.

\begin{definition}\label{def24}
	For $\sigma>0$ and $1\leq p,q\leq \infty$, we define 
	\[\Lambda^{p,q}_\sigma(K)=\big\{f\in L^p(K): t^{-\sigma d_W/2} I_p(f,t)\in L^q_*(0,1]\big\},\]
	with norm 
	\[\|f\|_{\Lambda^{p,q}_\sigma(K)}:=\|f\|_{L^p(K)}+\big\| t^{-\sigma d_W/2} I_p(f,t)\big\|_{L^q_*(0,1]},\]
	where $\|f\|_{L^q_*(0,1]}=\big(\int_0^1|f(t)|^q\frac{dt}{t}\big)^{1/q}$ and we take the usual modification when $q=\infty$.
\end{definition}

\noindent\textbf{Remark.} Since $K$ is bounded, we can replace the integral of $t$ over $(0,1]$ with $(0,\infty)$ in the above definition.

\section{A critical curve}
In this section, we  introduce a \textit{critical curve} $\mathscr{C}$ in the $(\frac{1}{p},\sigma)$-parameter plane as follows. 

\begin{definition}\label{def31}
For $1\leq p\leq \infty$, we define $\mathscr{C}(p)=\sup\big\{\sigma>0:\mathcal{H}_0\subset \Lambda^{p,\infty}_\sigma(K)\big\}$.
\end{definition}  

The critical curve $\mathscr{C}$ is sharp, since for any $\sigma>\mathscr{C}(p)$, we have locally harmonic functions contained in $B^{p,q}_\sigma(K)\setminus \Lambda^{p,q}_{\sigma}(K)$.

\subsection{Two regions}
In this part, we provide some qualitative behavior of the critical curve $\mathscr C$. We begin with the following easy observation.  

\begin{proposition}\label{prop33}
(a). The critical curve $\mathscr C$ is concave and increasing with respect to the parameter $\frac{1}{p}$. In addition, $\mathscr C(\infty)=\frac{2}{d_W}$ and $\mathscr C(2)=1$. 

(b). For $1\leq p\leq 2$, we have $1\leq \mathscr{C}(p)\leq 1+(\frac{2}{p}-1)(d_S-1)$.

(c). For $2\leq p\leq \infty$, we have $1+(\frac{2}{p}-1)(d_S-1)\leq \mathscr{C}(p)\leq 1\wedge (\frac{2}{d_W}+\frac{d_S}{p})$.

\noindent See Figure \ref{criticalcurve} for an illustration.
\end{proposition}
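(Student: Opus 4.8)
The plan is to translate the membership $\mathcal H_0\subset\Lambda^{p,\infty}_\sigma(K)$ into a decay condition on the oscillations of harmonic functions over cells. For $x$ in a cell $F_wK$ of the partition $\Lambda_t=\{w:r_w\le t<r_{w^*}\}$ (so that $r_w\asymp t$), the ball $B_t(x)$ meets only boundedly many cells of $\Lambda_t$ by finite ramification; combining this with the $d_H$-regularity $\mu(B_t(x))\asymp t^{d_H}$ one obtains, for $1\le p<\infty$,
\[
I_p(h,t)^p\asymp t^{d_H}\sum_{w\in\Lambda_t}\big(\mathrm{osc}_{F_wK}h\big)^p,\qquad \mathrm{osc}_{F_wK}h:=\max_{x,y\in F_wK}|h(x)-h(y)|,
\]
and $I_\infty(h,t)\asymp\max_{w\in\Lambda_t}\mathrm{osc}_{F_wK}h$. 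Thus $h\in\Lambda^{p,\infty}_\sigma(K)$ holds exactly when $\sum_{w\in\Lambda_t}(\mathrm{osc}_{F_wK}h)^p\lesssim t^{\sigma d_Wp/2-d_H}$, and the whole proposition reduces to estimating how fast harmonic oscillations decay along the cell structure.

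For part (a), I would read $I_p(h,t)^p=\int_{K\times K}|h(x)-h(y)|^p\,d\nu_t$ as an exact $L^p$-norm against the measure $d\nu_t=t^{-d_H}\mathbf 1_{\{R(x,y)<t\}}\,d\mu(x)d\mu(y)$. Hölder's inequality (log-convexity of $L^p(\nu_t)$-norms) gives $I_{p_\theta}(h,t)\le I_{p_0}(h,t)^{1-\theta}I_{p_1}(h,t)^\theta$ whenever $1/p_\theta=(1-\theta)/p_0+\theta/p_1$; feeding this into the definition of $\mathscr C$ yields concavity in $1/p$. Monotonicity follows from the averaged form $I_{p_0}(h,t)\lesssim I_{p_1}(h,t)$ for $p_0\le p_1$ (Jensen, using $\mu(K)=1$), whence $\Lambda^{p_1,\infty}_\sigma\subset\Lambda^{p_0,\infty}_\sigma$ and $\mathscr C$ increases in $1/p$. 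For the endpoints, $\mathscr C(2)=1$ is the $p=2$ theory ($\mathcal H_0\subset dom\mathcal E\subset\Lambda^{2,\infty}_1(K)$, while $\Lambda^{2,\infty}_\sigma(K)$ collapses to constants for $\sigma>1$), and $\mathscr C(\infty)=\tfrac2{d_W}$ follows from the Lipschitz continuity of harmonic functions in $R$, i.e. $\mathrm{osc}_{F_wK}h\lesssim r_w$, which gives $I_\infty(h,t)\lesssim t$ and hence $\mathscr C(\infty)\ge\tfrac2{d_W}$; the reverse inequality is the $p=\infty$ case of the bound proved below.

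Parts (b) and (c) I would then obtain almost formally from (a). Using $2-d_S=\tfrac2{d_W}$ one checks that the line $\ell(p)=1+(\tfrac2p-1)(d_S-1)=\tfrac2{d_W}+\tfrac2p\cdot\tfrac{d_H-1}{d_W}$ is precisely the chord of $\mathscr C$ through $(\tfrac12,1)$ and $(0,\tfrac2{d_W})$. Concavity then places $\mathscr C$ above $\ell$ on $[2,\infty]$ (the lower bound in (c)) and below the extension of $\ell$ on $[1,2]$ (the upper bound in (b)); monotonicity together with $\mathscr C(2)=1$ gives $\mathscr C\ge1$ on $[1,2]$ and $\mathscr C\le1$ on $[2,\infty]$. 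The remaining bound $\mathscr C(p)\le\tfrac2{d_W}+\tfrac{d_S}p$ I would prove by exhibiting a single bad cell: choosing $h\in\mathcal H_0$ with nonzero normal derivative at the fixed point $q_{i_0}=F_{i_0}(q_{i_0})\in V_0$ forces $\mathrm{osc}_{F_{i_0^m}K}h\asymp r_{i_0}^m\asymp t$ for $t\asymp r_{i_0}^m$, so that $\sum_{w\in\Lambda_t}(\mathrm{osc}_{F_wK}h)^p\gtrsim t^p$; comparing with $t^{\sigma d_Wp/2-d_H}$ shows membership fails once $\sigma>\tfrac2{d_W}+\tfrac{d_S}p$.

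The hard part is the sharp two-sided control of harmonic oscillations, and specifically the Lipschitz upper bound $\mathrm{osc}_{F_wK}h\lesssim r_w$ behind $\mathscr C(\infty)\ge\tfrac2{d_W}$. The generic resistance estimate $|h(x)-h(y)|^2\le R(x,y)\,\mathcal E(h)$, combined with the energy contraction $\mathcal E_0(A_wg)\le r_w\mathcal E_0(g)$, only yields the $\tfrac12$-Hölder bound $\mathrm{osc}_{F_wK}h\lesssim r_w^{1/2}$. Upgrading the exponent from $\tfrac12$ to $1$ requires the finer structure of the (row-stochastic) harmonic extension matrices $A_w=A_{w_1}\cdots A_{w_m}$, namely that they contract at rate $r_w$ on the space of nonconstant boundary data; this is the one place where the self-similar harmonic structure must be used in an essential way, whereas the normal-derivative lower bound and the interpolation arguments are routine.
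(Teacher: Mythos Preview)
Your proposal is correct and follows essentially the same route as the paper: concavity and monotonicity via H\"older/log-convexity of $I_p(h,t)$, the endpoints $\mathscr C(2)=1$ and $\mathscr C(\infty)=\tfrac{2}{d_W}$ from known facts, and parts (b), (c) as formal consequences of (a). Two small differences are worth noting: for the bound $\mathscr C(p)\le \tfrac{2}{d_W}+\tfrac{d_S}{p}$ the paper picks the eigenvector harmonic function $h\circ F_1=r_1 h$ (so that the lower estimate on $I_p(h,r_1^n)$ is self-contained via a direct change of variables on $F_1^nK$), whereas you use the nonzero normal derivative, which also works but implicitly invokes the same Lipschitz structure; and for $\mathscr C(\infty)=\tfrac{2}{d_W}$ the paper simply cites Teplyaev's result that $0<\sup_{x\neq y}|h(x)-h(y)|/R(x,y)<\infty$ rather than discussing the contraction of the $A_w$ as you (correctly) identify as the substantive point.
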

\begin{figure}[h]
	\includegraphics[width=5cm]{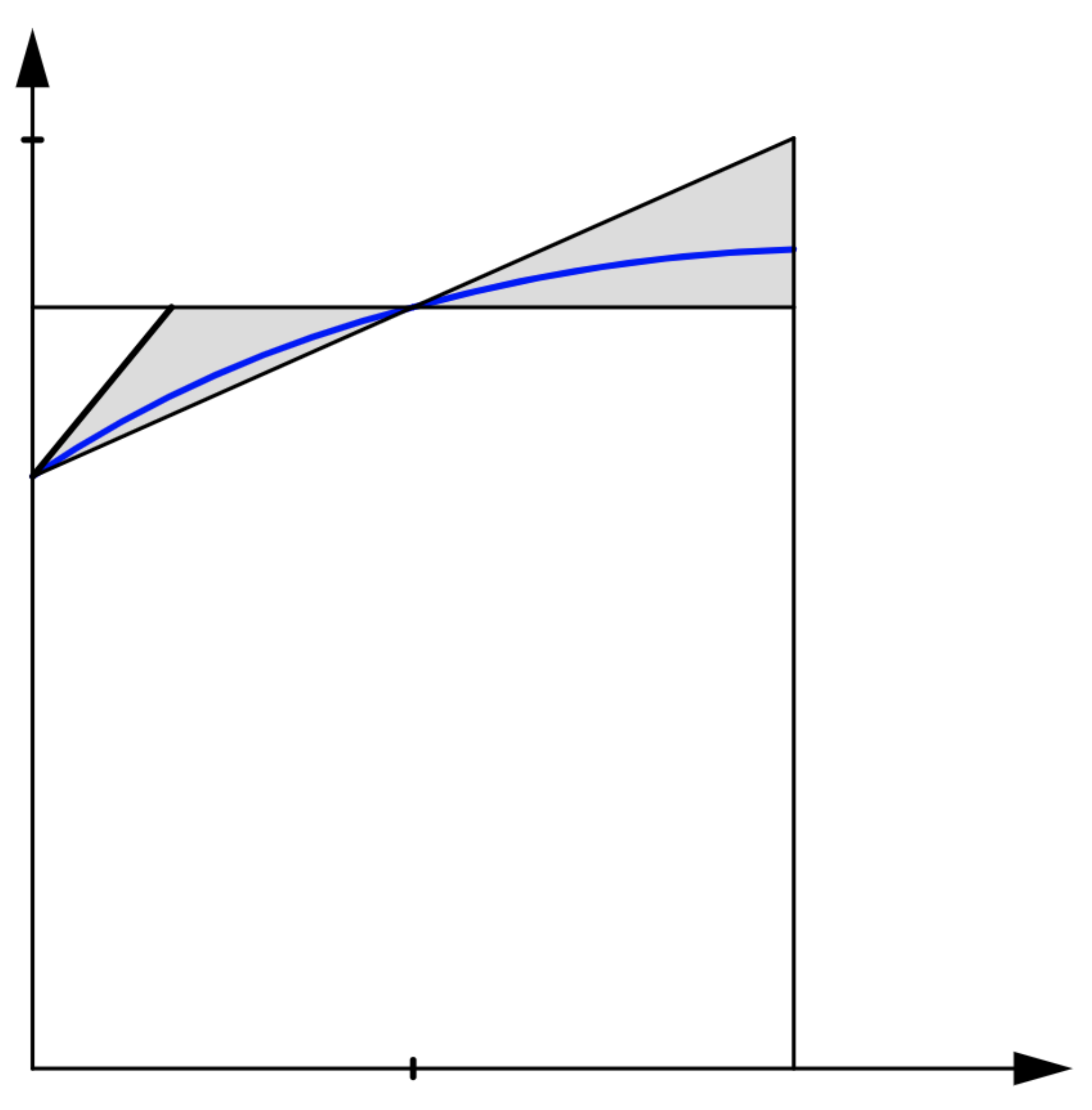}
	\begin{picture}(0,0)
	\put(-150,135){$\sigma$}
	\put(-151,101){$1$}
	\put(-155,122){$d_S$}
	\put(-157,78){$\frac{2}{d_W}$}
	\put(-26,14){$\frac 1p$}\put(-40,108){$\mathscr C$}
	\put(-45,-6){$1$}\put(-95,-8){$\frac{1}{2}$}
	\end{picture}
	\caption{The critical curve $\mathscr C$ in the $(\frac{1}{p},\sigma)$-parameter plane.}
	\label{criticalcurve}
\end{figure}
\begin{proof}
Recall that $d_W=1+d_H$, $d_S=\frac{2d_H}{d_W}$, and note that $d_S-1=1-\frac{2}{d_W}$. 

(a). The observation that $\mathscr{C}(\infty)=\frac{2}{d_W}$ follows from the fact that $0<\sup_{x\neq y}\frac{|h(x)-h(y)|}{R(x,y)}<\infty$ for any non-constant harmonic function $h$, see \cite{T1}.  For $p=2$, it is well known that $\Lambda^{2,\infty}_1(K)=dom\mathcal{E}$ and $\Lambda^{2,\infty}_\sigma(K)=constants$ provided $\sigma>1$ (\cite{GHL}), which gives $\mathscr{C}(2)=1$. 
	
Next, let $1\leq p_1<p_2\leq\infty$, $\sigma_1<\mathscr{C}(p_1)$ and $\sigma_2<\mathscr{C}(p_2)$. Also, let $\frac{1}{p}=\frac{1}{2p_1}+\frac{1}{2p_2}$, and $\sigma=\frac{\sigma_1+\sigma_2}{2}$. Then for any $0<t\leq 1$ and $h\in \mathcal{H}_0$, it holds that 
\[\begin{aligned}&\big(\int_Kt^{-d_H}\int_{B_t(x)}|h(x)-h(y)|^pd\mu(y)d\mu(x)\big)^{1/p}\\&\leq \big(\int_Kt^{-d_H}\int_{B_t(x)}\sqrt{|h(x)-h(y)|}^{2p_1}d\mu(y)d\mu(x)\big)^{1/2p_1}\cdot \big(\int_Kt^{-d_H}\int_{B_t(x)}\sqrt{|h(x)-h(y)|}^{2p_2}d\mu(y)d\mu(x)\big)^{1/2p_2}\end{aligned}\]
and thus
\[t^{-\sigma d_W/2}I_p(h,t)\leq t^{-\sigma d_W/2}\sqrt{I_{p_1}(h,t)}\sqrt{I_{p_2}(h,t)}\leq \sqrt{\|h\|_{\Lambda^{p_1,\infty}_{\sigma_1}(K)}}\sqrt{\|h\|_{\Lambda^{p_2,\infty}_{\sigma_2}(K)}}.\]
 This implies $\mathcal{H}_0\subset \Lambda^{p,\infty}_\sigma(K)$. Thus, we conclude $\mathscr{C}(p)\geq \frac{1}{2}\big(\mathscr{C}(p_1)+\mathscr{C}(p_2)\big)$. So $\mathscr{C}$ is concave.
 
 Lastly, there is a constant $C>0$ such that $\mu(B_t(x))\leq C t^{d_H}$ for any $x\in K$ and $t\in(0,1]$. Thus, for $1\leq p_1\leq p_2<\infty$ and $0<t\leq 1$, it is easy to see
 \[I_{p_1}(h,t)\leq CI_{p_2}(h,t)\]
 by using the {H\"{o}lder} inequality. This implies that $\mathscr{C}$ is increasing with respect to $\frac1p$. 

 \vspace{0.2cm}

(b).  Part (b) is a consequence of part (a) and the fact that $\mathscr{C}(\infty)=\frac{2}{d_W}$ and $\mathscr{C}(2)=1$. \vspace{0.2cm}

(c). Now by part (a), we can conclude that $1+(\frac{2}{p}-1)(d_S-1)\leq \mathscr{C}(p)\leq 1$. It remains to prove $\mathscr{C}(p)\leq \frac{2}{d_W}+\frac{d_S}{p}$. We choose a non-constant harmonic function $h$ such that $h\circ F_1=r_1h$. For any $n\geq 0$, we  see that 
\[\begin{aligned}
I_p(h,r_1^n)&=\big(\int_K\mu_1^{-n}\int_{B_{r_1^n}(x)}|h(x)-h(y)|^pd\mu(y)d\mu(x)\big)^{1/p}\\
&\geq \big(\int_{F_1^nK}\mu_1^{-n}\int_{B_{r_1^n}(x)}|h(x)-h(y)|^pd\mu(y)d\mu(x)\big)^{1/p}\\
&\geq r_1^n\mu_1^{n/p}\big(\int_{K}\int_{K}|h(x)-h(y)|^pd\mu(y)d\mu(x)\big)^{1/p}.
\end{aligned}\]
This implies that $r_1^{-\mathscr{C}(p)d_W/2}r_1^{1+d_H/p}\leq 1$, and thus $\mathscr{C}(p)\leq \frac{2}{d_W}+\frac{d_S}{p}$.
\end{proof}

\noindent\textbf{Remark.} (a). Proposition \ref{prop33} (c) implies that when $d_H>1$, we can not expect that $B^{p,q}_\sigma(K)=\Lambda^{p,q}_\sigma(K)$ holds for any $1<p<\infty$ and $0<\sigma<1$. 

\noindent(b). For $1<p<2$, it is possible that $B^{p,q}_\sigma(K)=\Lambda^{p,q}_\sigma(K)$ for some $\sigma>1$. See the next subsection for examples with $\mathscr{C}(1)>1$.\vspace{0.2cm}

\begin{figure}[h]
	\includegraphics[width=5cm]{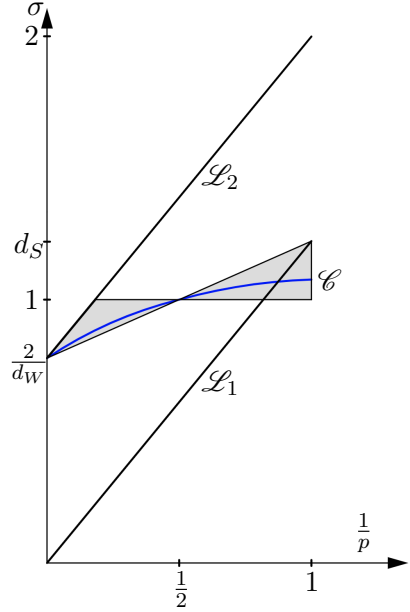}
	\begin{picture}(0,0)
	\put(-150,212){$\sigma$}
	\put(-151,101){$1$}	\put(-151,201){$2$}
	\put(-155,122){$d_S$}
	\put(-157,78){$\frac{2}{d_W}$}
	\put(-26,14){$\frac 1p$}\put(-40,108){$\mathscr C$}\put(-84,70){$\mathscr L_1$}\put(-84,148){$\mathscr L_2$}
	\put(-45,-6){$1$}\put(-95,-8){$\frac{1}{2}$}
	\end{picture}
	\caption{The critical curves $\mathscr L_1$, $\mathscr L_2$ and $\mathscr C$.}
	\label{criticallines}
\end{figure}

There are two more critical lines $\mathscr{L}_1$, $\mathscr{L}_2$ in the $(\frac{1}{p},\sigma)$-parameter plane, that are of interest, with 
$$\mathscr{L}_1(p)=\frac{d_S}{p},\text{ and }\mathscr{L}_2(p)=2-\frac{d_S}{p'},$$
where $p'=\frac{p}{p-1}$.
See Figure \ref{criticallines} for an illustration for the positions of $\mathscr{C},\mathscr{L}_1$ and $\mathscr{L}_2$. In particular, as illustrated in \cite{GHL,HZ,s1}, the Sobolev spaces $H^p_\sigma(K)$ and the heat Besov spaces $B^{p,q}_\sigma(K)$ are embedded in $C(K)$ when the parameter point  $(\frac{1}{p},\sigma)$ is above $\mathscr{L}_1$, and these function spaces with or without Neumann condition coincide if $(\frac{1}{p},\sigma)$ is below $\mathscr{L}_2$ (\cite{cq1,cq2,cq3}), which clearly covers the parameter region below $\mathscr{C}$ by Proposition \ref{prop33}.

In this paper, we are most interested in the region $\sigma<\mathscr{C}(p)$, and we can see that $\mathscr{C}$ and $\mathscr{L}_1$ intersect at some point with $1\leq p\leq d_S$ by Proposition \ref{prop33}. In particular, we divide the region below $\mathscr{C}$ into two parts, see Figure \ref{twoareas} for an illustration. \vspace{0.2cm}

\noindent \textbf{Region 1}. $\mathscr{A}_1:=\big\{(\frac{1}{p},\sigma): 1<p<\infty\text{ and } \mathscr{L}_1(p)<\sigma<\mathscr{C}(p)\big\}$;

\noindent \textbf{Region 2}.  $\mathscr{A}_2:=\big\{(\frac 1 p,\sigma): 1<p<\infty\text{ and }0< \sigma<\mathscr{L}_1(p)\wedge\mathscr{C}(p)\big\}$.

\vspace{0.2cm}

\begin{figure}[h]
	\includegraphics[width=5cm]{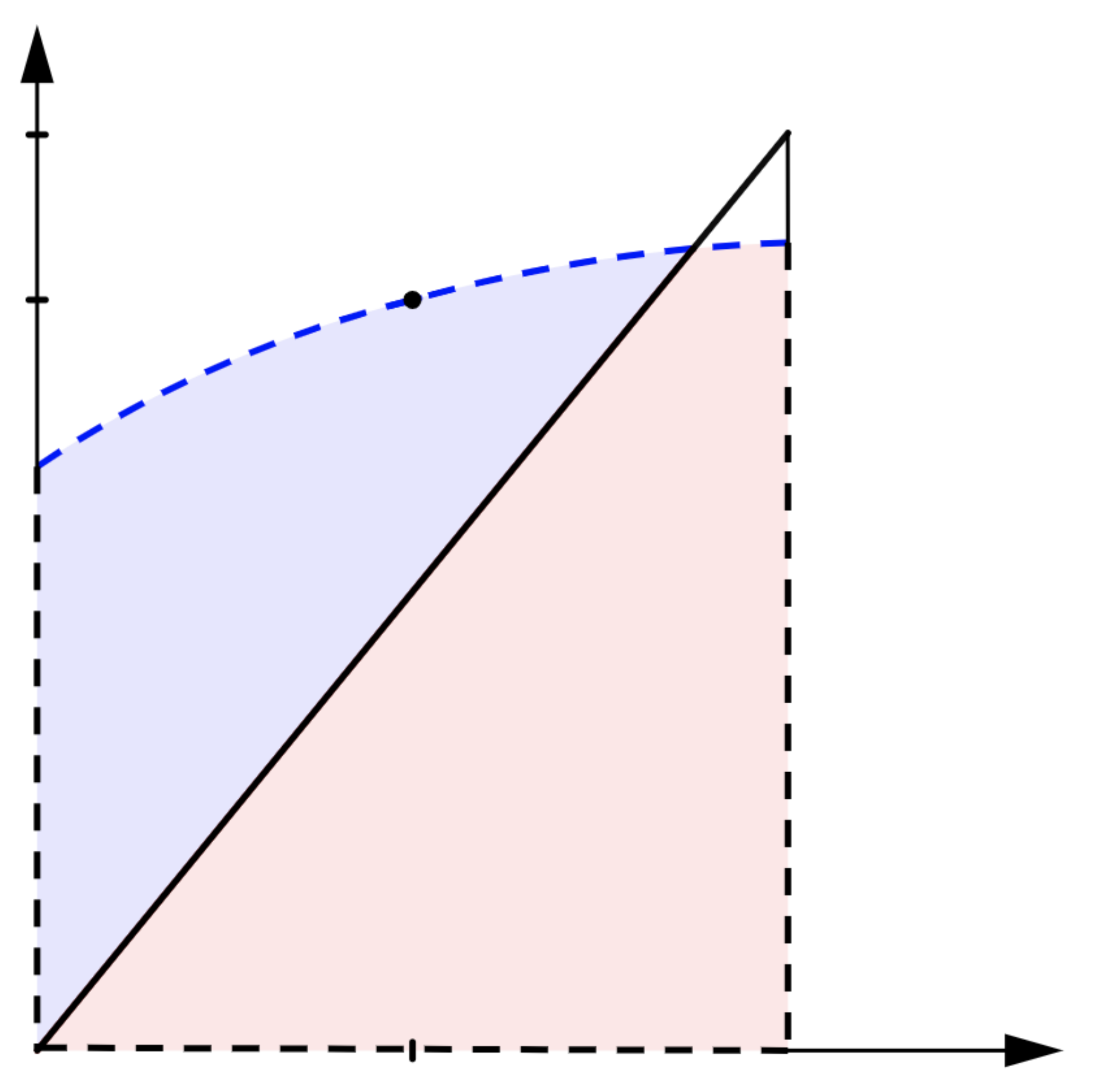}
	\begin{picture}(0,0)
	\put(-152,135){$\sigma$}
	\put(-151,101){$1$}
	\put(-157,122){$d_S$}
	\put(-157,78){$\frac{2}{d_W}$}
	\put(-26,14){$\frac 1p$}\put(-80,40){$\mathscr A_2$}\put(-120,70){$\mathscr A_1$}
	\put(-45,-6){$1$}\put(-95,-8){$\frac{1}{2}$}
	\end{picture}
	\caption{The regions $\mathscr A_1$ and $\mathscr A_2$.}
	\label{twoareas}
\end{figure}

 We will apply different methods when considering these two regions, for the proof of $B^{p,q}_\sigma(K)\subset \Lambda^{p,q}_\sigma(K)$. The border between the two regions can be dealt with by using real interpolation. 

The reason that we need to divide the region $\sigma<\mathscr{C}(p)$ in this manner is due to the existence of the region $\mathscr{C}(p)<\sigma<\mathscr{L}_1(p)$ when $\mathscr{C}(1)<d_S$. For example, this happens for the Sierpinski gasket, see the next subsection.

\subsection{Examples}
In this subsection, we look at some typical p.c.f. self-similar sets, and describe their critical curves $\mathscr C$ or provide some rough estimates.\vspace{0.2cm}

\noindent\textbf{Example 1.} The \textit{unit interval} $I=[0,1]$, generated by $F_1(x)=\frac x 2, F_2(x)=\frac x 2+\frac 1 2$, is a simplest example of p.c.f. self-similar sets. We equip $I$ with the standard Laplacian, then it has walk dimension $d_W=2$ and spectral dimension $d_S=1$. So the critical curve is simply a horizontal line segment, $\mathscr{C}(p)\equiv 1$.\vspace{0.2cm}

\noindent\textbf{Example 2.} A more interesting example is the \textit{Vicsek set} $\mathcal{V}$. Let $\{q_i\}_{i=1}^4$ be the four vertices of a square in $\mathbb{R}^2$, and let $q_5$ be the center of the square. Define an i.f.s. $\{F_i\}_{i=1}^5$ by \[F_i(x)=\frac{1}{3}(x-q_i)+q_i,\text{ for }1\leq i\leq 5.\]
The Vicsek set $\mathcal{V}$ is then the unique compact set in the square such that $\mathcal{V}=\bigcup_{i=1}^5 F_i\mathcal{V}$, see Figure \ref{figexample21}.  

\begin{figure}[htp]
	\includegraphics[width=4cm]{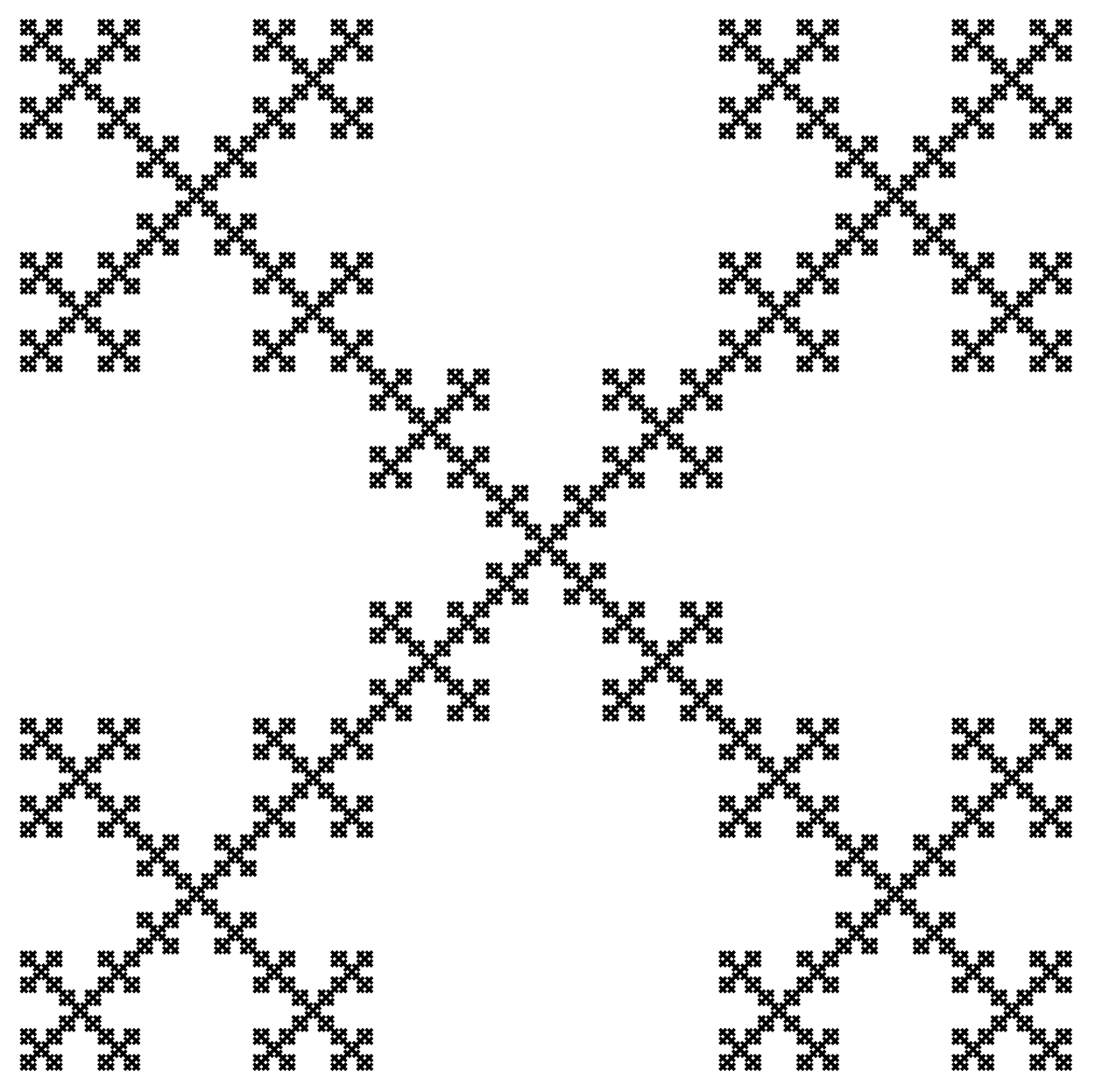}
	\begin{picture}(0,0)
	\put(-125,111){$q_1$}
	\put(-125,1){$q_2$}
	\put(-5,1){$q_3$}
	\put(-5,111){$q_4$}
	\put(-76,55){$q_5$}
	\end{picture}
	\caption{The Vicsek set $\mathcal{V}$.}\label{figexample21}
\end{figure}

We equip $\mathcal V$ with the fully symmetric measure $\mu$ and energy form $(\mathcal E, dom\mathcal E)$. In particular, $\mu$ is chosen to be the normalized Hausdorff measure on $\mathcal{V}$. As for $(\mathcal E, dom\mathcal E)$, recall that it could be defined first on discrete graphs on $V_m$'s then passing to the limit. Note that $V_m=\bigcup_{w\in W_m}F_wV_{0}$, where $V_0=\{q_1,q_2,q_3,q_4\}$ is the boundary of $\mathcal V$. For convenience of the later calculation, we instead to use an equivalent definition of $(\mathcal E, dom\mathcal E)$ by involving the point $q_5$ in the graph energy forms, i.e., letting $\tilde{V}_0=\{q_i\}_{i=1}^5$ and $\tilde{V}_m=\bigcup_{w\in W_m}F_w\tilde{V}_0$, and defining the energy form on $\tilde{V}_0$ to be
\[\tilde{\mathcal{E}}_0(f,g)=\sum_{i=1}^4\big(f(q_i)-f(q_5)\big)\big(g(q_i)-g(q_5)\big),\]
and iteratively $\tilde{\mathcal{E}}_m(f,g)=3\sum_{i=1}^5 \tilde{\mathcal{E}}_{m-1}(f\circ F_i, g\circ F_i)$ on $\tilde{V}_m$, which still approximate  $(\mathcal{E}, dom\mathcal{E})$ on $\mathcal V$. In particular, we have $r=\frac 13$, and in addition,
\[d_H=\frac{\log5}{\log3},\quad d_W=1+d_H=\frac{\log15}{\log3},\quad d_S=\frac{2d_H}{d_W}=\frac{2\log{5}}{\log15}.\]
We will show that $\mathscr C(p)$ is a line segment with slope $2(d_S-1)$, i.e. 
\begin{equation}\label{eqn31}
\mathscr{C}(p)=1+(\frac{2}{p}-1)(d_S-1)=\frac{2\log3}{\log{15}}+\frac{2}{p}\cdot\frac{\log5-\log3}{\log15}.
\end{equation}

\begin{figure}[htp]
	\includegraphics[width=4cm]{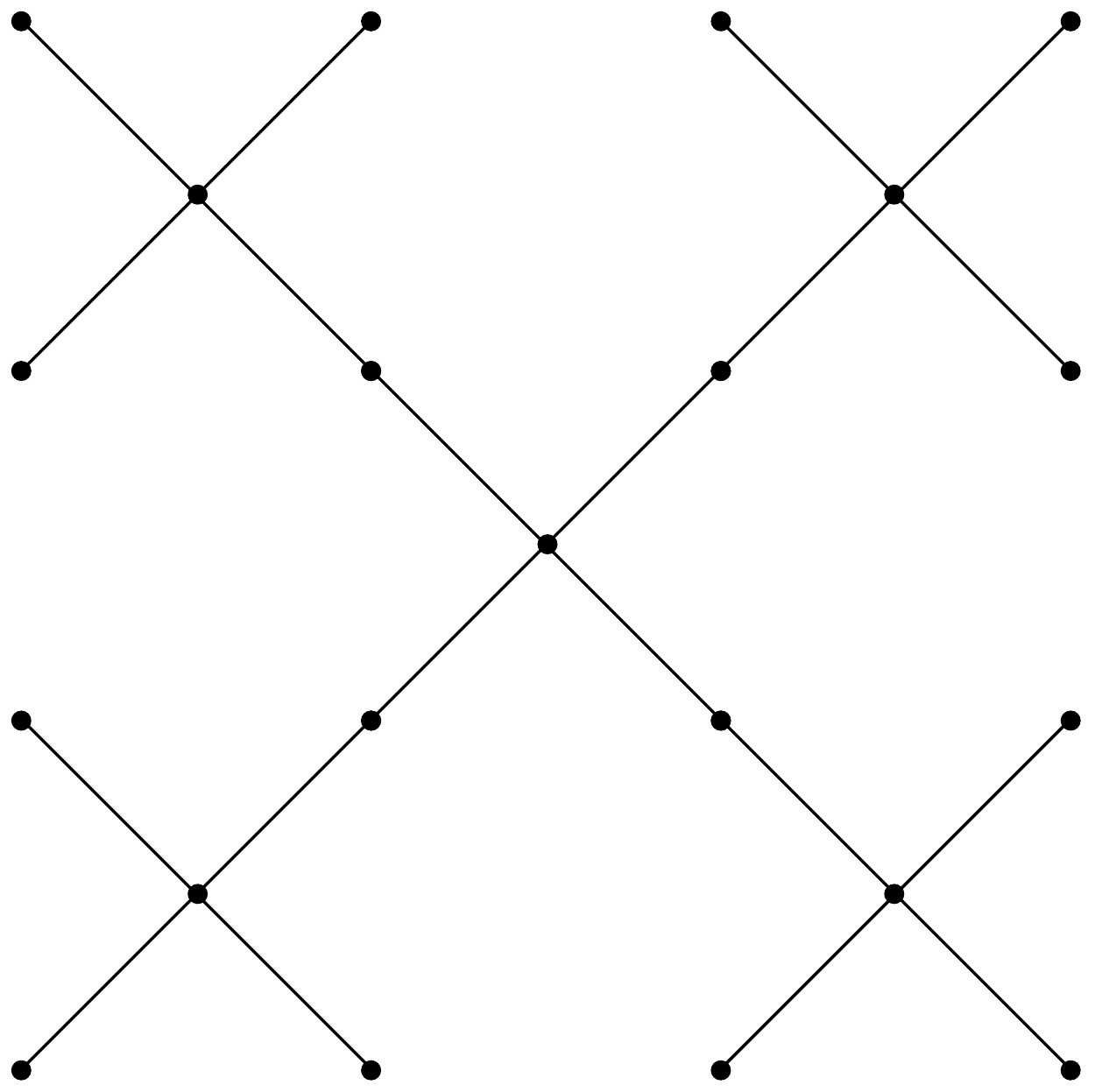}
	\begin{picture}(0,0)
	\put(-122,110){$a$}
	\put(-122,-1){$b$}
	\put(-4,-1){$c$}
	\put(-4,110){$d$}
	\put(-69,55){$e$}
	
	\put(-138,71){$\frac{2a+e}{3}$}
	\put(-116,93){$\frac{2a+e}{3}$}
	\put(-100,111){$\frac{2a+e}{3}$}
	\put(-104,71){$\frac{a+2e}{3}$}
	\end{picture}
	\caption{A harmonic function $h$ on $\mathcal{V}$ with boundary value $h(q_1)=a$, $h(q_2)=b$, $h(q_3)=c$, $h(q_4)=d$, and $e=h(q_5)=(a+b+c+d)/4$.}\label{figexample22}
\end{figure}

In particular, for $h\in \mathcal{H}_0$ and $t\in (0,1]$, we are interested in the estimate of $I_1(h,t)$. We denote by $\sum_{x\sim_m y}\big|h(x)-h(y)\big|$ the sum of absolute differences of $h$ over edges of level $m$, where $x\sim_m y$ means that there exist a word $w\in W_m$ and an $1\leq i\leq 4$ such that $x=F_wq_i$ and $y=F_wq_5$. Since $\mathcal{H}_0$ is of finite dimension, it is not hard to check that 
\[\sum_{x\sim_m y}\big|h(x)-h(y)\big|\asymp 5^{m}I_1(h,3^{-m})=3^{md_H}I_1(h,3^{-m}).\]
On the other hand, due to the harmonic extension algorithm as shown in Figure \ref{figexample22}, we immediately have 
\[\sum_{x\sim_m y}\big|h(x)-h(y)\big|=\sum_{i=1}^4 |f(q_i)-f(q_5)|,\quad \forall m\geq 0.\]
So $\sup_{m\geq 0}3^{m d_Sd_W/2}I_1(h,3^{-m})\lesssim \|h\|_\infty$, which means $h\in \Lambda^{1,\infty}_{d_S}(\mathcal{V})$. Thus $\mathscr{C}(1)=d_S$ by applying Proposition \ref{prop33} (b). This determines the formula of $\mathscr{C}(p)$ in (\ref{eqn31}), using Proposition \ref{prop33} (a).

The above description of $\mathscr C$ is also valid for a general $(2k+1)$-Vicsek set with $k\geq 1$, which is generated by an i.f.s. of $4k+1$ contractions, such that each of the two cross directions of the fractal consists of $2k+1$ sub-cells. We omit the details.\vspace{0.2cm}

\noindent\textbf{Example 3.} The next example is the \textit{Sierpinski gasket} $\SG$. Let $\{q_i\}_{i=0}^3$ be the three vertices of an equilateral triangle in $\mathbb{R}^2$, and define an i.f.s. $\{F_i\}_{i=1}^3$ by 
\[F_i(x)=\frac{1}{2}(x-q_i)+q_i,\text{ for }1\leq i\leq 3.\]
The Sierpinski gasket $\mathcal{SG}$ is the unique compact set in $\mathbb{R}^2$ such that $\SG=\bigcup_{i=1}^3 F_i\SG$, see Figure \ref{figexample31}. 

\begin{figure}[htp]
	\includegraphics[width=4cm]{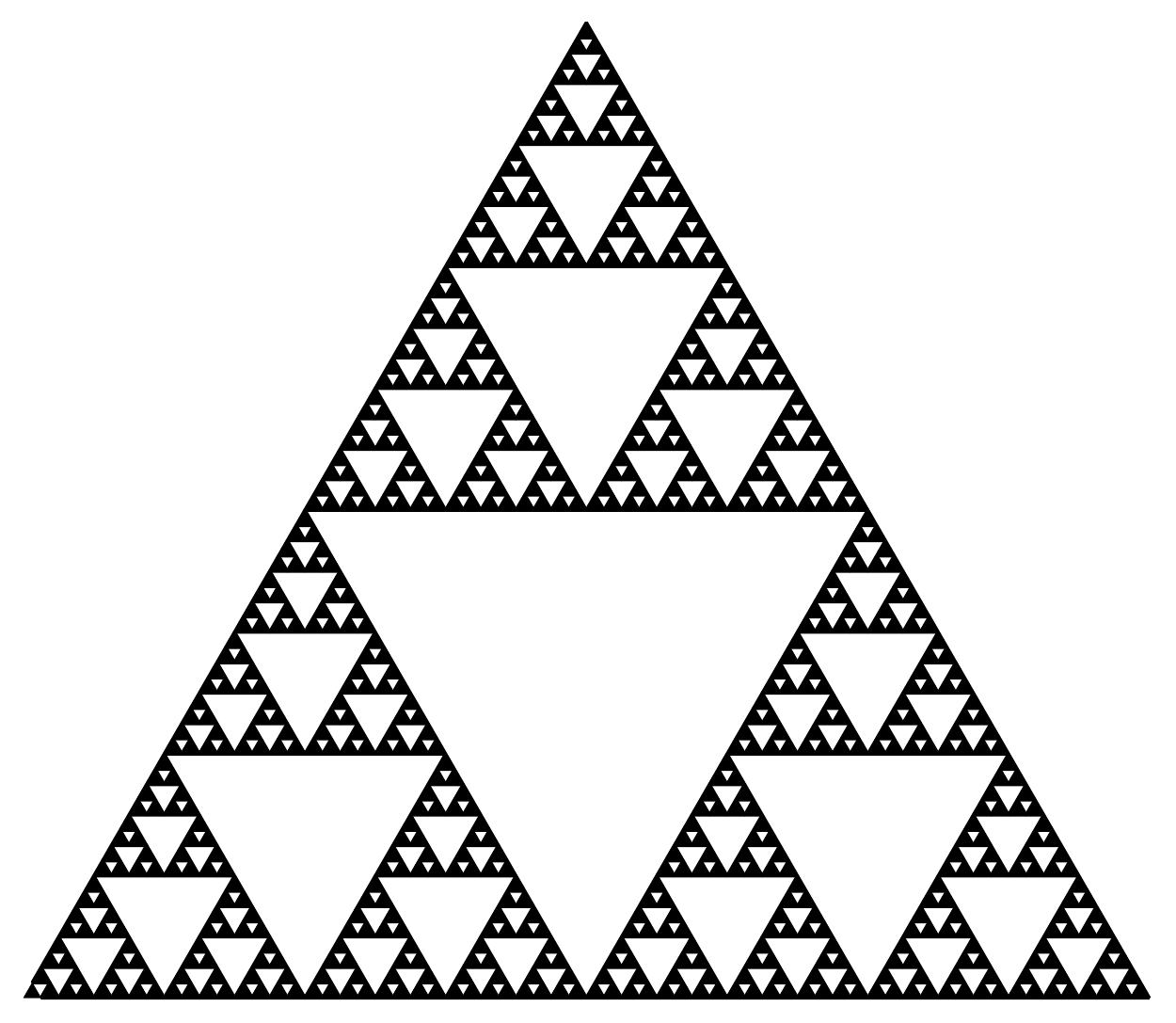}
	\begin{picture}(0,0)
	\put(-66,100){$q_1$}
	\put(-125,1){$q_2$}
	\put(-5,1){$q_3$}
	\end{picture}
	\caption{The Sierpinski gasket $\SG$.}\label{figexample31}
\end{figure}

On $\SG$, we take the normalized Hausdroff measure $\mu$ and the standard energy form $(\mathcal{E}, dom\mathcal E)$ satisfying 
\[\mathcal{E}(f,g)=\frac{5}{3}\sum_{i=1}^3\mathcal{E}(f\circ F_i, g\circ F_i), \quad \forall f,g\in dom\mathcal{E}.\]
In particular, we have $r=\frac 3 5$, and in addition,
\[d_H=\frac{\log3}{\log5-\log3},\quad d_W=1+d_H=\frac{\log5}{\log5-\log3},\quad d_S=\frac{2d_H}{d_W}=\frac{2\log3}{\log5}{\approx1.36521}.\]

It seems hard to get the exact formula of $\mathscr{C}(p)$. However, we will figure out that $\mathscr{C}(p)$ is indeed a ``curve'' by observing that $1<\mathscr C(1)<d_S$, and then using Proposition \ref{prop33}. In fact, this can be verified by estimating the maximal exponential growth ratio  of 
$\sum_{x\sim_m y}\big|h(x)-h(y)\big|$ as $m\rightarrow\infty$, which should be  $ r^{\mathscr C(1)d_W/2-d_H}$ for harmonic functions $h$ on $\mathcal {SG}$. Since any harmonic function $h$ is a combination of $h_1, h_2, h_3$ with $h_i(q_j)=\delta_{i,j}$, by calculating $\sum_{x\sim_m y}\big|h_1(x)-h_1(y)\big|$ with $m=3$, we  see that
\[1.02<\mathscr{C}(1)<1.14.\]

\section{Discrete characterizations of $\Lambda^{p,q}_\sigma(K)$}
In this section, we will provide some discrete characterizations of the Lipschitz-Besov spaces $\Lambda^{p,q}_\sigma(K)$. These characterizations will provide great convenience in proving Theorem \ref{thm11}. In particular, they  heavily rely on the nested structure of $K$.

\begin{definition}\label{def41}
(a). For $m\geq 0$, define $\Lambda_m=\{w\in W_*:r_w\leq r^m<r_{w^*}\}$ with $r=\min_{1\leq i\leq N}r_i$. In particular, we denote $\Lambda_0={\{\emptyset\}}$. 

(b). Define $V_{\Lambda_m}=\bigcup_{w\in \Lambda_m} F_wV_0$ for $m\geq 0$, and denote 
$$\mathring{V}_{\Lambda_m}=
\begin{cases}
V_0,&\text{ if }m=0,\\
V_{\Lambda_m}\setminus V_{\Lambda_{m-1}},&\text{ if }m\geq 1.
\end{cases}$$ 
\end{definition}

In the rest of this section, we will consider two kinds of discrete characterizations of $\Lambda^{p,q}_\sigma(K)$, basing on the cell graphs approximation and vertex graphs approximation of $K$ respectively.

\subsection{A Haar series expansion}
We begin with a Haar series expansion of a function. We classify Haar functions on $K$ into different levels based on the partition $\Lambda_m$.

\begin{definition}\label{def42}
(a). For each $f\in L^1(K)$, we define $E_w(f)=\frac{1}{\mu_w}\int_{F_wK}fd\mu$, and write $$E[f|\Lambda_m]=\sum_{w\in \Lambda_m}E_w(f)1_{F_wK},\quad m\geq 0,$$  
which can be understood as the conditional expectation of $f$ with respect to the sigma algebra generated by the collection  $\{F_wK: w\in \Lambda_m\}$.
In addition, we write \[\tilde{E}[f|\Lambda_m]=\begin{cases}
E[f|\Lambda_0], &\text{ if }m=0,\\
E[f|\Lambda_m]-E[f|\Lambda_{m-1}], &\text{ if }m\geq 1.
\end{cases}\]

\noindent(b). Define $\tilde{J}_m=\big\{\tilde{E}[f|\Lambda_m]:f\in L^1(K)\big\}$, and call $\tilde{J}_m$ the space of level-$m$ Haar functions. 
\end{definition}

It is easy to see that for $m\geq 1$, $\tilde{J}_m$ consists of functions $u$ which are piecewise constant on $\{F_wK: w\in \Lambda_m\}$, and satisfy $E[u|\Lambda_{m-1}]=0$. We have the following estimates.

\begin{lemma}\label{lemma43}
Let $f\in L^p(K)$ with $1<p<\infty$ and $u\in \tilde{J}_m$ with $m\geq 0$. Then

(a). $\big\|\tilde{E}[f|\Lambda_m]\big\|_{L^p(K)}\leq C I_p(f,r^{m-1})$ for any $m\geq 1$.

(b). $I_p(f,t)\leq C\|f\|_{L^p(K)}$ for any $0<t\leq 1$.
	
(c).  $I_p(u,r^n)\leq Cr^{(n-m)d_H/p}\|u\|_{L^p(K)}$ for any $n\geq m$.

\noindent The constant $C$ can be chosen to be  independent of $f,u$ and $p$.
\end{lemma}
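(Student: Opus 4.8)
The plan is to establish the three bounds independently, using only a few structural facts about the regular harmonic structure together with Jensen's inequality and the elementary estimate $|a-b|^p\le 2^{p-1}(|a|^p+|b|^p)$, which is what keeps all constants independent of $p$. The facts I rely on are: $\mu(B_t(x))\asymp t^{d_H}$; for $w\in\Lambda_m$ one has $r^{m+1}<r_w\le r^m$, hence $\mu_w=r_w^{d_H}\asymp r^{md_H}$ and $\#\Lambda_m\asymp r^{-md_H}$; and, by the p.c.f.\ property, distinct cells $\{F_wK:w\in\Lambda_m\}$ meet only along the finite junction sets $F_wV_0$, each cell having at most a uniform number $M$ of neighbours. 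I also use that $\{F_wK:w\in\Lambda_m\}$ refines $\{F_vK:v\in\Lambda_{m-1}\}$, so each $w\in\Lambda_m$ has a unique parent $v=v(w)\in\Lambda_{m-1}$ with $F_wK\subseteq F_vK$.

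For part (a), on a cell $F_wK$ with $w\in\Lambda_m$ the function $\tilde{E}[f|\Lambda_m]$ equals the constant $E_w(f)-E_{v}(f)$, which I rewrite as the double average $(\mu_w\mu_v)^{-1}\int_{F_wK}\int_{F_vK}(f(x)-f(y))\,d\mu(y)\,d\mu(x)$; Jensen then gives $|E_w(f)-E_v(f)|^p\le (\mu_w\mu_v)^{-1}\int_{F_wK}\int_{F_vK}|f(x)-f(y)|^p\,d\mu(y)\,d\mu(x)$. Summing $\|\tilde{E}[f|\Lambda_m]\|_{L^p(K)}^p=\sum_{w\in\Lambda_m}\mu_w|E_w(f)-E_v(f)|^p$ and regrouping the children of each fixed $v\in\Lambda_{m-1}$ (whose cells reassemble $F_vK$) reduces the estimate to $\sum_{v\in\Lambda_{m-1}}\mu_v^{-1}\int_{F_vK}\int_{F_vK}|f(x)-f(y)|^p\,d\mu(y)\,d\mu(x)$. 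Since $\diam(F_vK)\le r_v\le r^{m-1}$ forces $F_vK\subseteq B_{r^{m-1}}(x)$ for $x\in F_vK$, and $\mu_v^{-1}\asymp (r^{m-1})^{-d_H}$, this last sum is $\lesssim I_p(f,r^{m-1})^p$, proving (a).

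Part (b) is a direct computation: after $|f(x)-f(y)|^p\le 2^{p-1}(|f(x)|^p+|f(y)|^p)$, the first term integrates to $t^{-d_H}\int_K|f(x)|^p\mu(B_t(x))\,d\mu(x)\le C\|f\|_{L^p(K)}^p$, while the second is identical once one exchanges the order of integration and uses the symmetry $y\in B_t(x)\Leftrightarrow x\in B_t(y)$ with $\mu(B_t(y))\le Ct^{d_H}$. For part (c), write $u=\sum_{w\in\Lambda_m}u_w1_{F_wK}$ and split the double integral defining $I_p(u,r^n)$ over pairs of cells. Only pairs whose cells lie within distance $r^n$ contribute, and for such a pair any $x\in F_wK$, $y\in F_{w'}K$ with $R(x,y)<r^n$ must both lie within $r^n$ of a common junction; hence the contributing set of pairs has measure $\lesssim (r^{nd_H})^2$. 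This yields $\int_K\int_{B_{r^n}(x)}|u(x)-u(y)|^p\,d\mu(y)\,d\mu(x)\lesssim r^{2nd_H}\sum_{w\sim w'}|u_w-u_{w'}|^p$, the sum running over neighbouring pairs $w\sim w'$. Bounding $|u_w-u_{w'}|^p\le 2^{p-1}(|u_w|^p+|u_{w'}|^p)$ and using that each cell has at most $M$ neighbours gives $\lesssim r^{2nd_H}\sum_w|u_w|^p\asymp r^{2nd_H}r^{-md_H}\|u\|_{L^p(K)}^p$, using $\|u\|_{L^p(K)}^p=\sum_w|u_w|^p\mu_w\asymp r^{md_H}\sum_w|u_w|^p$. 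Multiplying by $r^{-nd_H}$ produces $I_p(u,r^n)^p\lesssim r^{(n-m)d_H}\|u\|_{L^p(K)}^p$.

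The main obstacle is the geometric claim underlying part (c): that two points in distinct level-$\Lambda_m$ cells at resistance distance $<r^n$ are each within $r^n$ of a shared boundary point, so that the contributing pairs carry the full factor $(r^{nd_H})^2$ rather than only $r^{nd_H}$. This is exactly where the p.c.f.\ structure and the series (cut-point) behaviour of the resistance metric are needed, and where the crude triangle-inequality split—which would only see one of the two points near the junction—destroys the decay. The requirement that $C$ be independent of $p$ is a minor bookkeeping matter, resolved by never invoking anything beyond Jensen and the inequality $|a-b|^p\le 2^{p-1}(|a|^p+|b|^p)$, whose $p$-th roots remain uniformly bounded.
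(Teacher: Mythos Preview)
Your argument is correct and tracks the paper's proof closely. Part (a) takes a mildly different route: the paper uses the $L^p$-contraction of conditional expectation, $\big\|E[f|\Lambda_m]-E[f|\Lambda_{m-1}]\big\|_{L^p}\le\big\|f-E[f|\Lambda_{m-1}]\big\|_{L^p}$, and then a single Jensen average over the parent cell, whereas you write $E_w(f)-E_{v(w)}(f)$ as a double average and apply Jensen directly. Both arrive at the same integral over $F_vK\times F_vK$ and are equally clean. Parts (b) and (c) coincide with the paper's argument.

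Two small points on (c). First, the restriction to \emph{adjacent} pairs $w\sim w'$ is only automatic once $n\ge m+k$ for a fixed $k$ (non-adjacent $\Lambda_m$-cells have resistance separation $\gtrsim r^m$, so $r^n<r^{m+k}$ rules them out); for $m\le n<m+k$ the estimate follows immediately from part (b), and the paper makes this split explicit while you leave it implicit. Second, you do not actually need $x$ and $y$ to be near a \emph{common} junction: it is enough that $x$ lies within $O(r^n)$ of some point of $F_wV_0$ and $y$ within $O(r^n)$ of some point of $F_{w'}V_0$, which follows from the fact that $F_wV_0$ is a finite cut set in the resistance metric (so $\min_{z\in F_wV_0}R(x,z)\lesssim R(x,y)$). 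This already gives the $r^{2nd_H}$ product-measure bound per adjacent pair, and avoids the stronger series/cut-point claim you flag as the main obstacle.
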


\begin{proof} (a). For each point $x\in K\setminus V_{\Lambda_m}$, we define $Z_{\Lambda_m}(x)=F_wK$ with $w\in \Lambda_{m}$ such that $x\in F_wK$. Clearly, we have 
$Z_{\Lambda_m}(x)\subset B_{r^{m}}(x)$ since the diameter of each cell $F_wK,w\in \Lambda_m$ is at most $r^{m}$. For $m\geq 1$, we have 
\[\begin{aligned}
\big\|\tilde{E}[f|\Lambda_m]\big\|_{L^p(K)}&=\big(\int_{K}\big|E[f|\Lambda_m](x)-E[f|\Lambda_{m-1}](x)\big|^pd\mu(x)\big)^{1/p}\\
&\leq \big(\int_{K}\big|f(x)-{E}[f|\Lambda_{m-1}](x)\big|^pd\mu(x)\big)^{1/p}\\
&\leq \big(\int_K\big(\mu_{Z_{{\Lambda_{m-1}(x)}}}^{-1}\int_{Z_{\Lambda_{m-1}(x)}}|f(x)-f(y)|^pd\mu(y)\big)d\mu(x)\big)^{1/p}\\
&\leq \big(\int_Kr^{-md_H}\int_{B_{r^{m-1}}(x)}|f(x)-f(y)|^pd\mu(y)d\mu(x)\big)^{1/p}\leq CI_p(f,r^{m-1}),
\end{aligned}\]
where we ignore the finitely many points in $V_{\Lambda_m}$ in the above estimate. \vspace{0.2cm}

(b) is obvious, and $C$ only depends on the estimate $\mu(B_t(x))\lesssim t^{d_H}$.\vspace{0.2cm}

(c). First, we have the estimate that
\begin{equation}\label{eqn41}
\begin{aligned}
\|u\|_{L^p(K)}&=\big(\sum_{w\in\Lambda_m}\mu_w|E_w(u)|^p\big)^{1/p}\geq \big(r^{(m+1)d_H}\sum_{w\in \Lambda_m}|E_w(u)|^p\big)^{1/p}\\
&\gtrsim r^{md_H/p}\big(\sum_{w\sim w'\text{ in }\Lambda_m}|E_w(u)-E_{w'}(u)|^p\big)^{1/p},
\end{aligned}
\end{equation}
where we write $w\sim w'$ if $w\neq w'$ and $F_wK\cap F_{w'}K\neq \emptyset$, and use the fact that $\#\{w'\in \Lambda_m:w'\sim w\}\leq \#V_0\#\mathcal{C}$ for any $w\in\Lambda_m$. 

Next, notice that there is $k>0$ such that $R(x,y)>r^n$ for any $m\geq 0$, $n\geq m+k$ and $x\in F_wK, y\in F_{w'}K$ with $F_wK\cap F_{w'}K=\emptyset$, $w,w'\in \Lambda_m$. It suffices to consider $n\geq m+k$, since for $n<m+k$ we have (b). For $n\geq m+k$, we have the estimate 
\[\begin{aligned}
I_p(u,r^n)&=\big(\int_Kr^{-nd_H}\int_{B_{r^{n}}(x)}|u(x)-u(y)|^pd\mu(y)d\mu(x)\big)^{1/p}\\
&\asymp \big(\iint_{R(x,y)<r^n}r^{-nd_H}|u(x)-u(y)|^pd\mu(y)d\mu(x)\big)^{1/p}\\
&=\big(\sum_{w\sim w'\text{ in }\Lambda_m}\iint_{\{x\in F_wK,y\in F_{w'}K:R(x,y)<r^n \}}r^{-nd_H}|u(x)-u(y)|^pd\mu(y)d\mu(x)\big)^{1/p}\\
&\lesssim \big(r^{nd_H}\sum_{w\sim w'\text{ in }\Lambda_m}|E_w(u)-E_{w'}(u)|^p\big)^{1/p}.
\end{aligned}\]
Combining this with the estimate (\ref{eqn41}), we get (c).
\end{proof}

Using Lemma \ref{lemma43}, we can prove a Haar function decomposition of  the spaces $\Lambda^{p,q}_\sigma(K)$ for $0<\sigma<\mathscr{L}_1(p)$. 
\begin{proposition}\label{prop44}
	For $1<p<\infty$, $1\leq q\leq \infty$ and $0<\sigma<\mathscr{L}_1(p)=\frac{d_S}{p}$, we have $f\in \Lambda^{p,q}_\sigma(K)$ if and only if $\big\|r^{-m\sigma d_W/2}\|\tilde{E}[f|\Lambda_m]\|_{L^p(K)}\big\|_{l^q}<\infty$. In addition, 
	$\|f\|_{\Lambda^{p,q}_\sigma(K)}\asymp \big\|r^{-m\sigma  d_W/2}\|\tilde{E}[f|{\Lambda_m}]\|_{L^p(K)}\big\|_{l^q}$.
\end{proposition}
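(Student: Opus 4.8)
The plan is to prove the two-sided bound in two directions, after first replacing the continuous integral defining the $\Lambda^{p,q}_\sigma(K)$-norm by a discrete $l^q$ sum over the scales $r^n$. First I would show that $t\mapsto I_p(f,t)$ is comparable across each scale block: for $r^n\le t\le r^{n-1}$, the inclusions $B_{r^n}(x)\subset B_t(x)\subset B_{r^{n-1}}(x)$ together with $t^{-d_H}\asymp r^{-nd_H}$ give $r^{d_H/p}I_p(f,r^n)\le I_p(f,t)\le r^{-d_H/p}I_p(f,r^{n-1})$. Since $t^{-\sigma d_W/2}\asymp r^{-n\sigma d_W/2}$ and $\int_{r^n}^{r^{n-1}}dt/t=\log(1/r)$ on each block, summing over $n$ yields $\big\|t^{-\sigma d_W/2}I_p(f,t)\big\|_{L^q_*(0,1]}\asymp \big\|r^{-n\sigma d_W/2}I_p(f,r^n)\big\|_{l^q}$. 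It therefore suffices to compare the discrete quantity $\|f\|_{L^p(K)}+\big\|r^{-n\sigma d_W/2}I_p(f,r^n)\big\|_{l^q}$ with the Haar norm $\big\|r^{-m\sigma d_W/2}\|\tilde E[f|\Lambda_m]\|_{L^p(K)}\big\|_{l^q}$; this reduction uses no restriction on $\sigma$.

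For the inclusion $\Lambda^{p,q}_\sigma(K)\subset$ (Haar series space), I would invoke Lemma \ref{lemma43}(a), which bounds $\|\tilde E[f|\Lambda_m]\|_{L^p(K)}\le CI_p(f,r^{m-1})$ for $m\ge1$, while the $m=0$ term is the mean $|E_\emptyset(f)|\le\|f\|_{L^p(K)}$ by Jensen's inequality. A shift of index $m\mapsto m-1$, together with Lemma \ref{lemma43}(b) to absorb $I_p(f,1)\lesssim\|f\|_{L^p(K)}$, then bounds the Haar $l^q$-norm by the discrete $\Lambda$-norm of the previous paragraph. This direction is valid for all $\sigma>0$.

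For the reverse inclusion I would argue as follows. Writing $u_m=\tilde E[f|\Lambda_m]\in\tilde J_m$ and $a_m=r^{-m\sigma d_W/2}\|u_m\|_{L^p(K)}$, the assumption $\|a_m\|_{l^q}<\infty$ forces $\sum_m\|u_m\|_{L^p(K)}=\sum_m r^{m\sigma d_W/2}a_m<\infty$ by H\"older against the geometric weight (here $\sigma>0$ is used), so the martingale partial sums converge in $L^p(K)$, giving $f=\sum_{m\ge0}u_m$ with $\|f\|_{L^p(K)}\lesssim\|a_m\|_{l^q}$. Since $I_p(\cdot,r^n)$ is a seminorm that is continuous on $L^p(K)$ (by Lemma \ref{lemma43}(b)), the triangle inequality gives $I_p(f,r^n)\le\sum_m I_p(u_m,r^n)$. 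For the fine scales $m\le n$ I would apply Lemma \ref{lemma43}(c) to get $I_p(u_m,r^n)\le Cr^{(n-m)d_H/p}\|u_m\|_{L^p(K)}$, and for the coarse scales $m>n$ the crude bound Lemma \ref{lemma43}(b) gives $I_p(u_m,r^n)\le C\|u_m\|_{L^p(K)}$. After multiplying by $r^{-n\sigma d_W/2}$, these produce the kernels $r^{(n-m)(d_H/p-\sigma d_W/2)}$ (for $m\le n$) and $r^{(m-n)\sigma d_W/2}$ (for $m>n$) acting on the sequence $(a_m)$; Young's inequality for the convolution then yields $\big\|r^{-n\sigma d_W/2}I_p(f,r^n)\big\|_{l^q}\le C\|a_m\|_{l^q}$, completing the estimate.

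The hard part, and the exact point where the hypothesis $\sigma<\mathscr{L}_1(p)=d_S/p$ enters, is the summability of the fine-scale kernel: $\sum_{n\ge m}r^{(n-m)(d_H/p-\sigma d_W/2)}<\infty$ precisely when $d_H/p-\sigma d_W/2>0$, i.e. $\sigma<\frac{2d_H}{pd_W}=\frac{d_S}{p}$. The coarse-scale kernel $r^{(m-n)\sigma d_W/2}$ is summable automatically from $\sigma>0$. Thus the combined kernel lies in $l^1(\mathbb Z)$ exactly in the stated range, and I expect the only remaining bookkeeping to be the routine verification of Young's inequality uniformly in $1\le q\le\infty$ (with the usual modifications at $q=1,\infty$); the conceptual content is entirely the two-sided geometric decay, valid precisely for $0<\sigma<d_S/p$.
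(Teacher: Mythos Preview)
Your proposal is correct and follows essentially the same approach as the paper's proof: discretize the $\Lambda^{p,q}_\sigma$-norm to scales $r^n$, use Lemma~\ref{lemma43}(a) for the easy direction, and for the reverse direction decompose $f=\sum_m \tilde E[f|\Lambda_m]$, split the sum into fine and coarse levels, and apply Lemma~\ref{lemma43}(c) and (b) respectively, with the geometric kernel summed via Minkowski/Young to yield the $l^q$-bound precisely when $0<\sigma<d_S/p$. Your treatment is in fact slightly more explicit than the paper's in justifying the $L^p$-convergence $f=\sum_m u_m$ before invoking the triangle inequality for $I_p(\cdot,r^n)$.
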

\begin{proof}
	We first observe that
	\begin{equation}\label{eqn42}
	\|f\|_{\Lambda^{p,q}_\sigma(K)}=\|f\|_{L^p(K)}+\big\|t^{-\sigma d_W/2}I_p(f,t)\big\|_{L^q_*(0,1)}\asymp \|f\|_{L^p(K)}+\|r^{-m\sigma d_W/2}I_p(f,r^m)\|_{l^q}.
	\end{equation}
	
	By using Lemma \ref{lemma43} (a), we then easily see that
	\[\big\|r^{-m\sigma d_W/2}\|\tilde{E}[f|{\Lambda_m}]\|_{L^p(K)}\big\|_{l^q}\lesssim \|f\|_{\Lambda^{p,q}_\sigma(K)}.\]
	
	For the other direction, we write $f_m=\tilde{E}[f|\Lambda_m]$ for $m\geq 0$, and assume that 
	\[\big\|r^{-m\sigma d_W/2}\|f_m\|_{L^p(K)}\big\|_{l^q}<\infty.\] 
	Then we have 
	\begin{equation}\label{eqn43}
	\big\|r^{-m\sigma d_W/2}I_p(f,r^m)\big\|_{l^q}\leq \big\|\sum_{n=0}^{m-1} r^{-m\sigma d_W/2}I_p(f_n,r^m)\big\|_{l^q}+\big\|\sum_{n=m}^{\infty} r^{-m\sigma d_W/2}I_p(f_n,r^m)\big\|_{l^q},
	\end{equation}
	with 
	\begin{equation}\label{eqn44}
	\begin{aligned}
	\big\|\sum_{n=0}^{m-1} r^{-m\sigma d_W/2}I_p(f_n,r^m)\big\|_{l^q}&\lesssim \big\|\sum_{n=0}^{m-1} r^{-m\sigma d_W/2+(m-n)d_H/p}\|f_n\|_{L^p(K)}\big\|_{l^q}\\
	&=\big\|\sum_{n=1}^{m} r^{-m\sigma d_W/2+nd_H/p}\|f_{m-n}\|_{L^p(K)}\big\|_{l^q}\\
	&=\big\|\sum_{n=1}^{m} r^{-(m-n)\sigma d_W/2+n(d_H/p-\sigma d_W/2)}\|f_{m-n}\|_{L^p(K)}\big\|_{l^q}\\
	&\leq\big(\sum_{n=1}^\infty r^{n(d_H/p-\sigma d_W/2)}\big)\cdot\big\|r^{-m\sigma d_W/2}\|f_m\|_{L^p(K)}\big\|_{l^q}
	\end{aligned}
	\end{equation}
	by using Lemma \ref{lemma43} (c) and the Minkowski inequality, and 
	\begin{equation}\label{eqn45}
	\begin{aligned}
	\big\|\sum_{n=m}^{\infty} r^{-m\sigma d_W/2}I_p(f_n,r^m)\big\|_{l^q}&\lesssim \big\|\sum_{n=m}^{\infty} r^{-m\sigma d_W/2}\|f_n\|_{L^p(K)}\big\|_{l^q}\\
	&=\big\|\sum_{n=0}^{\infty} r^{-m\sigma d_W/2}\|f_{n+m}\|_{L^p(K)}\big\|_{l^q}\\
	&\leq \big(\sum_{n=0}^\infty r^{n\sigma d_W/2}\big)\cdot\big\| r^{-m\sigma d_W/2}\|f_m\|_{L^p(K)}\big\|_{l^q}
	\end{aligned}
	\end{equation}
	by using Lemma \ref{lemma43} (b) and the Minkowski inequality again.
	Combining equations (\ref{eqn42}), (\ref{eqn43}), (\ref{eqn44}), (\ref{eqn45}), and noticing that $0<\sigma<\frac{d_S}{p}$, we get that
	\[\|f\|_{\Lambda^{p,q}_\sigma(K)}\lesssim \big\|r^{-m\sigma d_W/2}\|f_m\|_{L^p(K)}\big\|_{l^q}.\]
	 The proposition follows.
\end{proof}

\subsection{Graph Laplacians and a tent function decomposition}
Now, we turn to the case when $\sigma>\mathscr{L}_1(p)$. In this case, we have $B^{p,q}_\sigma(K)\subset C(K)$ as a well-known result{\cite{HZ}}, and we would expect this to happen for $\Lambda^{p,q}_\sigma(K)$. This can be easily seen from the following Lemma.

\begin{lemma}\label{lemma45}
Let $1<p<\infty$ and $f\in L^p(K)$. we define $E_{B_t(x)}(f)=\frac1{\mu(B_t(x))}\int_{B_t(x)}fd\mu$ for any $x\in K$ and $0<t\leq 1$. Then we have 

(a). If $\{x_j\}_{j=1}^n\subset K$ is a finite set of points such that $\sum_{j=1}^n 1_{B_{rt}(x_j)}\leq \lambda$ for some finite number $\lambda<\infty$, then 
\[\big(\sum_{j=1}^n |E_{B_t(x_j)}(f)-E_{B_{rt}(x_j)}(f)|^p\big)^{1/p}\lesssim \lambda^{1/p} t^{-d_H/p}I_p\big(f,(r+1)t\big).\]

(b). For $1\leq q\leq \infty$, $\Lambda^{p,q}_\sigma(K)\subset C(K)$ for $\sigma>\frac{d_S}{p}$.
\end{lemma}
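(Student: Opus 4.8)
For part (a), the plan is to realize each difference $E_{B_t(x_j)}(f)-E_{B_{rt}(x_j)}(f)$ as a double average, arranged so that the \emph{smaller} balls $B_{rt}(x_j)$ carry the outer integration, since these are the sets whose overlap is controlled. Writing $B=B_t(x_j)$ and $B'=B_{rt}(x_j)$, note $B'\subset B$ and
\[E_{B'}(f)-E_{B}(f)=\frac{1}{\mu(B')\mu(B)}\int_{B'}\int_{B}\big(f(y)-f(x)\big)\,d\mu(x)\,d\mu(y),\]
so Jensen's inequality (averaging over a probability measure is an $L^p$-contraction) gives
\[\big|E_{B'}(f)-E_{B}(f)\big|^p\le \frac{1}{\mu(B')\mu(B)}\int_{B'}\int_{B}|f(y)-f(x)|^p\,d\mu(x)\,d\mu(y).\]
Since $r<1$, the regularity $\mu(B_s(z))\asymp s^{d_H}$ yields $\mu(B')\mu(B)\gtrsim t^{2d_H}$, and for $y\in B_{rt}(x_j)$, $x\in B_t(x_j)$ the triangle inequality gives $R(x,y)<(1+r)t$, i.e.\ $B_t(x_j)\subset B_{(1+r)t}(y)$. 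I would then enlarge the inner integral to $B_{(1+r)t}(y)$.

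Summing over $j$ and using the overlap hypothesis $\sum_j 1_{B_{rt}(x_j)}\le\lambda$ to collapse the outer integrals into a single integral over $K$, I obtain
\[\sum_{j=1}^n\big|E_{B'}(f)-E_{B}(f)\big|^p\lesssim \frac{\lambda}{t^{2d_H}}\int_K\int_{B_{(1+r)t}(y)}|f(y)-f(x)|^p\,d\mu(x)\,d\mu(y)=\frac{\lambda}{t^{2d_H}}\,\big((1+r)t\big)^{d_H}I_p\big(f,(1+r)t\big)^p.\]
As $\big((1+r)t\big)^{d_H}\asymp t^{d_H}$, taking $p$-th roots gives exactly $\lesssim\lambda^{1/p}t^{-d_H/p}I_p(f,(r+1)t)$. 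The only delicate point is to keep the outer integration over the small balls $B_{rt}(x_j)$, since these, and not the $B_t(x_j)$, are the sets whose overlap the hypothesis bounds.

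For part (b), I would deduce continuity from (a) with a single point ($n=1$, $\lambda=1$): with $t=r^m$ this reads $\big|E_{B_{r^m}(x)}(f)-E_{B_{r^{m+1}}(x)}(f)\big|\lesssim r^{-md_H/p}I_p\big(f,(1+r)r^m\big)$, uniformly in $x$. Since $f\in\Lambda^{p,q}_\sigma(K)$ controls $\big\{r^{-m\sigma d_W/2}I_p(f,r^m)\big\}_m$ in $\ell^q$ (recall (\ref{eqn42}), and shifting the radius by the fixed factor $1+r$ alters norms only by constants), and since the assumption $\sigma>\frac{d_S}{p}$ is exactly $\frac{\sigma d_W}{2}-\frac{d_H}{p}>0$, the telescoping series $\sum_m r^{-md_H/p}I_p\big(f,(1+r)r^m\big)$ converges: as a geometric series when $q=\infty$, and by H\"older against the summable sequence $r^{m(\sigma d_W/2-d_H/p)}$ when $q<\infty$. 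Hence $\{E_{B_{r^m}(x)}(f)\}_m$ is uniformly Cauchy, defining a limit $\tilde f(x)$ for every $x$ with uniform tails $\tau_m\to0$, and by the Lebesgue differentiation theorem in the doubling space $(K,R,\mu)$ we have $\tilde f=f$ $\mu$-a.e.

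The remaining step, which I expect to be the main obstacle, is upgrading this a.e.\ convergence to genuine continuity of $\tilde f$, because for fixed $m$ the map $x\mapsto E_{B_{r^m}(x)}(f)$ need not be continuous (the balls and their measures can jump). I would bypass this by estimating the modulus of continuity directly: for $R(x,x')\le r^m$ both $B_{r^m}(x)$ and $B_{r^m}(x')$ lie in the common ball $B_{2r^m}(x)$, and the same Jensen-plus-regularity computation as in (a) bounds each of $\big|E_{B_{r^m}(x)}(f)-E_{B_{2r^m}(x)}(f)\big|$ and $\big|E_{B_{r^m}(x')}(f)-E_{B_{2r^m}(x)}(f)\big|$ by $\lesssim r^{-md_H/p}I_p(f,4r^m)$, which tends to $0$ uniformly by the same exponent estimate. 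Combining with the tails $\tau_m$ gives $|\tilde f(x)-\tilde f(x')|\to0$ uniformly over all pairs with $R(x,x')\le r^m$, so $\tilde f$ is uniformly continuous and $f$ admits a continuous representative, proving $\Lambda^{p,q}_\sigma(K)\subset C(K)$.
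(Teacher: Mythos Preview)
Your proof is correct and follows essentially the same route as the paper's: for (a) you keep the outer integration on the small balls $B_{rt}(x_j)$ (exactly the paper's key maneuver), apply Jensen, enlarge to $B_{(1+r)t}(y)$, and use the overlap bound; for (b) you telescope the ball averages and handle the cross term by comparing to a common larger ball, which is just a cosmetic variant of the paper's estimate $|E_{B_{r^m}(x)}(f)-E_{B_{r^m}(y)}(f)|\lesssim r^{-md_H/p}I_p(f,r^{m'})$. The only extra ingredient you invoke is the Lebesgue differentiation theorem to identify $\tilde f$ with $f$ a.e., which the paper leaves implicit.
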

\begin{proof}
(a) follows from a direct estimate,
\[\begin{aligned}
&\big(\sum_{j=1}^n |E_{B_t(x_j)}(f)-E_{B_{rt}(x_j)}(f)|^p\big)^{1/p}\\
\lesssim& t^{-d_H/p}\big(\sum_{j=1}^n\int_{B_{rt}(x_j)}|f(y)-E_{B_t(x_j)}(f)|^pd\mu(y)\big)^{1/p}\\
\lesssim& t^{-d_H/p}\Big(\sum_{j=1}^n\int_{B_{rt}(x_j)}\big((r+1)t\big)^{-d_H}\int_{B_{(r+1)t}(y)}|f(y)-f(z)|^pd\mu(z)d\mu(y)\Big)^{1/p}\\
\leq &\lambda^{1/p}t^{-d_H/p}I_p\big(f,(r+1)t\big).
\end{aligned}\]

(b). Let $f\in \Lambda^{p,q}_\sigma(K)$ and notice that $\frac{\sigma d_W}{2}> \frac{d_H}{p}$.
First, by (a), we can see that $f(x)=\lim_{m\to\infty} E_{B_{r^m}(x)}(f)$ is well defined for every $x\in K$, and 
\[\big|f(x)-E_{B_{r^m}(x)}(f)\big|\leq C_1\sum_{n=m}^\infty r^{-nd_H/p}I_p(f,r^n)\leq C_2r^{m(\sigma d_W/2-d_H/p)}\|f\|_{\Lambda^{p,\infty}_\sigma(K)}.\]
Next, we fix $x,y\in K$ such that $R(x,y)<r^m$. Choose $k>0$ such that $r^k<1/2$ and let $m'=(m-k)\vee 0$. It is not hard to see that 
\[\big|E_{B_{r^m}(x)}(f)-E_{B_{r^m}(y)}(f)\big|\leq C_3r^{-md_H/p}I_p(f,r^{m'})\leq C_4r^{m(\sigma d_W/2-d_H/p)}\|f\|_{\Lambda^{p,\infty}_\sigma(K)}.\]
Combining the above two estimates, we can see that $f$ is continuous.
\end{proof}

In the above proof, we actually see that $f$ is H\"{o}lder continuous on $K$ if $f\in \Lambda^{p,q}_\sigma(K)$ with $\sigma>\mathscr L_1(p)$. In addition, we will have a characterization of $\Lambda^{p,q}_\sigma(K)$ based on the discrete Laplacian on $\Lambda_m$ for $\mathscr{L}_1(p)<\sigma<\mathscr{C}(p)$.

\begin{definition}
(a). For $m\geq 0$, define the {\em graph energy form} on $V_{\Lambda_m}$ by \[\mathcal{E}_{\Lambda_m}(f,g)=\sum_{w\in \Lambda_m}r_w^{-1}\mathcal{E}_0(f\circ F_w,g\circ F_w), \quad \forall f,g\in l(V_{\Lambda_m}).\] 

(b). Define $H_{\Lambda_m}: l(V_{\Lambda_m})\to l(V_{\Lambda_m})$ the {\em graph Laplacian} associated with $\mathcal{E}_{\Lambda_m}$, i.e. 
\[\mathcal{E}_{\Lambda_m}(f,g)=-<H_{\Lambda_m}f,g>_{l^2(V_{\Lambda_m})}=-<f,H_{\Lambda_m}g>_{l^2(V_{\Lambda_m})}, \quad \forall f,g\in l(V_{\Lambda_m}),\]
where $l^2(V_{\Lambda_m})$ stands for the discrete $l^2$ inner product over $V_{\Lambda_m}$ with the counting measure. 
 
(c). For $m\geq 1$, define $J_m=\big\{f\in C(K): f \text{ is harmonic in } F_wK,  \forall w\in \Lambda_m, \text{ and }f|_{V_{\Lambda_{m-1}}}=0\big\}$, and call $J_m$ the space of {\em level-$m$ tent functions}. For convenience, set $J_0=\mathcal{H}_0$.

(d). For $1<p<\infty$, $1\leq q\leq \infty$ and $\sigma>\frac{d_S}{p}$, define $$\Lambda^{p,q}_{\sigma,(1)}(K)=\big\{f\in C(K):\{r^{m(-\sigma d_W/2+1+d_H/p)}\|H_{\Lambda_m}f\|_{l^p(V_{\Lambda_{m}})}\}\in l^q\big\},$$ 
with norm $\|f\|_{\Lambda^{p,q}_{\sigma,(1)}(K)}=\|f\|_{L^p(K)}+\big\|r^{m(-\sigma d_W/2+1+d_H/p)}\|H_{\Lambda_m}f\|_{l^p(V_{\Lambda_m})}\big\|_{l^q}$.
\end{definition}

For each $f\in C(K)$, clearly $f$ admits a unique expansion in terms of tent functions $f=\sum_{m=0}^\infty f_m$ with $f_m\in J_m,\forall m\geq 0$. 

Before proceeding, let's first collect some easy observations.

\begin{lemma}\label{lemma47}
Let $1<p<\infty$ and $u\in J_m$ with $m\geq 0$.

(a). $H_{\Lambda_n}u|_{\mathring{V}_{\Lambda_{n}}}=0$ for any $n>m$. 

(b). $\|u\|_{L^p(K)}\asymp r^{m(1+d_H/p)}\|H_{\Lambda_m}u\|_{l^p(\mathring{V}_{\Lambda_{m}})}\asymp r^{m(1+d_H/p)}\|H_{\Lambda_m}u\|_{l^p({V}_{\Lambda_{m}})}$.

(c). For any $\sigma<\mathscr{C}(p)$, we have $I_{p}(u,r^n)\leq Cr^{(n-m)\sigma d_W/2}\|u\|_{L^p(K)}$ for all $n\geq m$. 
\end{lemma}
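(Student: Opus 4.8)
The plan is to treat the three parts in turn, drawing only on the harmonicity of $u$ inside the level-$m$ cells, the self-similar scaling of $R$, $\mu$ and $\mathcal{E}$, the bounded combinatorial complexity of the cell graphs, and the finite-dimensionality of $\mathcal{H}_0$ (on which all norms are equivalent with structural constants).

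For (a), I would first observe that for $n>m$ the nesting $V_{\Lambda_m}\subset V_{\Lambda_{n-1}}$ forces every $x\in\mathring{V}_{\Lambda_n}=V_{\Lambda_n}\setminus V_{\Lambda_{n-1}}$ to lie in the interior of a unique level-$m$ cell $F_wK$, $w\in\Lambda_m$. The value $H_{\Lambda_n}u(x)$ depends only on $u$ at $x$ and at its level-$n$ neighbours, all of which sit inside $F_wK$; there $u\circ F_w\in\mathcal{H}_0$. I would then invoke the energy-minimizing (harmonic extension) characterization of $\mathcal{H}_0$: the discrete Laplacian of a harmonic function vanishes at every interior vertex of every refinement of $V_0$. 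Rescaling by $r_w^{-1}$ gives $H_{\Lambda_n}u(x)=0$.

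For (b) I would establish two separate equivalences and combine them. Writing $\|u\|_{L^p(K)}^p=\sum_{w\in\Lambda_m}\mu_w\|u\circ F_w\|_{L^p(K)}^p$, using $\mu_w\asymp r^{md_H}$, the equivalence $\|u\circ F_w\|_{L^p(K)}\asymp\|u|_{F_wV_0}\|_{l^p(V_0)}$ on $\mathcal{H}_0$ (uniformly in $w$), the bounded number of cells meeting a vertex, and $u|_{V_{\Lambda_{m-1}}}=0$, one gets $\|u\|_{L^p(K)}\asymp r^{md_H/p}\|u\|_{l^p(\mathring{V}_{\Lambda_m})}$. Next, since $H_{\Lambda_m}$ has entries of size $\asymp r^{-m}$ and uniformly bounded degree, a Schur/Young estimate yields $\|H_{\Lambda_m}u\|_{l^p(V_{\Lambda_m})}\lesssim r^{-m}\|u\|_{l^p(\mathring{V}_{\Lambda_m})}$. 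The substantive point is the reverse bound $\|H_{\Lambda_m}u\|_{l^p(\mathring{V}_{\Lambda_m})}\gtrsim r^{-m}\|u\|_{l^p(\mathring{V}_{\Lambda_m})}$, i.e. uniform-in-$m$ invertibility of the interior Dirichlet Laplacian $A=H_{\Lambda_m}|_{\mathring{V}_{\Lambda_m}}$. Because every vertex of $\mathring{V}_{\Lambda_m}$ is joined, within a level-$m$ cell, to a vertex where $u$ vanishes, the quadratic form satisfies $\langle -Au,u\rangle=\mathcal{E}_{\Lambda_m}(u,u)\gtrsim r^{-m}\|u\|_{l^2(\mathring{V}_{\Lambda_m})}^2$, a Poincaré inequality with constant independent of $m$, whence $\|A^{-1}\|_{l^2\to l^2}\lesssim r^m$; transferring to general $p$ via the exponential off-diagonal decay of $A^{-1}$ (a Combes–Thomas estimate using this spectral gap and the bounded geometry) gives the $l^p$ lower bound. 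Combining the two equivalences produces $\|u\|_{L^p(K)}\asymp r^{m(1+d_H/p)}\|H_{\Lambda_m}u\|_{l^p(\mathring{V}_{\Lambda_m})}$; the comparison of $l^p(\mathring{V}_{\Lambda_m})$ with $l^p(V_{\Lambda_m})$ then follows by applying the upper bound to the extra vertices of $V_{\Lambda_{m-1}}$, where $u=0$. \textbf{This uniform lower bound is the step I expect to be the main obstacle}: the $l^2$ Poincaré inequality is its clean core (one must check, for the given p.c.f. structure with partition $\Lambda_m$, that each newly created vertex reaches older vertices within bounded graph distance), and the passage to general $p$ needs a genuine decay estimate rather than a soft duality or interpolation argument.

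For (c), with $n\ge m$ I would split $I_p(u,r^n)^p$ into pairs $(x,y)$ with $R(x,y)<r^n$ lying in a common level-$m$ cell and pairs straddling two adjacent cells. For the within-cell part, changing variables by $F_w$ and using $R(F_wx',F_wy')=r_wR(x',y')$, $d\mu\circ F_w=\mu_w\,d\mu$, $\mu_w=r_w^{d_H}$ and $r_w\asymp r^m$, the contribution of cell $w$ is $\asymp r^{md_H}I_p(u\circ F_w,r^{n-m})^p$. Since $u\circ F_w\in\mathcal{H}_0$ and $\sigma<\mathscr{C}(p)$, Definition \ref{def31} (together with $\|u\circ F_w\|_{\Lambda^{p,\infty}_\sigma(K)}\asymp\|u\circ F_w\|_{L^p(K)}$ on the finite-dimensional $\mathcal{H}_0$) yields $I_p(u\circ F_w,r^{n-m})\lesssim r^{(n-m)\sigma d_W/2}\|u\circ F_w\|_{L^p(K)}$; summing over $w\in\Lambda_m$ and invoking $\sum_{w}\mu_w\|u\circ F_w\|_{L^p(K)}^p=\|u\|_{L^p(K)}^p$ gives exactly $I_p(u,r^n)\lesssim r^{(n-m)\sigma d_W/2}\|u\|_{L^p(K)}$. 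The straddling pairs are absorbed into this estimate: inserting a junction point $z\in V_{\Lambda_m}$ and using continuity of $u$, $|u(x)-u(y)|^p\lesssim|u(x)-u(z)|^p+|u(z)-u(y)|^p$, each term being dominated by a within-cell integral. This part is routine.
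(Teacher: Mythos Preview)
Your treatment of (a) is correct and matches the paper. For (b) and (c), however, two points deserve attention.

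\textbf{Part (b): you are working much harder than necessary.} The Combes--Thomas route would succeed, but the uniform $l^p$ lower bound is actually a \emph{local} finite-dimensional fact, not a spectral one. Because $u|_{V_{\Lambda_{m-1}}}=0$, every $x\in\mathring V_{\Lambda_m}$ lies in the interior of a unique cell $F_vK$ with $v\in\Lambda_{m-1}$, and all its $\Lambda_m$-neighbours lie in that same $F_vK$; hence the map $u|_{\mathring V_{\Lambda_m}}\mapsto H_{\Lambda_m}u|_{\mathring V_{\Lambda_m}}$ is \emph{block-diagonal} over the cells $\{F_vK:v\in\Lambda_{m-1}\}$. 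Each block has uniformly bounded size (each $F_vK$ contains only boundedly many $\Lambda_m$-cells, since the suffix length is bounded), and after scaling by $r^{m}$ there are only finitely many block types. Each block is invertible (it is a Dirichlet graph Laplacian with nonempty boundary), so the inverse has $l^p$-norm $\lesssim r^m$ uniformly in $m$ and $p$, with no decay estimate needed. The paper's proof is in fact even terser than this: it records only the equivalence $\|u\|_{L^p(K)}\asymp r^{md_H/p}\|u\|_{l^p(\mathring V_{\Lambda_m})}$ and leaves the passage to $H_{\Lambda_m}u$ implicit.

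\textbf{Part (c): the cross-term step is a genuine gap.} After the triangle inequality with the junction point $z$, what you must control is an integral of the form $\int_{\{x\in F_wK:\,R(x,z)\lesssim r^n\}}|u(x)-u(z)|^p\,d\mu(x)$ with $z$ \emph{fixed}. This is \emph{not} dominated by the within-cell double integral $I(1,w)^p$, which averages over a free endpoint $y$; attempting to pass from one to the other by averaging produces a recursion that does not close, and a telescoping argument via ball averages only converges when $\sigma>\mathscr L_1(p)$, so it fails on $\mathscr A_2$. The paper handles the cross term differently and with a stronger input: it uses the Lipschitz property of harmonic functions in the resistance metric (equivalently $\mathscr C(\infty)=2/d_W$, together with finite-dimensionality of $\mathcal H_0$) to obtain the pointwise bound $|u(x)-u(z)|\lesssim r^{n-m}\|u\circ F_w\|_{L^p(K)}$, and then integrates over the small region of measure $\asymp r^{nd_H}$ near $z$. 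This yields a cross-term contribution $\lesssim r^{(n-m)(1+d_H/p)}\|u\|_{L^p(K)}$, which is absorbed since $1+d_H/p=\mathscr L_2(p)\,d_W/2\ge\mathscr C(p)\,d_W/2>\sigma d_W/2$ by Proposition~\ref{prop33}. So the missing idea is precisely to invoke $\mathscr C(\infty)$ (pointwise Lipschitz control), not $\mathscr C(p)$, for the boundary contribution.
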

\begin{proof}
(a) is trivial since $u$ is harmonic on $\mathring{V}_{\Lambda_n}$ by definition. 	

(b) can be seen following the estimate
\[\begin{aligned}
\|u\|_{L^p(K)}&=\big(\sum_{w\in \Lambda_{m}}\mu_w\|u\circ F_w\|^p_{L^p(K)}\big)^{1/p}\asymp r^{md_H/p}\big(\sum_{w\in \Lambda_{m}}\|u\circ F_w\|^p_{L^p(K)}\big)^{1/p}\\
&\asymp r^{md_H/p}\big(\sum_{x\in \mathring{V}_{\Lambda_m}}|u(x)|^p\big)^{1/p}.
\end{aligned}\]

(c). For $m=0$, the result trivially follows from the definition of $\mathscr C(p)$. It suffices to consider $m\geq 1$ case. Choose $k$ such that $R(x,y)>r^{k+n}$ for any $x,y$ not in adjacent cells in $\{F_{\tau} K: {\tau}\in\Lambda_n\}$ and for any $n\geq 0$.  Now, for any  fixed $w\in \Lambda_m$, we consider the integral
\[\big(\int_{F_wK}r^{-nd_H}\int_{B_{r^{n}}(x)}|u(x)-u(y)|^pd\mu(y)d\mu(x)\big)^{1/p}\leq I(1,w)+I(2,w)\]
with
\[I(1,w):=(\int_{F_wK}r^{-nd_H}\int_{B_{r^{n}}(x)\cap F_wK}|u(x)-u(y)|^pd\mu(y)d\mu(x)\big)^{1/p},\]
\[I(2,w):=(\int_{F_wK}r^{-nd_H}\int_{B_{r^{n}}(x)\setminus F_wK}|u(x)-u(y)|^pd\mu(y)d\mu(x)\big)^{1/p}.\]

By the definition of $\mathscr{C}(p)$, noticing that $u$ is harmonic in $F_wK$, we have 
\[I(1,w)\leq Cr^{(n-m)\sigma d_W/2}\|u\|_{L^p(F_wK)}.\]
For $I(2,w)$, we can see that in fact $B_{r^n}(x)\setminus F_wK\neq \emptyset$ only if $x$ stays in a cell $F_\tau K$ with $\tau\in\Lambda_{n-k}$ which contains a point $z\in F_wV_0$. Without loss of generality, we assume that $n-k\geq m$, and  sum $I(2,w)$'s over $\Lambda_m$ to get
\[\begin{aligned}
\big(\sum_{w\in\Lambda_m} I(2,w)^p\big)^{1/p}&\leq \Big(\sum_{\tau,\tau'}\int_{x\in F_\tau K}\int_{y\in F_{\tau'}K}r^{-nd_H}|u(x)-u(y)|^pd\mu(y)d\mu(x)\Big)^{1/p}\\
&\leq C\big(\sum_{\tau}\sum_{z\in V_{\Lambda_m}\cap F_\tau K}\int_{x\in F_\tau K}|u(x)-u(z)|^pd\mu(x)\big)^{1/p}\\
&\leq C'\big(r^{nd_H}\sum_{w \in \Lambda_m} r^{(n-m)p}\|u\circ F_w\|^p_{L^p(K)}\big)^{1/p}\\
&=C'r^{(n-m)(1+d_H/p)}\big(\sum_{w \in \Lambda_m}\|u\|^p_{L^p(F_wK)}\big)^{1/p},
\end{aligned}\]
where $\tau,\tau'\in \{\tau''\in \Lambda_{n-k}:F_{\tau''}K\cap V_{\Lambda_m}\neq \emptyset\}$, and we require $\tau\neq \tau'$ with $F_\tau K\cap F_{\tau'}K\neq \emptyset$ in the first line.

Combining the estimates on $I(1,w)$'s and $I(2,w)$'s, and noticing that \[1+\frac{d_H}{p}\geq \mathscr{C}(p)\frac{d_W}{2}>\frac{\sigma d_W}{2}\]
by Proposition \ref{prop33}, (c) follows.  
\end{proof}

Now, we state the main result in this subsection.

\begin{theorem}\label{thm48}
Let $f\in C(K)$ with $f=\sum_{m=0}^\infty f_m$ and $f_m\in J_m,\forall m\geq 0$. For $1<p<\infty$, $1\leq q\leq \infty$,   for the claims

$(1)\text{ }f\in \Lambda^{p,q}_\sigma(K)$; $(2)\text{ }f\in \Lambda^{p,q}_{\sigma,(1)}(K)$; $(3)\text{ }\big\{r^{-m\sigma d_W/2}\|f_m\|_{L^p(K)}\big\}_{m\geq 0}\in l^q$,

\noindent we can say:

(a). If $\sigma>\mathscr{L}_1(p)$, we have $(1)\Longrightarrow (2)\Longrightarrow (3)$, with \[\|f\|_{\Lambda^{p,q}_\sigma(K)}\gtrsim \|f\|_{\Lambda^{p,q}_{\sigma,(1)}(K)}\gtrsim \big\|r^{-m\sigma d_W/2}\|f_m\|_{L^p(K)}\big\|_{l^q}.\] 

(b). If $\mathscr{L}_1(p)<\sigma<\mathscr{L}_2(p)$, we have $(1)\Longrightarrow (2)\iff (3)$ with 
\[\|f\|_{\Lambda^{p,q}_\sigma(K)}\gtrsim \|f\|_{\Lambda^{p,q}_{\sigma,(1)}(K)}\asymp \big\|r^{-m\sigma d_W/2}\|f_m\|_{L^p(K)}\big\|_{l^q}.\] 

(c). If $(\frac{1}{p},\sigma)\in \mathscr{A}_1$, we have $(1)\iff (2)\iff (3)$ with 
\[\|f\|_{\Lambda^{p,q}_\sigma(K)}\asymp \|f\|_{\Lambda^{p,q}_{\sigma,(1)}(K)}\asymp \big\|r^{-m\sigma d_W/2}\|f_m\|_{L^p(K)}\big\|_{l^q}.\] 
\end{theorem}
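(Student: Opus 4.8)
The plan is to establish the chain of implications by proving the three links $(1)\Rightarrow(2)$, $(2)\Rightarrow(3)$, and $(3)\Rightarrow(2)$ separately, each valid under the appropriate range of $\sigma$, and then to assemble parts (a), (b), (c) from these links together with the reverse implication $(2)\Rightarrow(1)$ which only survives in Region $\mathscr{A}_1$. The two easy estimates, Lemma \ref{lemma47}(b) and (c), are the workhorses: (b) converts the tent-function norms $\|f_m\|_{L^p(K)}$ into graph-Laplacian norms $\|H_{\Lambda_m}f_m\|_{l^p}$ up to the scaling factor $r^{m(1+d_H/p)}$, and (c) controls $I_p(f_m,r^n)$ for $n\ge m$ by a geometric factor $r^{(n-m)\sigma d_W/2}$ whenever $\sigma<\mathscr{C}(p)$.

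First I would prove $(2)\Rightarrow(3)$, which holds for all $\sigma>\mathscr{L}_1(p)$. The point is that on each $\mathring{V}_{\Lambda_m}$ the operator $H_{\Lambda_m}$ detects exactly the jump introduced by $f_m$, since by Lemma \ref{lemma47}(a) the tent functions $f_n$ with $n<m$ contribute nothing to $H_{\Lambda_m}$ restricted to $\mathring{V}_{\Lambda_m}$. Combined with Lemma \ref{lemma47}(b) this yields $\|f_m\|_{L^p(K)}\asymp r^{m(1+d_H/p)}\|H_{\Lambda_m}f\|_{l^p(\mathring{V}_{\Lambda_m})}$, and taking the weighted $l^q$-norm gives one direction of the equivalence in (b) and (c). Next, for $(3)\Rightarrow(2)$ — valid only below $\mathscr{L}_2(p)$ — I would bound $\|H_{\Lambda_m}f\|_{l^p(V_{\Lambda_m})}$ by summing the contributions of all $f_n$; the diagonal term $n=m$ is controlled as above, and the off-diagonal terms $n<m$ (where $f_n$ is harmonic and smooth near $V_{\Lambda_m}$) must be estimated using the normal-derivative/Hölder regularity that holds precisely when $(\frac1p,\sigma)$ lies below $\mathscr{L}_2$. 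This is where the second critical line enters, and I expect this off-diagonal summation to be the main obstacle: one needs the harmonic pieces $f_n$ to have well-defined, summably-decaying normal derivatives at the level-$m$ vertices so that the geometric series in $r^{(m-n)(\mathscr{L}_2(p)-\sigma)d_W/2}$ converges.

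For $(1)\Rightarrow(2)$, which also only requires $\sigma>\mathscr{L}_1(p)$, the strategy is to read off $H_{\Lambda_m}f$ from averages of $f$ over small balls and apply Lemma \ref{lemma45}(a): writing $H_{\Lambda_m}f(x)$ as a finite combination of differences $E_{B_{r^m}(x)}(f)-E_{B_{r^{m-1}}(x)}(f)$ at vertices $x\in V_{\Lambda_m}$, the covering bound $\sum 1_{B_{rt}(x_j)}\le\lambda$ (the bounded-overlap property of the vertex set, with $\lambda$ depending only on the fractal geometry) lets me control the $l^p$-sum by $r^{-md_H/p}I_p(f,cr^m)$. Taking the weighted $l^q$-norm and using the discretization \eqref{eqn42} then gives $\|f\|_{\Lambda^{p,q}_{\sigma,(1)}(K)}\lesssim\|f\|_{\Lambda^{p,q}_\sigma(K)}$.

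Finally, the reverse implication $(2)\Rightarrow(1)$, needed only for part (c) in Region $\mathscr{A}_1$, is where Lemma \ref{lemma47}(c) is used in full force: given the expansion $f=\sum_m f_m$, I would estimate $I_p(f,r^k)$ by splitting the sum over $f_m$ at $m=k$, bounding $I_p(f_m,r^k)\le Cr^{(k-m)\sigma d_W/2}\|f_m\|_{L^p(K)}$ for $m\le k$ via Lemma \ref{lemma47}(c) and using the crude bound $I_p(f_m,r^k)\lesssim\|f_m\|_{L^p(K)}$ for $m>k$, then applying Minkowski's inequality to the resulting two geometric convolutions exactly as in the proof of Proposition \ref{prop44}. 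Here the constraint $\sigma<\mathscr{C}(p)$ guarantees the geometric factors decay on both sides so the convolutions are bounded; this forces us to stay within $\mathscr{A}_1$ and explains why $(2)\Rightarrow(1)$ fails in the regime of part (b). Assembling: (a) follows from $(1)\Rightarrow(2)\Rightarrow(3)$; (b) adds $(3)\Rightarrow(2)$ under $\sigma<\mathscr{L}_2(p)$; and (c) adds $(2)\Rightarrow(1)$ under $(\frac1p,\sigma)\in\mathscr{A}_1$.
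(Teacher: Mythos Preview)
Your outline matches the paper's structure, but two of the links have gaps as you describe them, and a third is unnecessarily complicated.

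For $(1)\Rightarrow(2)$: the quantity $H_{\Lambda_m}f(x)$ is built from \emph{point values} $f(x),f(y)$ for $y\sim_m x$, and these cannot be recovered as a finite combination of ball-average differences at a single pair of scales. What the paper does is telescope each point value as $f(x)=E_{B_{r^m}(x)}(f)+\sum_{n\ge m}\bigl(E_{B_{r^{n+1}}(x)}(f)-E_{B_{r^n}(x)}(f)\bigr)$ and then apply Lemma~\ref{lemma45}(a) termwise, obtaining
\[
\Bigl(\sum_{x\sim_m y}|f(x)-f(y)|^p\Bigr)^{1/p}\lesssim r^{-md_H/p}I_p(f,r^{m-k})+\sum_{n\ge m}r^{-nd_H/p}I_p(f,r^n).
\]
It is the infinite tail, not your single-scale term $r^{-md_H/p}I_p(f,cr^m)$, that forces the hypothesis $\sigma>\mathscr{L}_1(p)$: after weighting by $r^{-m\sigma d_W/2}$ and taking $l^q$, the tail becomes a convolution with $r^{n(\sigma d_W/2-d_H/p)}$, convergent exactly when $\sigma d_W/2>d_H/p$. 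There is a parallel slip in your sketch of $(3)\Rightarrow(1)$: invoking Lemma~\ref{lemma47}(c) with the target exponent $\sigma$ itself gives, on the $m\le k$ side of your split, the factor $r^{(k-m)\sigma d_W/2}\cdot r^{-k\sigma d_W/2}=r^{-m\sigma d_W/2}$, so the Minkowski convolution has ratio $1$ and diverges. One must fix $\eta\in(\sigma,\mathscr{C}(p))$ and apply Lemma~\ref{lemma47}(c) with exponent $\eta$, so the ratio becomes $r^{(\eta-\sigma)d_W/2}<1$.

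For $(3)\Rightarrow(2)$ you over-engineer: no H\"older or normal-derivative regularity is needed, and the off-diagonal sum is not an obstacle. Since each $f_n\in J_n$ is piecewise harmonic at every finer level, one has the exact identity $H_{\Lambda_m}f_n|_{V_{\Lambda_m}\setminus V_{\Lambda_n}}\equiv 0$ and $H_{\Lambda_m}f_n|_{V_{\Lambda_n}}=H_{\Lambda_n}f_n$, hence $\|H_{\Lambda_m}f_n\|_{l^p(V_{\Lambda_m})}=\|H_{\Lambda_n}f_n\|_{l^p(V_{\Lambda_n})}\asymp r^{-n(1+d_H/p)}\|f_n\|_{L^p(K)}$ for all $m\ge n$, by Lemma~\ref{lemma47}(b). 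Summing over $n\le m$ then yields a geometric convolution with ratio $r^{1+d_H/p-\sigma d_W/2}$; the constraint $\sigma<\mathscr{L}_2(p)$ is nothing more than the condition that this ratio be $<1$ --- a pure scaling condition, not a regularity one.
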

\begin{proof}
(a). We first prove $(1)\Longrightarrow (2)$. We follow the conventional notation to denote $x\sim_m y$ if $x,y\in F_wV_0$ for some $w\in \Lambda_m$. We fix $k>0$ so that $B_{r^m}(y)\subset B_{r^{m-k}}(x)$ for any $x\sim_m y$ and $m\geq 0$. Then we can see 
\[\big(\sum_{x\sim_m y}|E_{B_{r^m}(x)}(f)-E_{B_{r^m}(y)}(f)|^p\big)^{1/p}\lesssim r^{-md_H/p}I_p(f,r^{m-k}).\]
Since each vertex $x\in V_{\Lambda_m}$ is of bounded degree, by writing \[f(x)=E_{B_{r^m}(x)}(f)+\sum_{n=m}^\infty\big(E_{B_{r^{n+1}}(x)}(f)-E_{B_{r^n}(x)}(f)\big),\] we  apply Lemma \ref{lemma45} (a) to see that 
\[\big(\sum_{x\sim_m y}|f(x)-f(y)|^p\big)^{1/p}\lesssim r^{-md_H/p}I_p(f,r^{m-k})+\sum_{n=m}^\infty r^{-nd_H/p}I_p(f,r^n).\]
Write $\tilde{H}_{\Lambda_m}f=r^{m(1+d_H/p)}H_{\Lambda_m}f$ for convenience. We then have
\[
\begin{aligned}
\|\tilde{H}_{\Lambda_m}f\|_{l^p(V_{\Lambda_m})}&\lesssim  I_p(f,r^{m-k})+\sum_{n=m}^\infty r^{(m-n)d_H/p}I_p(f,r^n)\\
&= {I_p(f,r^{m-k})+}\sum_{n=0}^\infty r^{-nd_H/p}I_p(f,r^{m+n}).
\end{aligned}
\]
Noticing that 
\[\begin{aligned}
\big\|r^{-m\sigma d_W/2}\sum_{n=0}^\infty r^{-nd_H/p}I_p(f,r^{m+n})\big\|_{l^q}&=\big\|\sum_{n=0}^\infty r^{n(\sigma d_W/2-d_H/p)}r^{-(m+n)\sigma d_W/2}I_p(f,r^{m+n})\big\|_{l^q}\\
&\leq \sum_{n=0}^\infty r^{n(\sigma d_W/2-d_H/p)}\cdot\big\|r^{-m\sigma d_W/2}I_p(f,r^{m})\big\|_{l^q}\\
&\lesssim \big\|r^{-m\sigma d_W/2}I_p(f,r^{m})\big\|_{l^q},
\end{aligned}\]
since we assume $\sigma>\mathscr{L}_1(p)=\frac{d_S}{p}$, the claim follows.\vspace{0.2cm}

$(2)\Longrightarrow (3)$ is easy. We can see that $H_{\Lambda_m}\big(\sum_{n=0}^{m-1}f_m\big)|_{\mathring{V}_{\Lambda_m}}\equiv 0$ by Lemma \ref{lemma47} (a), and $(\sum_{n=0}^m f_m)|_{V_{\Lambda_m}}=f|_{V_{\Lambda_m}}$. Thus $H_{\Lambda_m }f|_{\mathring{V}_{\Lambda_m}}=H_{\Lambda_m}f_m|_{\mathring{V}_{\Lambda_m}}$ and then
\[\|f_m\|_{L^p(K)}\asymp \|\tilde{H}_{\Lambda_m}{f}\|_{l^p(\mathring{V}_{\Lambda_m})}\]
{using Lemma \ref{lemma47} (b)}. The claim follows immediately. \vspace{0.2cm}

(b). It remains to show $(3)\Longrightarrow (2)$. By the definition of $\mathcal{E}_{\Lambda_m}$ and the fact that $f_m$ is harmonic in $F_wK$ for each $w\in \Lambda_m$, we can see that 
\[\|H_{\Lambda_n}f_m\|_{l^p(V_{\Lambda_n})}=\|H_{\Lambda_m}f_m\|_{l^p(V_{\Lambda_m})}\asymp r^{-m(1+d_H/p)}\|f_m\|_{L^p(K)},\forall n\geq m.\]
Also, $\|H_{\Lambda_n}f_m\|_{l^p(V_{\Lambda_n})}=0$ for $n<m$. Thus, 
\[\begin{aligned}
\big\|r^{m(-\sigma d_W/2+1+d_H/p)}\|H_{\Lambda_m}f\|_{l^p(V_{\Lambda_m})}\big\|_{l^q}&\lesssim  \big\|\sum_{n=0}^mr^{(m-n)(1+d_H/p)-m\sigma d_W/2}\|f_n\|_{L^p(K)}\big\|_{l^q}\\
&=\big\|\sum_{n=0}^mr^{n(1+d_H/p)-m\sigma d_W/2}\|f_{m-n}\|_{L^p(K)}\big\|_{l^q}\\
&\lesssim \sum_{n=0}^\infty r^{n(1+d_H/p-\sigma d_W/2)}\cdot\big\|r^{-m\sigma d_W/2}\|f_m\|_{L^p(K)}\big\|_{l^q}.
\end{aligned}\]
The claim follows since $1+\frac{d_H}{p}-\frac{\sigma d_W}{2}>0$ by $\sigma<\mathscr L_2(p)$.\vspace{0.2cm}

(c). We only need to prove $(3)\Longrightarrow (1)$ when $\sigma<\mathscr{C}(p)$. We fix $\sigma<\eta<\mathscr{C}(p)$, and by using Lemma \ref{lemma47} (c) we can see 
\[I_p(f_m,r^n)\lesssim r^{(n-m)\eta d_W/2}\|f_m\|_{L^p(K)},\forall n\geq m\geq 0.\]
The claim then follows by applying the above estimate and Lemma \ref{lemma43} (b) to get similar estimates as (\ref{eqn43}), (\ref{eqn44}) and (\ref{eqn45}), using a same proof as the second half of Proposition \ref{prop44}. 
\end{proof}

We end this section with the following theorem, whose proof will be completed in Section 5 and Section 6. 

\begin{theorem}\label{thm49}
	For $1<p<\infty$, $1\leq q\leq \infty$ and $\mathscr{L}_1(p)<\sigma<2$, we have  $B^{p,q}_\sigma(K)=\Lambda^{p,q}_{\sigma,(1)}(K)$ with $\|\cdot\|_{\Lambda^{p,q}_{\sigma,(1)}(K)}\asymp \|\cdot\|_{B^{p,q}_\sigma(K)}$.
\end{theorem}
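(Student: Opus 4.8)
The plan is to identify both spaces with a single real interpolation space. By Lemma \ref{lemma26} with $\sigma_1=2$, we have $B^{p,q}_\sigma(K)=\big(L^p(K),H^p_2(K)\big)_{\sigma/2,q}$, where $H^p_2(K)=dom_{L^p(K)}\Delta_N$. Thus it suffices to prove that $\Lambda^{p,q}_{\sigma,(1)}(K)$ coincides, with equivalent norm, with this interpolation space for $\mathscr{L}_1(p)<\sigma<2$; equivalently, to establish the two continuous inclusions $\Lambda^{p,q}_{\sigma,(1)}(K)\subset B^{p,q}_\sigma(K)$ and $B^{p,q}_\sigma(K)\subset \Lambda^{p,q}_{\sigma,(1)}(K)$. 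Since $\sigma>\mathscr{L}_1(p)$, every function in either space is continuous by Lemma \ref{lemma45}(b), so the vertex evaluations defining $H_{\Lambda_m}f$ make sense; I will write $\tilde{H}_{\Lambda_m}f=r^{m(1+d_H/p)}H_{\Lambda_m}f$, so that $\|f\|_{\Lambda^{p,q}_{\sigma,(1)}(K)}\asymp \|f\|_{L^p(K)}+\big\|r^{-m\sigma d_W/2}\|\tilde H_{\Lambda_m}f\|_{l^p(V_{\Lambda_m})}\big\|_{l^q}$.

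The core of the argument is the endpoint identification at $\sigma=2$, namely $\Lambda^{p,\infty}_{2,(1)}(K)=H^p_2(K)=dom_{L^p(K)}\Delta_N$ with equivalent norms. Here the exponent in the definition reduces to $-d_H/p'$, so the statement is $\sup_m r^{-md_H/p'}\|H_{\Lambda_m}f\|_{l^p(V_{\Lambda_m})}\asymp \|f\|_{L^p(K)}+\|\Delta_N f\|_{L^p(K)}$. The mechanism is the discrete Gauss--Green identity: if $P_mf$ denotes the interpolation of $f|_{V_{\Lambda_m}}$ that is harmonic on each cell $F_wK$, $w\in\Lambda_m$, then $\mathcal{E}(P_mf,g)=-\langle H_{\Lambda_m}f,g\rangle_{l^2(V_{\Lambda_m})}$ for all $g\in dom\mathcal{E}$. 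First I would show that for $f\in dom_{L^p(K)}\Delta_N$ the renormalized graph Laplacians converge to $\Delta_N f$, with the interior vertices giving $r^{-md_H/p'}\|H_{\Lambda_m}f\|_{l^p(\mathring{V}_{\Lambda_m})}\lesssim \|\Delta_N f\|_{L^p(K)}$ (comparing the $l^p$ sum over cells, weighted by $r^{md_H}$, with $\int_K|\Delta_N f|^pd\mu$), while the boundary vertices $V_0$ contribute the normal derivatives $\partial_n f$; the renormalization forces these to vanish, so that uniform boundedness recovers exactly the Neumann condition. Conversely, uniform boundedness of $r^{-md_H/p'}\|H_{\Lambda_m}f\|_{l^p(V_{\Lambda_m})}$ produces, via the identity above and a weak-compactness argument, a function $u\in L^p(K)$ with $\mathcal{E}(f,g)=-\int_K ug\,d\mu$ for all $g\in dom\mathcal{E}$, i.e. $f\in dom_{L^p(K)}\Delta_N$.

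With the endpoint in hand, the two inclusions follow by $K$-functional estimates combined with the discrete Hardy (geometric-series) arguments already used in Proposition \ref{prop44} and Theorem \ref{thm48}. For $B^{p,q}_\sigma(K)\subset\Lambda^{p,q}_{\sigma,(1)}(K)$, I would, for each level $m$, split $f=u_m+v_m$ using the tent decomposition $f=\sum_n f_n$ (with $f_n\in J_m$ at level $n$) --- taking $v_m$ a smoothed version of $\sum_{n\le m}f_n$ landing in $dom_{L^p(K)}\Delta_N$ and $u_m=f-v_m$ --- and bound $\|\tilde H_{\Lambda_m}f\|_{l^p(V_{\Lambda_m})}$ by $\|\tilde H_{\Lambda_m}v_m\|_{l^p}$ (controlled through the endpoint comparison by $r^{md_W}\|\Delta_N v_m\|_{L^p(K)}$) plus $\|\tilde H_{\Lambda_m}u_m\|_{l^p}$ (controlled through the scale-adapted estimates of Lemma \ref{lemma47}(c) and Lemma \ref{lemma43}, \emph{not} by a naive $L^p$-bound, which would diverge geometrically), and then sum in $l^q$ against the weight $r^{-m\sigma d_W/2}$ exactly as in $(\ref{eqn44})$--$(\ref{eqn45})$. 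For the reverse inclusion $\Lambda^{p,q}_{\sigma,(1)}(K)\subset B^{p,q}_\sigma(K)$, I would bound the $K$-functional $K(r^{md_W},f;L^p,H^p_2)$ from above, at each scale $t\sim r^{md_W}$, by choosing the $H^p_2$-component to be a smoothing of the low-frequency part and estimating its $\Delta_N$-norm through the endpoint comparison in the other direction, while the $L^p$-component is the high-frequency tail; a discrete Hardy inequality then yields $\big\|r^{-m\sigma d_W/2}K(r^{md_W},f)\big\|_{l^q}\lesssim \|f\|_{\Lambda^{p,q}_{\sigma,(1)}(K)}$.

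The main obstacle is precisely this scale-uniform, two-sided comparison of the discrete graph Laplacian $H_{\Lambda_m}$ with the continuous Neumann Laplacian $\Delta_N$. Two points require care. First, the rough ($L^p$-endpoint) component cannot be handled by any naive estimate of its graph Laplacian, so the decomposition must be adapted to each scale $m$ through the piecewise-harmonic/tent structure, with the summation balanced by the Hardy-type inequalities. Second, one must track the boundary normal-derivative terms carefully: it is exactly these boundary contributions, captured by $H_{\Lambda_m}$ at vertices of $V_0$ and by the Neumann condition built into $dom_{L^p(K)}\Delta_N$, that make $B^{p,q}_\sigma(K)=\Lambda^{p,q}_{\sigma,(1)}(K)$ persist up to $\sigma<2$ --- beyond $\mathscr{L}_2(p)$, where the purely size-based tent-coefficient characterization of Theorem \ref{thm48} no longer suffices. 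Finally, the ceiling $\sigma<2$ is sharp for this scale, being the second-order endpoint $H^p_2(K)=dom_{L^p(K)}\Delta_N$.
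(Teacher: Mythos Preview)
Your plan for the inclusion $\Lambda^{p,q}_{\sigma,(1)}(K)\subset B^{p,q}_\sigma(K)$ is sound and is essentially the paper's Proposition~\ref{prop54}. The endpoint inclusion $H^p_2(K)\subset\Lambda^{p,\infty}_{2,(1)}(K)$ is also correct (and is exactly what the paper proves via the Gauss--Green identity $H_{\Lambda_m}f(x)=\int_{U_{x,m}}\psi_{x,m}\Delta f\,d\mu$); the full endpoint \emph{equality} you propose is plausible but in fact not needed.

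The genuine gap is in your argument for $B^{p,q}_\sigma(K)\subset\Lambda^{p,q}_{\sigma,(1)}(K)$. Your proposed splitting $f=u_m+v_m$ is built from the tent expansion $f=\sum_n f_n$, and every estimate you invoke on either piece (the $\|\Delta_N v_m\|_{L^p}$ bound for the smoothed low part, and the Lemma~\ref{lemma47}/\ref{lemma43} bounds for the high part) is ultimately a bound in terms of the coefficients $\|f_n\|_{L^p(K)}$. But by Lemma~\ref{lemma47}(b) one has $\|f_n\|_{L^p(K)}\asymp\|\tilde H_{\Lambda_n}f\|_{l^p(\mathring V_{\Lambda_n})}$, which is precisely the $\Lambda^{p,q}_{\sigma,(1)}$ seminorm you are trying to control --- the argument is circular. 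Nor can you escape by replacing the tent splitting with a direct $K$-functional decomposition $f=a_m+b_m$ coming from $(L^p,H^p_2)$: the obstruction is that $f\mapsto H_{\Lambda_m}f$ requires point evaluations and is \emph{not} bounded on $L^p(K)$, so there is no usable estimate for $\|\tilde H_{\Lambda_m}a_m\|_{l^p}$ in terms of $\|a_m\|_{L^p}$. Thus neither the endpoint identification nor interpolation from the pair $(L^p,H^p_2)$ alone can close this direction.

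The paper resolves this by supplying a second, independent anchor. Using a duality argument (Lemma~\ref{lemma63} extends $\mathcal{E}$ to a continuous bilinear form on $B^{p,q}_\sigma\times B^{p',q'}_{2-\sigma}$), Proposition~\ref{prop64} first proves the equality $B^{p,q}_\sigma(K)=\Lambda^{p,q}_{\sigma,(1)}(K)$ throughout the middle strip $\mathscr{L}_1(p)<\sigma<\mathscr{L}_2(p)$: one pairs $f$ against tent test functions $g$ and reads off $\|f_m\|_{L^p}$ from $\tilde{\mathcal E}(f,g)$ via Claim~2/Claim~3 there. Only then does the paper interpolate, via Lemma~\ref{lemma62}(b), between this already-established middle range and the endpoint inclusion $H^p_2\subset\Lambda^{p,\infty}_{2,(1)}$ to reach all of $\mathscr{L}_1(p)<\sigma<2$. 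In short, the missing idea in your proposal is the duality step of Proposition~\ref{prop64}; without it (or a substitute giving a priori control of the tent coefficients from $\|f\|_{B^{p,q}_\sigma}$) the forward inclusion cannot be obtained by the endpoint-plus-Hardy scheme you outline.
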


\section{Embedding $\Lambda^{p,q}_\sigma(K)$ into $B^{p,q}_\sigma(K)$}
In this section, we will use the $J$-method of real interpolation to prove that $\Lambda^{p,q}_\sigma(K)\subset B^{p,q}_\sigma(K)$ for $0<\sigma<2$. This will not involve the critical curve $\mathscr{C}$.

We will use the following fact about the real interpolation, which is obvious from the $J$-method. Readers may find details of the $J$-method in the book \cite{interpolation}.

\begin{lemma}\label{lemma51}
	Let $\bar{X}:=(X_0,X_1)$ be an interpolation couple of Banach spaces, $1\leq q\leq \infty$, $0<\theta<1$, and $\bar{X}_{\theta,q}:=(X_0,X_1)_{\theta,q}$ be the real interpolation space. For  each $x\in X_0\cap X_1$ and $\lambda>0$, write \[J(\lambda,x)=\max\big\{\|x\|_{X_0},\lambda\|x\|_{X_1}\big\}.\]
	If $\{g_m\}_{m\geq 0}\subset X_0\cap X_1$ is a sequence with $\big\{\lambda^{-m\theta}J(\lambda^{m},g_m)\big\}_{m\geq 0}\in l^q$ and $\lambda>0$, then 
	\[g=\sum_{m=0}^\infty g_m\in \bar X_{\theta,q}\]
	with $\|g\|_{\bar{X}_{\theta,q}}\lesssim \big\|\lambda^{-m\theta}J(\lambda^{m},g_m)\big\|_{l^q}$.
\end{lemma}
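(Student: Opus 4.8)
The plan is to build an explicit continuous representing function from the discrete data $\{g_m\}$ and then invoke the equivalence of the $J$- and $K$-descriptions of the real interpolation space. Recall that, by the fundamental equivalence theorem of real interpolation (see \cite{interpolation}), an element lies in $\bar X_{\theta,q}$ exactly when it admits a representation $g=\int_0^\infty u(t)\,dt/t$, convergent in $X_0+X_1$, with $u(t)\in X_0\cap X_1$, and its norm is comparable to the infimum of $\big(\int_0^\infty(t^{-\theta}J(t,u(t)))^q\,dt/t\big)^{1/q}$ over all such representations (with the usual supremum modification when $q=\infty$). I would assume $\lambda>1$, which is exactly the regime relevant to the tent/Haar decompositions used in Sections 5 and 6; the general case $\lambda>0$ reduces to this.

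First I would set $u(t)=(\log\lambda)^{-1}g_m$ for $t\in[\lambda^m,\lambda^{m+1})$, $m\geq 0$, and $u(t)=0$ for $0<t<1$. Since $\int_{\lambda^m}^{\lambda^{m+1}}dt/t=\log\lambda$, this formally gives $\int_0^\infty u(t)\,dt/t=\sum_{m\geq 0}g_m$. To legitimize the representation I must check the series converges in $X_0+X_1$. For this I would use $\|g_m\|_{X_0+X_1}=K(1,g_m)\leq\|g_m\|_{X_1}\leq\lambda^{-m}J(\lambda^m,g_m)$, where the last step holds because $\lambda^m\|g_m\|_{X_1}\leq J(\lambda^m,g_m)$. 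Since $\lambda^{-m\theta}J(\lambda^m,g_m)$ is bounded (it lies in $l^q\subset l^\infty$) and $\lambda>1$, $\theta<1$, the bound $\|g_m\|_{X_0+X_1}\leq\lambda^{-m(1-\theta)}\big(\lambda^{-m\theta}J(\lambda^m,g_m)\big)$ is summable, so $\sum_{m\geq 0}g_m$ converges absolutely in $X_0+X_1$ and equals $g$.

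Next I would estimate the $J$-functional of $u$ block by block. The map $t\mapsto J(t,x)$ is nondecreasing, and $J(\lambda^{m+1},x)\leq\lambda J(\lambda^m,x)$ (using $\lambda>1$). Hence for $t\in[\lambda^m,\lambda^{m+1})$ one has $t^{-\theta}J(t,g_m)\leq\lambda^{-m\theta}\cdot\lambda\,J(\lambda^m,g_m)$, so that
\[
\int_{\lambda^m}^{\lambda^{m+1}}\big(t^{-\theta}J(t,u(t))\big)^q\frac{dt}{t}\leq\frac{\lambda^q}{(\log\lambda)^{q-1}}\big(\lambda^{-m\theta}J(\lambda^m,g_m)\big)^q.
\]
Summing over $m\geq 0$ yields $\big(\int_0^\infty(t^{-\theta}J(t,u(t)))^q\,dt/t\big)^{1/q}\lesssim\big\|\lambda^{-m\theta}J(\lambda^m,g_m)\big\|_{l^q}$, with a constant depending only on $\lambda,\theta,q$. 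By the equivalence theorem this controls $\|g\|_{\bar X_{\theta,q}}$, which is the assertion; the case $q=\infty$ is identical with suprema replacing integrals.

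I expect no genuine obstacle here, since this is the standard discrete $J$-method lemma and, as the authors note, essentially immediate from the $J$-construction. The only points requiring care are the justification of the convergence of $\sum g_m$ in $X_0+X_1$ (handled above via the $\lambda^{-m(1-\theta)}$ decay) and the harmless reduction to $\lambda>1$, which is the only case actually invoked later in the paper.
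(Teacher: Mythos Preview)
Your proof is correct and is exactly the standard $J$-method argument the paper has in mind; the paper itself gives no proof beyond the sentence ``which is obvious from the $J$-method'' and a reference to \cite{interpolation}, so you have simply supplied the details. One small factual slip: in the paper's application (Proposition~\ref{prop53}) the choice is $\lambda=r^{d_W}$ with $0<r<1$, so the regime actually used is $0<\lambda<1$, not $\lambda>1$; your reduction remark still covers this (swap the roles of $X_0,X_1$ and replace $\theta$ by $1-\theta$, or equivalently use the $X_0$-norm to get absolute convergence of $\sum g_m$ via $\|g_m\|_{X_0}\le J(\lambda^m,g_m)\le \lambda^{m\theta}\cdot\big(\lambda^{-m\theta}J(\lambda^m,g_m)\big)$), but the parenthetical claim about which case is ``relevant'' should be corrected.
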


We start with the following lemma.
\begin{lemma}\label{lemma52}
	For $1<p<\infty$, $f\in L^p(K)$ and $m\geq 0$, there is a function $u_m$ in $H^p_2(K)$ such that \[E[u_m|\Lambda_m]=E[f|\Lambda_m],\]
	and in addition, 
	\[\begin{cases}
	\big\|u_m-E[f|\Lambda_m]\big\|_{L^p(K)}\leq CI_{p}(f,r^{m-k}),\\
	\|\Delta u_m\|_{L^p(K)}\leq Cr^{-md_W}I_{p}(f,r^{m-k}),
	\end{cases}\]
where $k\in \mathbb{N}$ and $C>0$ are constants independent of $f$ and $m$.
\end{lemma}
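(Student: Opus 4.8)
The plan is to build $u_m$ cell by cell from a single fixed template on $K$, letting the self-similar scaling of $\Delta$ generate the factor $r^{-md_W}$. The key scaling rule is this: if $v\in H^p_2(K)$ and $w\in\Lambda_m$, then on the cell $F_wK$ the self-similar identity (\ref{eq21}) together with the scaling of $\mu$ gives $\Delta(v\circ F_w)=r_w\mu_w\,(\Delta v)\circ F_w$, while $r_w\asymp r^m$, $\mu_w\asymp r^{md_H}$ and hence $r_w\mu_w\asymp r^{md_W}$. This is the only place the exponent $d_W$ enters, and it produces the $r^{-md_W}$ in the second estimate.

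Next I would construct the template. Fix once and for all functions $\eta_1,\dots,\eta_{\#V_0},\psi\in H^p_2(K)$ that are constant in a neighborhood of $V_0$, with $\eta_i\equiv\delta_{ij}$ near $q_j\in V_0$, with $\psi\equiv 0$ near $V_0$ and $\int_K\psi\,d\mu\neq 0$; such locally constant functions in the $L^p$-domain of the Neumann Laplacian exist by the standard smooth-bump construction. For boundary data $b\colon V_0\to\mathbb C$ and a target mean $a$, set $T(b,a)=\sum_i b_i\eta_i+c\,\psi$, where $c$ is chosen so that $\int_K T(b,a)\,d\mu=a$. Then $T$ is linear in $(b,a)$, lies in $H^p_2(K)$, is constant near each $q_i$ with value $b_i$, and obeys $\|T(b,a)\|_{L^p(K)}+\|\Delta T(b,a)\|_{L^p(K)}\lesssim |b|+|a|$.

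I would then assemble and estimate. Assign to each $x\in V_{\Lambda_m}$ the single value $u_m(x)$ equal to the average of $E_w(f)$ over the cells $w\in\Lambda_m$ containing $x$, and on each $F_wK$ let $u_m\circ F_w=T\big(x\mapsto u_m(F_wx),\,E_w(f)\big)$. Since $T$ is flat near $V_0$, the pieces agree in a full neighborhood of every vertex of $V_{\Lambda_m}$ (with a common constant value); hence $u_m$ is continuous, has vanishing normal derivatives along $V_{\Lambda_m}$ — so no atom of $\Delta u_m$ sits on this finite set and the Neumann condition on $V_0$ holds — and therefore $u_m\in H^p_2(K)$, with $\Delta u_m$ patched from the cellwise Laplacians. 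By construction $E[u_m|\Lambda_m]=E[f|\Lambda_m]$. On $F_wK$ the deviation $u_m-E[f|\Lambda_m]$ is $T$ with mean zero and boundary data of size $\lesssim\max_{w'\sim w}|E_{w'}(f)-E_w(f)|$, so the template bound and the scaling rule give $\|u_m-E[f|\Lambda_m]\|_{L^p(F_wK)}\lesssim \mu_w^{1/p}\max_{w'\sim w}|E_{w'}(f)-E_w(f)|$ and $\|\Delta u_m\|_{L^p(F_wK)}\lesssim \mu_w^{1/p}(r_w\mu_w)^{-1}\max_{w'\sim w}|E_{w'}(f)-E_w(f)|$. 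Taking $p$-th powers, summing over $w\in\Lambda_m$ (neighbors have bounded overlap, as in (\ref{eqn41})), and using $r^{md_H}\sum_{w\sim w'}|E_w(f)-E_{w'}(f)|^p\lesssim I_p(f,r^{m-k})^p$ — the averaging estimate from the proof of Lemma \ref{lemma43} — yields the two claimed bounds.

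The main obstacle is the template step together with the gluing: producing a genuine $H^p_2(K)$ function that is flat near the boundary, and verifying that gluing such pieces leaves $\Delta u_m$ free of any singular part on the finite set $V_{\Lambda_m}$. Once flatness near the vertices is secured, continuity, the Neumann condition, and the absence of atoms all follow simultaneously, and the rest reduces to routine scaling together with the already-available control of cell-average differences by $I_p(f,r^{m-k})$.
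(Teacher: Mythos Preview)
Your proposal is correct and follows the same overall scheme as the paper --- assign to each junction vertex the average of the neighboring cell means, build $u_m$ cell by cell with prescribed means, then use the scaling of $\Delta$ together with the estimate $r^{md_H}\sum_{w\sim w'}|E_w(f)-E_{w'}(f)|^p\lesssim I_p(f,r^{m-k})^p$. The implementation differs: the paper keeps $u_m$ equal to the step function $E[f|\Lambda_m]$ outside small neighborhoods $U_{x,m'}$ of $V_{\Lambda_m}$ taken at a strictly deeper level $m'$, and only solves a local problem with Neumann data and prescribed mean inside each such neighborhood; you instead run a single fixed template $T(b,a)$ over the entire level-$m$ cell, using flatness of $T$ near $V_0$ to make the pieces match. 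Your version is arguably cleaner (no auxiliary level $m'$), while the paper's version makes the deviation $u_m-E[f|\Lambda_m]$ supported on a set of small measure.

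One small gap to close: when you say ``the deviation $u_m-E[f|\Lambda_m]$ is $T$ with mean zero and boundary data of size $\lesssim\max_{w'\sim w}|E_{w'}(f)-E_w(f)|$'', you are using $T(b,c_w)-c_w=T(b-c_w\mathbf 1,\,0)$. This requires $T(c\mathbf 1,c)=c$, which in your construction amounts to $\sum_i\eta_i\equiv 1$. Without it the template bound only gives $\lesssim|b|+|c_w|$, not $\lesssim|b-c_w\mathbf 1|$. The fix is immediate: replace $\eta_1$ by $1-\sum_{i\ge 2}\eta_i$ (still in $H^p_2(K)$, still equal to $\delta_{1j}$ near $q_j$), and then the affine identity holds and both estimates follow exactly as you wrote.
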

\begin{proof}
For convenience, we write $E[f|\Lambda_m]=\sum_{w\in \Lambda_m} c_w1_{F_wK}$ with $c_w\in\mathbb{C}$. 

For each $x\in V_{\Lambda_m}$ and $n\geq m$, we write 
$U_{x,n}=\bigcup\{F_{w'}K: x\in F_{w'}K, w'\in \Lambda_{n}\}$,
and take $m'\geq m$ to be the smallest one, such that 
\[\# V_{\Lambda_m}\cap U_{x,m'}\leq 1, \quad U_{x,m'}\cap U_{y,m'}=\emptyset, \quad \forall x,y\in V_{\Lambda_{m}}.\]
Clearly, the difference $m'-m$ is bounded for all $m$.

Let $U_{m'}=\bigcup_{x\in V_{\Lambda_m}}U_{x,m'}$. We define $u_m$ as follows. 

\noindent 1). For $x\in K\setminus U_{m'}$, we define $u_m(x)=E[f|\Lambda_m](x)$.

\noindent 2). For $x\in V_{\Lambda_m}$, we let $M_x=\#\{w\in \Lambda_m:x\in F_wK\}$ and define 
\[u_m(x)=\frac{1}{\# M_x}\sum_{w\in\Lambda_m, F_wK\ni x}c_w.\]

\noindent 3). It remains to construct $u_m$ on each $U_{x,m'}$ with $x\in V_{\Lambda_m}$. In this case, for each $F_{w'}K\subset U_{x,m'}$ with $w'\in \Lambda_{m'}$, we have already defined its boundary values. For $u_m$ in  $F_{w'}K$, additionally we require that $u_m$ satisfies the Neumann boundary condition on $F_{w'}V_0$, and $E_{w'}(u_m)=c_w$ for $w$ to be the word in $\Lambda_m$ such that $F_{w'}K\subset F_wK$. It is easy to see the existence of such a function locally on $F_{w'}K$, and the following estimate can be achieved by scaling,
\[\begin{cases}
\|u_m-c_w\|_{L^p(F_{w'}K)}\lesssim r^{md_H/p}\big|u_m(x)-c_w\big|,\\
\|\Delta u_m\|_{L^p(F_{w'}K)}\lesssim r^{m(d_H/p-d_W)}\big|u_m(x)-c_w\big|.
\end{cases}\]

With (1),(2) and (3), we obtain a function $u_m\in H^p_2(K)$ such that $E[u_m|\Lambda_m]=E[f|\Lambda_m]$. It remains to show the desired estimates for $u_m$. First, we have 
\[\begin{aligned}
\big\|u_m-E[f|\Lambda_m]\big\|_{L^p(K)}&=\big(\sum_{w'}\|u_m-c_w\|^p_{L^p(F_{w'}K)}\big)^{1/p}\\
&\lesssim \big(r^{md_H}\sum_{w'}|u_m(x_{w'})-c_w|^p\big)^{1/p}
\lesssim  \big(r^{md_H}\sum_{w\sim_m \nu}|c_w-c_\nu|^p\big)^{1/p},
\end{aligned}\]
where the summation $\sum_{w'}$ is over all $w'\in\Lambda_{m'}$ such that $F_{w'}K\cap V_{\Lambda_{m}}\neq \emptyset$ and $x_{w'}$ is the single vertex in $F_{w'}K\cap V_{\Lambda_m}$, $w$ stands for the word in $\Lambda_m$ such that $F_{w'}K\subset F_{w}K$, and  $\sum_{w\sim_m\nu}$ is over all the pairs $w,\nu\in \Lambda_m$ with $F_wK\cap F_\nu K\neq \emptyset$.  By choosing $k\in\mathbb{N}$ such that $r^{m-k}>2\diam F_wK$ for any $w\in \Lambda_m$ (clearly this $k$ can be chosen to work for all $m$), we then have 
\[\big(r^{md_H}\sum_{w\sim \nu}|c_w-c_\nu|^p\big)^{1/p}\lesssim I_p(f,r^{m-k}),\]
thus we get the first desired estimate.
The estimate for $\|\Delta u_m\|_{L^p(K)}$ is essentially the same.
\end{proof}

Now, we prove the main result of this section.
\begin{proposition}\label{prop53}
For $1<p<\infty$, $1\leq q\leq \infty$ and $0<\sigma<2$, we have $\Lambda^{p,q}_\sigma(K)\subset B^{p,q}_\sigma(K)$ with $\|\cdot\|_{B^{p,q}_\sigma(K)}\lesssim \|\cdot\|_{\Lambda^{p,q}_\sigma(K)}$.
\end{proposition}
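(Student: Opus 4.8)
The plan is to realize $\Lambda^{p,q}_\sigma(K)$ as a sum of pieces living in the interpolation couple $\bigl(L^p(K),H^p_2(K)\bigr)$ and then invoke Lemma \ref{lemma51} together with Lemma \ref{lemma26} (with $\sigma_1=2$, so $\theta=\sigma/2$ and $\bigl(L^p(K),H^p_2(K)\bigr)_{\theta,q}=B^{p,q}_\sigma(K)$). Given $f\in\Lambda^{p,q}_\sigma(K)$, I would not work directly with the Haar projections $\tilde E[f|\Lambda_m]$ — these are merely piecewise constant and hence do not sit in $H^p_2(K)$ — but instead with the smooth substitutes $u_m\in H^p_2(K)$ furnished by Lemma \ref{lemma52}, which match the conditional expectation $E[u_m|\Lambda_m]=E[f|\Lambda_m]$ while carrying the two-sided control $\|u_m-E[f|\Lambda_m]\|_{L^p(K)}\lesssim I_p(f,r^{m-k})$ and $\|\Delta u_m\|_{L^p(K)}\lesssim r^{-md_W}I_p(f,r^{m-k})$.

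First I would set $g_m=u_m-u_{m-1}$ for $m\geq 1$ and $g_0=u_0$, so that $\sum_{m=0}^\infty g_m$ telescopes. The key is that $u_m\to f$ in $L^p(K)$: since $E[u_m|\Lambda_m]=E[f|\Lambda_m]$ converges to $f$ in $L^p$ (martingale convergence / density of piecewise-constant functions) and $\|u_m-E[f|\Lambda_m]\|_{L^p(K)}\lesssim I_p(f,r^{m-k})\to 0$ by Lemma \ref{lemma43}(a) applied to the summability hypothesis, the partial sums of $\sum g_m$ converge to $f$ in $L^p$. Thus it suffices to estimate $J(\lambda^m,g_m)=\max\{\|g_m\|_{L^p(K)},\lambda^m\|g_m\|_{H^p_2(K)}\}$ with the scaling parameter $\lambda=r^{-d_W}$, matching the exponent $r^{-m\sigma d_W/2}=\lambda^{m\theta}$ built into Definition \ref{def24}.

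The core computation is to bound each of the two terms in $J(\lambda^m,g_m)$ by $I_p(f,r^{m-k})$ up to the correct power of $r$. For the $L^p$ term, I would write $g_m=(u_m-E[f|\Lambda_m])+(E[f|\Lambda_m]-E[f|\Lambda_{m-1}])+(E[f|\Lambda_{m-1}]-u_{m-1})$ and estimate the three pieces by $I_p(f,r^{m-k})$, $\|\tilde E[f|\Lambda_m]\|_{L^p(K)}\lesssim I_p(f,r^{m-1})$ (Lemma \ref{lemma43}(a)), and $I_p(f,r^{m-1-k})$ respectively, all of which are $\lesssim I_p(f,r^{m-k'})$ for a fixed shift $k'$. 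For the $H^p_2$ term I use $\|g_m\|_{H^p_2(K)}\asymp\|g_m\|_{L^p(K)}+\|\Delta g_m\|_{L^p(K)}$ together with the bound $\|\Delta u_m\|_{L^p(K)}\lesssim r^{-md_W}I_p(f,r^{m-k})$ from Lemma \ref{lemma52}, so that $\lambda^m\|g_m\|_{H^p_2(K)}\lesssim r^{-md_W}I_p(f,r^{m-k})$, which is exactly $\lambda^m$ times the $L^p$-scale. Consequently $J(\lambda^m,g_m)\lesssim r^{-md_W}\cdot r^{\,?}$ — more precisely the two maxed terms are comparable — giving $\lambda^{-m\theta}J(\lambda^m,g_m)\lesssim r^{-m\sigma d_W/2}I_p(f,r^{m-k})$, whose $l^q$ norm is $\lesssim\|f\|_{\Lambda^{p,q}_\sigma(K)}$ by the discretization \eqref{eqn42}. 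Lemma \ref{lemma51} then yields $f\in B^{p,q}_\sigma(K)$ with the desired norm bound.

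I expect the main obstacle to be the careful bookkeeping of the index shifts and the verification that the maximum defining $J(\lambda^m,g_m)$ is achieved (up to constants) by the $H^p_2$ term scaled correctly, so that no logarithmic or borderline loss appears; in particular one must confirm that the hypothesis $\sigma<2$, i.e. $\theta<1$, is genuinely used (it is what makes $\theta$ an admissible interpolation parameter and guarantees the $J$-functional sum converges), while no lower constraint on $\sigma$ beyond positivity is needed. The convergence $u_m\to f$ in $L^p$ is the one analytic point that must be argued rather than assumed, but it follows cleanly from the summability of $\{r^{-m\sigma d_W/2}I_p(f,r^m)\}$ in $l^q\subset l^\infty$, which forces $I_p(f,r^{m-k})\to0$.
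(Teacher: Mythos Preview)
Your approach is exactly the paper's: build $u_m\in H^p_2(K)$ via Lemma \ref{lemma52}, telescope $g_m=u_m-u_{m-1}$, bound $\|g_m\|_{L^p}\lesssim I_p(f,r^{m-k-1})$ and $\|\Delta g_m\|_{L^p}\lesssim r^{-md_W}I_p(f,r^{m-k-1})$, then invoke Lemma \ref{lemma51} on the couple $\bigl(L^p(K),H^p_2(K)\bigr)$ together with Lemma \ref{lemma26}. One bookkeeping correction: take $\lambda=r^{d_W}$ (not $r^{-d_W}$), so that $\lambda^{-m\theta}=r^{-m\sigma d_W/2}$ and $\lambda^m\|g_m\|_{H^p_2(K)}\lesssim r^{md_W}\cdot r^{-md_W}I_p(f,r^{m-k-1})=I_p(f,r^{m-k-1})$ as required --- with your stated choice the $H^p_2$ term in $J(\lambda^m,g_m)$ picks up an extra $r^{-md_W}$ and blows up.
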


\begin{proof}
	Let $f\in \Lambda^{p,q}_\sigma(K)$. We define a sequence of functions $u_m$ in $H^p_2(K)$ by Lemma \ref{lemma52}, and we take 
	\[g_m=\begin{cases}
	u_0,&\text{ if }m=0,\\
	u_m-u_{m-1},&\text{ if }m>0.
	\end{cases}\]
	For $m\geq 0$, by Lemma \ref{lemma52}, we have the estimate 
	\[\begin{cases}
	\|g_m\|_{L^p(K)}\lesssim \big\|\tilde{E}[f|\Lambda_m]\big\|_{L^p(K)}+I_p(f,r^{m-k})+I_p(f,r^{m-k-1})\lesssim I_p(f,r^{m-k-1}),\\
	\|\Delta g_m\|_{L^p(K)}\lesssim r^{-md_W} \big(I_p(f,r^{m-k})+I_p(f,r^{m-k-1})\big)\lesssim r^{-md_W}I_p(f,r^{m-k-1}),
	\end{cases}\]
	where $k$ is the same as Lemma \ref{lemma52}, and we use Lemma \ref{lemma43} (a) in the second estimate of the first formula. Taking $\lambda=r^{d_W}$, $X_0=L^p(K)$ and $X_1=H^p_2(K)$ in Lemma \ref{lemma51}, it then follows that 
	\[\begin{aligned}
    \big\|r^{-m\sigma d_W/2}J(r^{md_W},g_m)\big\|_{l^q}&\leq \big\|r^{-m\sigma d_W/2}\|g_m\|_{L^p(K)}\big\|_{l^q}+\big\|r^{m(1-\sigma/2)d_W}\|g_m\|_{H^p_2(K)}\big\|_{l^q}\\
    &\lesssim \big\|r^{-m\sigma d_W/2}I_p(f,r^{m-k-1})\big\|_{l^q}\lesssim\|f\|_{\Lambda^{p,q}_\sigma(K)}.
    \end{aligned}\]
    It is easy to see that $f=\sum_{m=0}^\infty g_m$, so combining with Lemma \ref{lemma26},  we have $f\in B^{p,q}_\sigma(K)$ with $\|f\|_{B^{p,q}_\sigma(K)}\lesssim \|f\|_{\Lambda^{p,q}_{\sigma}(K)}$.
\end{proof}

Before ending this section, we mention that the same method can be applied to show that $\Lambda^{p,q}_{\sigma,(1)}(K)\subset B^{p,q}_\sigma(K)$ for $\mathscr{L}_1(p)<\sigma<2$. In this case, for each $m\geq 0$, we choose a piecewise harmonic function of level $m$ that coincides with $f$ at $V_{\Lambda_m}$, then modify it in a neighbourhood of $V_{\Lambda_m}$ to get a function $u_m$ in $H^p_2(K)$ analogous to that in Lemma \ref{lemma52}. Since the idea is essentially the same, we omit the proof, and state the result as follows.

\begin{proposition}\label{prop54}
	For $1<p<\infty$, $1\leq q\leq \infty$ and $\mathscr{L}_1(p)<\sigma<2$, we have $\Lambda^{p,q}_{\sigma,(1)}(K)\subset B^{p,q}_\sigma(K)$ with $\|\cdot\|_{B^{p,q}_\sigma(K)}\lesssim \|\cdot\|_{\Lambda^{p,q}_{\sigma,(1)}(K)}$.
\end{proposition}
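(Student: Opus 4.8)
The plan is to run the real-interpolation argument of Proposition \ref{prop53} with the piecewise-constant approximants replaced by piecewise-harmonic ones, measuring the cost of the smoothing by the graph Laplacian $H_{\Lambda_m}$. Let $f\in\Lambda^{p,q}_{\sigma,(1)}(K)$, with tent expansion $f=\sum_{m\ge0}f_m$, $f_m\in J_m$. Since $\sigma>\mathscr{L}_1(p)$, the implication $(2)\Rightarrow(3)$ in Theorem \ref{thm48}(a) gives $\big\{r^{-m\sigma d_W/2}\|f_m\|_{L^p(K)}\big\}\in l^q$ together with $\big\|r^{-m\sigma d_W/2}\|f_m\|_{L^p(K)}\big\|_{l^q}\lesssim\|f\|_{\Lambda^{p,q}_{\sigma,(1)}(K)}$. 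Write $\hat f_m=\sum_{n=0}^m f_n$, the level-$m$ piecewise harmonic function that coincides with $f$ on $V_{\Lambda_m}$.

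The core step is the analogue of Lemma \ref{lemma52}: for each $m$ I would produce a function $u_m$ in $dom_{L^p(K)}\Delta$ that equals $\hat f_m$ off a neighbourhood of $V_{\Lambda_m}$ of bounded depth $m'-m$, modified inside the small cells meeting $V_{\Lambda_m}$ so as to kill the normal-derivative mismatch there (imposing local matching and prescribing the cell averages), exactly as in steps (1)--(3) of Lemma \ref{lemma52}. The decisive observation is that the obstruction to $\hat f_m$ lying in $dom_{L^p(K)}\Delta$ sits only at the interior vertices $\mathring V_{\Lambda_m}$ and is the flux $H_{\Lambda_m}f|_{\mathring V_{\Lambda_m}}=H_{\Lambda_m}f_m|_{\mathring V_{\Lambda_m}}$, since by Lemma \ref{lemma47}(a) the coarser tent functions $f_n$ ($n<m$) are harmonic across $\mathring V_{\Lambda_m}$ and contribute nothing. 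A one-cell scaling bookkeeping — a correction of oscillation $\asymp r^{m}|H_{\Lambda_m}f(x)|$ at $x$ produces a Laplacian of size $\asymp r^{-md_W}$ times that oscillation — combined with Lemma \ref{lemma47}(b) then yields
\[\|u_m-\hat f_m\|_{L^p(K)}\lesssim r^{m(1+d_H/p)}\|H_{\Lambda_m}f_m\|_{l^p(\mathring V_{\Lambda_m})}\asymp\|f_m\|_{L^p(K)},\qquad \|\Delta u_m\|_{L^p(K)}\lesssim r^{-md_W}\|f_m\|_{L^p(K)}.\]

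Setting $g_0=u_0$ and $g_m=u_m-u_{m-1}$, the identity $g_m=(u_m-\hat f_m)-(u_{m-1}-\hat f_{m-1})+f_m$ bounds both $\|g_m\|_{L^p(K)}$ and $r^{md_W}\|\Delta g_m\|_{L^p(K)}$ by $\|f_m\|_{L^p(K)}+\|f_{m-1}\|_{L^p(K)}$, so that $J(r^{md_W},g_m)\lesssim\|f_m\|_{L^p(K)}+\|f_{m-1}\|_{L^p(K)}$ for the couple $X_0=L^p(K)$, $X_1=dom_{L^p(K)}\Delta$ and $\lambda=r^{d_W}$. Taking $\theta=\sigma/2$ — this is exactly where $\sigma<2$ enters, to keep $\theta\in(0,1)$ — Lemma \ref{lemma51} gives $f=\sum_m g_m=\lim_M u_M\in(L^p(K),dom_{L^p(K)}\Delta)_{\theta,q}$ (the partial sums converge in $L^p$ since $\|u_M-\hat f_M\|_{L^p(K)}\lesssim\|f_M\|_{L^p(K)}\to0$), with norm $\lesssim\big\|r^{-m\sigma d_W/2}\|f_m\|_{L^p(K)}\big\|_{l^q}\lesssim\|f\|_{\Lambda^{p,q}_{\sigma,(1)}(K)}$ after reindexing the shifted term and invoking Theorem \ref{thm48}(a). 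Finally $(L^p(K),dom_{L^p(K)}\Delta)_{\theta,q}=(L^p(K),H^p_2(K))_{\theta,q}=B^{p,q}_\sigma(K)$ by Lemma \ref{lemma26}, because the two domains differ only by a fixed finite-dimensional space of normal-derivative data at $V_0$, spanned by smooth functions already lying in $B^{p,q}_\sigma(K)$.

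The step I expect to be most delicate is the construction of $u_m$ with the sharp estimates above, and in particular the treatment of the boundary $V_0\subset V_{\Lambda_m}$: there $\hat f_m$ carries a genuine normal derivative that is not seen by $H_{\Lambda_m}f|_{\mathring V_{\Lambda_m}}$, so forcing the Neumann condition of $H^p_2(K)$ at $V_0$ would cost an oscillation comparable to $\hat f_m$ itself and destroy the telescoping. Working with $\Delta$ rather than $\Delta_N$ — enlarging the domain, as noted after Definition \ref{def23} — removes this difficulty, since $dom_{L^p(K)}\Delta$ imposes no condition at $V_0$; the remaining task is the careful one-cell analysis guaranteeing the two displayed scaling estimates, which is routine but is where the harmonic (rather than constant) nature of the pieces makes the bookkeeping heavier than in Lemma \ref{lemma52}.
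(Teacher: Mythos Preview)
Your strategy is the same as the paper's sketch --- replace the piecewise-constant approximants in Lemma~\ref{lemma52} by the piecewise-harmonic $\hat f_m$, smooth near $V_{\Lambda_m}$, and run the $J$-method --- but your cost accounting has a genuine error. The flux obstruction of $\hat f_m$ to lying in $dom_{L^p}\Delta$ sits at \emph{every} vertex of $V_{\Lambda_m}\setminus V_0$, not only at $\mathring V_{\Lambda_m}=V_{\Lambda_m}\setminus V_{\Lambda_{m-1}}$. At a point $x\in V_{\Lambda_{m-1}}\setminus V_0$ the coarser pieces $f_n$ ($n<m$) are piecewise harmonic only at their own level and \emph{do} carry nonzero flux $H_{\Lambda_m}f_n(x)$; Lemma~\ref{lemma47}(a) guarantees vanishing only on the new vertices $\mathring V_{\Lambda_m}$. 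Consequently, if you correct solely on $\mathring V_{\Lambda_m}$ then $u_m\notin dom_{L^p}\Delta$ and the $J$-method is not applicable; if you correct on all of $V_{\Lambda_m}\setminus V_0$ then the true cost is $r^{m(1+d_H/p)}\|H_{\Lambda_m}f\|_{l^p(V_{\Lambda_m})}$, which in general exceeds $\|f_m\|_{L^p(K)}\asymp r^{m(1+d_H/p)}\|H_{\Lambda_m}f\|_{l^p(\mathring V_{\Lambda_m})}$. The two are comparable at the $l^q$ level only when $\sigma<\mathscr L_2(p)$ (Theorem~\ref{thm48}(b)), so your displayed estimates fail precisely on the part $\mathscr L_2(p)\le\sigma<2$ of the range you are claiming.

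The repair is immediate and actually streamlines the argument: correct at all of $V_{\Lambda_m}\setminus V_0$ and record
\[
\|u_m-\hat f_m\|_{L^p(K)}\;\lesssim\; r^{m(1+d_H/p)}\|H_{\Lambda_m}f\|_{l^p(V_{\Lambda_m})},\qquad
\|\Delta u_m\|_{L^p(K)}\;\lesssim\; r^{m(d_H/p-d_H)}\|H_{\Lambda_m}f\|_{l^p(V_{\Lambda_m})}.
\]
Then $r^{-m\sigma d_W/2}J(r^{md_W},g_m)$ is bounded by $r^{m(-\sigma d_W/2+1+d_H/p)}\|H_{\Lambda_m}f\|_{l^p(V_{\Lambda_m})}$ (plus the analogous $m-1$ term and the $f_m$ contribution handled via Theorem~\ref{thm48}(a)), which is exactly the quantity in the $\Lambda^{p,q}_{\sigma,(1)}$ seminorm --- no detour through $\|f_m\|_{L^p}$ is needed. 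The paper itself puts $u_m$ directly in $H^p_2(K)$ by correcting also at $V_0$, which costs the same and spares your final step of identifying $(L^p,dom_{L^p}\Delta)_{\theta,q}$ with $(L^p,H^p_2)_{\theta,q}$.
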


\section{Embedding $B^{p,q}_\sigma(K)$ into $\Lambda^{p,q}_\sigma(K)$}
We will prove $B^{p,q}_\sigma(K)\subset \Lambda^{p,q}_\sigma(K)$ for $0<\sigma<\mathscr C(p)$ in this section. Also, we will show that $B^{p,q}_\sigma(K)\subset \Lambda^{p,q}_{\sigma,(1)}(K)$ for $\mathscr{L}_1(p)<\sigma<2$. 

First, let's look at two easy lemmas.

\begin{lemma}\label{lemma61}
	Let $\big\{X_m,\|\cdot\|_{X_m}\big\}_{m\geq 0}$ be a sequence of Banach spaces. For $1\leq q\leq\infty$ and $\alpha>0$, let 
	\[l^q_\alpha(X_\cdot)=\big\{\bm{s}=\{s_m\}_{m\geq 0}:s_m\in X_m,\forall m\geq 0,\text{ and }\{\alpha^{-m}\|s_m\|_{X_m}\}\in l^q\big\},\]
	be the space with norm $\|\bm{s}\|_{l^q_\alpha(X_\cdot)}=\big\|\alpha^{-m}\|s_m\|_{X_m}\big\|_{l^q}$. Then, for $\alpha_0\neq \alpha_1$, $0<\theta<1$ and $1\leq q_0,q_1,q\leq \infty$, we have 
	\[\big(l^{q_0}_{\alpha_0}(X_\cdot),l^{q_1}_{\alpha_0}(X_\cdot)\big)_{\theta,q}=l^{q}_{\alpha_\theta}(X_\cdot), \text{ with } \alpha_\theta=\alpha_0^{(1-\theta)}\alpha_1^\theta.\]
\end{lemma}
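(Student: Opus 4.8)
The statement is a standard interpolation result for weighted vector-valued sequence spaces, so the plan is to reduce it to the known interpolation theorem for weighted $\ell^q$ spaces and then handle the vector-valued aspect by a retraction/coretraction argument.

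First I would recall the scalar weighted result. For fixed weights the spaces $\ell^{q_j}_{\alpha_j}$ are just $\ell^{q_j}$ spaces with respect to a weight, and the classical formula (see \cite{interpolation,tribel}) gives
\[
\big(\ell^{q_0}(w_0),\ell^{q_1}(w_1)\big)_{\theta,q}=\ell^{q}(w),\quad w=w_0^{1-\theta}w_1^{\theta}.
\]
In our situation the two weights share the \emph{same} base $\alpha_0$ in the way they are indexed, but because we interpolate $l^{q_0}_{\alpha_0}$ against $l^{q_1}_{\alpha_0}$ with a genuine change only in the summability exponent, I would first reduce to the case of changing the weight by rescaling the sequence. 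Concretely, the map $\{s_m\}\mapsto\{\alpha_0^{-m}s_m\}$ is an isometry from $l^{q}_{\alpha_0}(X_\cdot)$ onto the unweighted space $l^{q}(X_\cdot):=l^{q}_{1}(X_\cdot)$, and interpolation commutes with such isometric isomorphisms. The subtle point is that the target exponent $\alpha_\theta=\alpha_0^{1-\theta}\alpha_1^{\theta}$ genuinely differs from $\alpha_0$, so after removing the common factor I am interpolating $l^{q_0}(X_\cdot)$ against $l^{q_1}_{\beta}(X_\cdot)$ with $\beta=\alpha_1/\alpha_0\neq 1$, and the desired weight becomes $\beta^{\theta}=(\alpha_\theta/\alpha_0)$, matching the scalar formula.

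The main device for the vector-valued step is the retraction-coretraction principle. I would observe that each $X_m$ is a complemented piece of a single large space only in a formal sense, so instead I would appeal directly to the known fact that the real interpolation of weighted $\ell^q$-sums of Banach spaces behaves exactly as in the scalar case; this is proved by the discrete $K$-functional computation
\[
K\big(t,\bm{s};l^{q_0}_{\alpha_0}(X_\cdot),l^{q_1}_{\alpha_0}(X_\cdot)\big)\asymp\Big(\sum_{m}\big(\min\{1,t\alpha_0^{-m}\}\,\alpha_0^{-m}\|s_m\|_{X_m}\big)^{?}\Big)^{?},
\]
where one splits the infimum defining $K$ coordinatewise because the two norms act diagonally on the sequence. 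Since the decomposition $\bm{s}=\bm{s}^{(0)}+\bm{s}^{(1)}$ can be optimized in each coordinate $m$ independently, the $K$-functional factors as a weighted $\ell^q$-type expression over $m$ of the scalar $K$-functionals $K(t,\|s_m\|_{X_m};\mathbb{R},\mathbb{R})$ with the weights $\alpha_0^{-m}$, and then the standard scalar computation of the $(\theta,q)$-norm yields the weight $\alpha_\theta^{-m}$.

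The step I expect to be the genuine obstacle is justifying that the infimum over decompositions in the definition of the $K$-functional may be taken coordinatewise, i.e. that the optimal splitting of $\bm{s}$ is diagonal. This is where the diagonal (non-interacting) action of the two norms on distinct coordinates $m$ is essential: because $\|\cdot\|_{l^{q_j}_{\alpha_0}(X_\cdot)}$ depends on $\{s_m\}$ only through the scalars $\|s_m\|_{X_m}$ and these are coupled solely through the outer $\ell^{q_j}$ norm, one reduces to the scalar lemma that $K(t,\cdot)$ for weighted $\ell^q$ spaces is computed coordinatewise. Once that reduction is in hand, the remaining calculation is the routine scalar identity and requires only that $\alpha_0\neq\alpha_1$ so that the two weights are genuinely distinct and the interpolation is nondegenerate. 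I would therefore structure the write-up as: (i) isometric reduction to a single common weight, (ii) coordinatewise $K$-functional computation, (iii) invocation of the scalar weighted-$\ell^q$ interpolation formula, assembling the stated weight $\alpha_\theta=\alpha_0^{1-\theta}\alpha_1^{\theta}$.
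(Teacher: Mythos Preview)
Your proposal is correct and essentially reconstructs the proof of Theorem~5.6.1 in Bergh--L\"ofstr\"om \cite{interpolation}, which is precisely what the paper invokes: the paper gives no independent argument but simply cites that theorem, remarking that allowing the coordinate spaces $X_m$ to vary with $m$ causes no difficulty since the norms act diagonally. Your coordinatewise $K$-functional computation is exactly the mechanism behind that theorem, and you also correctly read through the evident typo in the displayed statement (the second space should be $l^{q_1}_{\alpha_1}$, not $l^{q_1}_{\alpha_0}$, as the conclusion $\alpha_\theta=\alpha_0^{1-\theta}\alpha_1^\theta$ and the hypothesis $\alpha_0\neq\alpha_1$ make clear).
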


This lemma is revised from Theorem 5.6.1 in book \cite{interpolation} with a same argument. The difference  is that we allow each coordinate taking values in different spaces, which does not bring any difficult to the proof.  As an immediate consequence, we can see the following interpolation lemma for $\Lambda^{p,q}_\sigma(K)$ and $\Lambda^{p,q}_{\sigma,(1)}(K)$. 

\begin{lemma}\label{lemma62}
Let $1\leq p,q,q_0,q_1\leq\infty$, $0<\sigma_0,\sigma_1<\infty$ and $0<\theta<1$. We have 

(a). $\big(\Lambda^{p,q_0}_{\sigma_0}(K),\Lambda^{p,q_1}_{\sigma_1}(K)\big)_{\theta,q}\subset \Lambda^{p,q}_{\sigma_\theta}(K)$;

(b). $\big(\Lambda^{p,q_0}_{\sigma_0,(1)}(K),\Lambda^{p,q_1}_{\sigma_1,(1)}(K)\big)_{\theta,q}\subset \Lambda^{p,q}_{\sigma_\theta,(1)}(K)$,

\noindent with $\sigma_\theta=(1-\theta)\sigma_0+\theta\sigma_1$.
\end{lemma}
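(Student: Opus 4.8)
The plan is to deduce both embeddings from Lemma \ref{lemma61} via the functoriality of the real interpolation functor: any linear operator that is bounded on each endpoint of a compatible couple is automatically bounded between the corresponding interpolation spaces. The starting point is the discrete characterization (\ref{eqn42}), which gives $\|f\|_{\Lambda^{p,q}_\sigma(K)}\asymp \|f\|_{L^p(K)}+\big\|r^{-m\sigma d_W/2}I_p(f,r^m)\big\|_{l^q}$, together with the analogous definition of $\|\cdot\|_{\Lambda^{p,q}_{\sigma,(1)}(K)}$. In both cases the $\sigma$-dependence of the weight enters only through the factor $\alpha_\sigma:=r^{\sigma d_W/2}$, and one checks $\alpha_{\sigma_0}^{1-\theta}\alpha_{\sigma_1}^\theta=r^{\sigma_\theta d_W/2}=\alpha_{\sigma_\theta}$, which is exactly the geometric interpolation of weights produced by Lemma \ref{lemma61}.

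For part (a), I would set $\Omega_m=\{(x,y)\in K\times K:R(x,y)<r^m\}$ and take $X_m=L^p(\Omega_m,r^{-md_H}d\mu\times d\mu)$, so that the linear map $\Phi f=\{(x,y)\mapsto f(x)-f(y)\}_{m\geq 0}$, with $m$-th coordinate restricted to $\Omega_m$, satisfies $\|(\Phi f)_m\|_{X_m}=I_p(f,r^m)$. By Lemma \ref{lemma43} (b) this is well defined on all of $L^p(K)$, the common ambient space of the couple, and $\Phi$ maps $\Lambda^{p,q_i}_{\sigma_i}(K)$ boundedly into $l^{q_i}_{\alpha_{\sigma_i}}(X_\cdot)$ for $i=0,1$, since $\|\Phi f\|_{l^{q_i}_{\alpha_{\sigma_i}}(X_\cdot)}=\big\|r^{-m\sigma_i d_W/2}I_p(f,r^m)\big\|_{l^{q_i}}\leq\|f\|_{\Lambda^{p,q_i}_{\sigma_i}(K)}$. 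Functoriality and Lemma \ref{lemma61} then show $\Phi$ is bounded from $\big(\Lambda^{p,q_0}_{\sigma_0}(K),\Lambda^{p,q_1}_{\sigma_1}(K)\big)_{\theta,q}$ into $l^q_{\alpha_{\sigma_\theta}}(X_\cdot)$, which controls the seminorm part $\big\|r^{-m\sigma_\theta d_W/2}I_p(f,r^m)\big\|_{l^q}$. The remaining term $\|f\|_{L^p(K)}$ must be recovered separately: since the identity is bounded $\Lambda^{p,q_i}_{\sigma_i}(K)\hookrightarrow L^p(K)$ for both $i$, functoriality gives a bounded identity into $(L^p(K),L^p(K))_{\theta,q}=L^p(K)$. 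Adding the two estimates yields $\|f\|_{\Lambda^{p,q}_{\sigma_\theta}(K)}\lesssim\|f\|_{(\Lambda^{p,q_0}_{\sigma_0},\Lambda^{p,q_1}_{\sigma_1})_{\theta,q}}$, which is (a).

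Part (b) follows the same template with the genuinely linear map $\Psi f=\{H_{\Lambda_m}f\}_{m\geq 0}$ (here I assume $\sigma_0,\sigma_1>\mathscr{L}_1(p)$, so that the spaces $\Lambda^{p,q_i}_{\sigma_i,(1)}(K)$, hence also $\Lambda^{p,q}_{\sigma_\theta,(1)}(K)$, are defined). The correct target coordinate space is $Y_m=l^p(V_{\Lambda_m})$ renormalized by the $\sigma$-independent factor $r^{m(1+d_H/p)}$, so that $\big\|r^{m(-\sigma d_W/2+1+d_H/p)}\|H_{\Lambda_m}f\|_{l^p(V_{\Lambda_m})}\big\|_{l^q}=\|\Psi f\|_{l^q_{\alpha_\sigma}(Y_\cdot)}$; working on $C(K)$ as the common ambient space, the functoriality argument and the $L^p$ step go through verbatim. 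The main point to get right — and essentially the only nontrivial step beyond bookkeeping — is this matching of weights: one must absorb every $\sigma$-independent exponential factor into the coordinate Banach spaces $X_m$ (resp. $Y_m$) so that the residual weight is the purely geometric $\alpha_\sigma=r^{\sigma d_W/2}$ demanded by Lemma \ref{lemma61}, and then remember that the $L^p$ (resp. continuity) component, which both $\Phi$ and $\Psi$ kill on constants, has to be supplied by the trivial interpolation $(L^p,L^p)_{\theta,q}=L^p$ rather than by the sequence-space map.
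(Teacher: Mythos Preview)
Your argument is correct and is precisely the fleshing-out of what the paper leaves implicit: the paper simply states Lemma~\ref{lemma62} as an ``immediate consequence'' of Lemma~\ref{lemma61}, and your use of functoriality of the real interpolation functor together with a linear map into the weighted sequence spaces $l^q_\alpha(X_\cdot)$ is exactly the mechanism that makes that immediacy work. The only small caveat is that Lemma~\ref{lemma61} is stated for $\alpha_0\neq\alpha_1$, so your argument, like the paper's, tacitly assumes $\sigma_0\neq\sigma_1$; this is harmless since every application in the paper has distinct endpoints.
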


From now on, we will separate our consideration into two cases, according to $(\frac{1}{p},\sigma)$ belongs to $\mathscr A_1$ or $\mathscr A_2$. We will deal with the border between $\mathscr{A}_1$ and $\mathscr{A}_2$ by using Lemma \ref{lemma62}. For the region $\mathscr A_1$, in fact, we mainly consider a larger region $\mathscr B:=\big\{(\frac 1 p,\sigma): \mathscr L_1(p)<\sigma<\mathscr L_2(p)\big\}$ instead.

\subsection{On regions $\mathscr{A}_1$ and $\mathscr B$}
To reach the goal that $B^{p,q}_\sigma(K)\subset \Lambda^{p,q}_\sigma(K)$ for $(\frac 1 p,\sigma)\in\mathscr A_1$, by Theorem \ref{thm48} (c), it suffices to prove $B^{p,q}_\sigma(K)\subset\Lambda^{p,q}_{\sigma,(1)}(K)$.
We will fulfill this for the parameter region $\mathscr B=\big\{(\frac 1 p,\sigma): \mathscr L_1(p)<\sigma<\mathscr L_2(p)\big\}$, which is the region between the two critical lines $\mathscr L_1$ and $\mathscr L_2$, and of cause contains $\mathscr A_1$. 

Note that we can write each $f\in C(K)$ as a unique series,
\[f=\sum_{m=0}^\infty f_m, \text{ with } f_m\in J_m, \quad\forall m\geq 0,\]
and in addition by Theorem \ref{thm48} (b), it always holds
\[\|f\|_{\Lambda^{p,q}_{\sigma,(1)}(K)}\asymp \big\|r^{-m\sigma d_W/2}\|f_m\|_{L^p(K)}\big\|_{l^q}.\]

Let's start with the following observation.

\begin{lemma}\label{lemma63}
Let $1<p,q<\infty$ and $0<\sigma<2$. Write $p'=\frac{p}{p-1}$ and $q'=\frac{q}{q-1}$. Then there is a continuous quadratic form $\tilde{\mathcal{E}}(\cdot,\cdot)$ on $B^{p,q}_{\sigma}(K)\times B^{p',q'}_{2-\sigma}(K)$ such that
\[\tilde{\mathcal{E}}(f,g)=\mathcal{E}(f,g),\quad\forall f\in B^{p,q}_\sigma(K)\cap dom\mathcal{E},\quad g\in H^{p'}_{2}(K).\]	
\end{lemma}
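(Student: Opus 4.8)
The plan is to realise $\tilde{\mathcal{E}}$ as the bilinear form obtained by interpolating the energy pairing between its two natural endpoint boundedness properties, using the weak definition of $\Delta_N$ as an integration-by-parts device. The two endpoints I would record first. By Hölder's inequality the bilinear form $(f,g)\mapsto -\int_K(\Delta_N f)\,g\,d\mu$ is bounded on $H^p_2(K)\times L^{p'}(K)$, and by Definition \ref{def23} it equals $\mathcal{E}(f,g)$ whenever $g\in dom\mathcal{E}$. Symmetrically $(f,g)\mapsto -\int_K f\,(\Delta_N g)\,d\mu$ is bounded on $L^p(K)\times H^{p'}_2(K)$. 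On the overlap $H^p_2(K)\times H^{p'}_2(K)$ both expressions equal $\mathcal{E}(f,g)$ by symmetry of $\mathcal{E}$, so together they determine a single consistent bilinear form $T$ on the overlap of the two interpolation couples.

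Next I set up the interpolation. By Lemma \ref{lemma26} with $\sigma_1=2$, together with $(L^p,H^p_2)_{1-\theta,q}=(H^p_2,L^p)_{\theta,q}$, one has
\[B^{p,q}_\sigma(K)=\big(H^p_2(K),L^p(K)\big)_{\theta,q},\qquad B^{p',q'}_{2-\sigma}(K)=\big(L^{p'}(K),H^{p'}_2(K)\big)_{\theta,q'},\]
with the \emph{common} parameter $\theta=1-\sigma/2\in(0,1)$, so that the two endpoints of $T$ are paired with the same $\theta$. It then remains to run a bilinear real-interpolation estimate through the $J$-method, as in Lemma \ref{lemma51}. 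Taking $J$-representations $f=\sum_m u_m$ and $g=\sum_n v_n$ with $u_m\in H^p_2(K)$, $v_n\in H^{p'}_2(K)$, the normalised sequences $a_m=2^{-m\theta}J(2^m,u_m)$ and $b_n=2^{-n\theta}J(2^n,v_n)$ satisfy $\|a\|_{l^q}\lesssim\|f\|_{B^{p,q}_\sigma(K)}$ and $\|b\|_{l^{q'}}\lesssim\|g\|_{B^{p',q'}_{2-\sigma}(K)}$, and the two endpoint bounds give
\[|T(u_m,v_n)|\le\min\big\{M_0\,2^{(m+n)\theta},\,M_1\,2^{(m+n)(\theta-1)}\big\}\,a_m b_n\lesssim\phi(m+n)\,a_m b_n,\]
where $\phi(k)=\min\{2^{k\theta},2^{k(\theta-1)}\}$ decays geometrically in $|k|$ because $0<\theta<1$. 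Summing and using that $q,q'$ are conjugate exponents,
\[\sum_{m,n}\phi(m+n)a_m b_n=\sum_k\phi(k)\sum_m a_m b_{k-m}\le\|\phi\|_{l^1}\,\sup_k\Big|\sum_m a_m b_{k-m}\Big|\lesssim\|a\|_{l^q}\|b\|_{l^{q'}},\]
which shows that $T(f,g)=\sum_{m,n}T(u_m,v_n)$ converges absolutely and defines a bounded bilinear form $\tilde{\mathcal{E}}:=T$ on $B^{p,q}_\sigma(K)\times B^{p',q'}_{2-\sigma}(K)$.

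Finally I would identify $\tilde{\mathcal{E}}$ with $\mathcal{E}$ on the stated subspace. For $g\in H^{p'}_2(K)$ (the intersection of the second couple) the trivial representation $v_0=g$ gives $\tilde{\mathcal{E}}(f,g)=\sum_m T(u_m,g)=-\int_K f\,(\Delta_N g)\,d\mu$ for every $f\in B^{p,q}_\sigma(K)$, since $\sum_m u_m\to f$ in $L^p(K)$ and $h\mapsto-\int_K h\,(\Delta_N g)\,d\mu$ is $L^p$-continuous; if in addition $f\in dom\mathcal{E}$, the weak definition of $\Delta_N$ rewrites this as $\mathcal{E}(f,g)$. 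The main obstacle is exactly this consistency bookkeeping: one must check that the two endpoint forms genuinely agree on the overlap so that a single $T$ can be interpolated, and that the abstract interpolated functional is correctly identified with the concrete integration-by-parts expression (and hence with $\mathcal{E}$) on the dense piece; the interpolation inequality itself is routine once the conjugacy $\frac1q+\frac1{q'}=1$ is exploited.
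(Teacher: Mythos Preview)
Your argument is correct, but it proceeds differently from the paper's. The paper does not interpolate the \emph{bilinear} form directly; instead it freezes $g$ and views $f\mapsto\tilde{\mathcal{E}}(f,\cdot)$ as a \emph{linear} operator into dual spaces. From the two endpoint bounds one gets boundedness $L^p(K)\to\big(H^{p'}_2(K)\big)^*$ and $H^p_2(K)\to\big(L^{p'}(K)\big)^*$, and then ordinary linear real interpolation combined with the duality theorem for the real method (Theorem 3.7.1 in \cite{interpolation}) yields boundedness $B^{p,q}_\sigma(K)\to\big(B^{p',q'}_{2-\sigma}(K)\big)^*$. Your route instead reproves a bilinear interpolation inequality by hand via the $J$-method: you take $J$-representations on both sides, exploit that the two couples share the same $\theta=1-\sigma/2$, bound each cross term $T(u_m,v_n)$ by the better of the two endpoints, and sum using $l^q$--$l^{q'}$ duality. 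This is essentially the Lions--Peetre bilinear mechanism, carried out explicitly. The paper's approach is shorter and invokes an off-the-shelf theorem; yours is more self-contained and makes the absolute convergence of the double sum visible, at the small cost of a somewhat longer computation. One minor point: for the $J$-representation to exist for every $f$ in the interpolation space you should index over $m\in\mathbb{Z}$ (the discrete $J$-method), not $m\ge 0$ as in Lemma~\ref{lemma51}, which only states one direction; with $m,n\in\mathbb{Z}$ your kernel $\phi(k)=\min\{2^{k\theta},2^{k(\theta-1)}\}$ is summable over all integers precisely because $0<\theta<1$, and the rest of your estimate goes through unchanged.
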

\begin{proof} First, by the definition of $\Delta_N$, we can see that 
\begin{equation}\label{eqn61}
\big|\mathcal{E}(f,g)\big|=\big|\int_K f\Delta_N gd\mu\big|\leq \|f\|_{L^p(K)}\|g\|_{H^{p'}_{2}(K)},
\end{equation}
for any $f\in L^p(K)\cap dom\mathcal{E}$ and $g\in H^{p'}_{2}(K)$. So there is a continuous quadratic form $\tilde{\mathcal{E}}:L^p(K)\times H^{p'}_{2}(K)\to\mathbb{C}$,  such that $\tilde{\mathcal{E}}(f,g)=\mathcal{E}(f,g)$ for any $f\in L^p(K)\cap dom\mathcal{E}$ and $g\in H^{p'}_2(K)$. 
In addition, we can see that $\mathcal{E}(f,g)\leq \|f\|_{H^p_2(K)}\cdot \|g\|_{L^{p'}(K)}$ for any $f\in H^p_2(K)$ and $g\in H_2^{p'}(K)$.  

As a consequence, the mapping $f\to\tilde{\mathcal{E}}(f,\cdot)$ is continuous from $L^p(K)$ to $\big(H^{p'}_{2}(K)\big)^*$, and is continuous from $H^p_{2}(K)$ to $\big(L^{p'}(K)\big)^*$ since $H^{p'}_{2}(K)$ is dense in $L^{p'}(K)$, where we use $*$ to denote the dual space. Using the theorem of real interpolation (See {\cite{interpolation}} Theorem 3.7.1), we have $f\to\tilde{\mathcal{E}}(f,\cdot)$ is continuous from $B^{p,q}_{\sigma}(K)$ to $\big(B^{p',q'}_{2-\sigma}(K)\big)^*$. So {$\tilde{\mathcal{E}}$} extends to a continuous quadratic form on $B^{p,q}_{\sigma}(K)\times B^{p',q'}_{2-\sigma}(K)$.
\end{proof}

\begin{proposition}\label{prop64}
For $1<p<\infty$, $1\leq q\leq \infty$ and $\mathscr{L}_1(p)<\sigma<\mathscr{L}_2(p)$, we have  $B^{p,q}_\sigma(K)=\Lambda^{p,q}_{\sigma,(1)}(K)$ with $\|\cdot\|_{\Lambda^{p,q}_{\sigma,(1)}(K)}\asymp \|\cdot\|_{B^{p,q}_\sigma(K)}$.
In particular, if $(\frac 1p,\sigma)\in \mathscr A_1$, we have $B^{p,q}_\sigma(K)=\Lambda^{p,q}_\sigma(K)$ with $\|\cdot\|_{\Lambda^{p,q}_\sigma(K)}\asymp\|\cdot\|_{B^{p,q}_\sigma(K)}.$
\end{proposition}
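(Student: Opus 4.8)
The inclusion $\Lambda^{p,q}_{\sigma,(1)}(K)\subset B^{p,q}_\sigma(K)$ is already given by Proposition~\ref{prop54}, so the plan is to prove only the reverse inclusion, with $\|f\|_{\Lambda^{p,q}_{\sigma,(1)}(K)}\lesssim\|f\|_{B^{p,q}_\sigma(K)}$. Since $\sigma>\mathscr L_1(p)$ we have $B^{p,q}_\sigma(K)\subset C(K)$, so each $f\in B^{p,q}_\sigma(K)$ has a tent expansion $f=\sum_{m\ge0}f_m$ with $f_m\in J_m$, and by Theorem~\ref{thm48}(b) it is enough to bound $\big\|r^{-m\sigma d_W/2}\|f_m\|_{L^p(K)}\big\|_{l^q}$ by $\|f\|_{B^{p,q}_\sigma(K)}$. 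I would do this by testing $f$ against tent functions through the extended form $\tilde{\mathcal E}$ of Lemma~\ref{lemma63}, exploiting that the strip $\mathscr L_1(p)<\sigma<\mathscr L_2(p)$ is \emph{self-dual}: since $(p')'=p$, one has $\mathscr L_2(p)=2-\mathscr L_1(p')$ and $\mathscr L_2(p')=2-\mathscr L_1(p)$, so the conjugate exponent $\tau:=2-\sigma$ satisfies $\mathscr L_1(p')<\tau<\mathscr L_2(p')$ precisely in our range. Hence the tent characterization of Theorem~\ref{thm48}(b) and Proposition~\ref{prop54} are both available for the dual data $(p',q',\tau)$.

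The algebraic core is an orthogonality relation. For $g\in J_n$ and $h\in dom\mathcal E$, the self-similar identity (\ref{eq21}) (extended to the partition $\Lambda_n$) together with the harmonicity of $g$ on each level-$n$ cell gives $\mathcal E(h,g)=\mathcal E_{\Lambda_n}(h|_{V_{\Lambda_n}},g|_{V_{\Lambda_n}})=-\langle H_{\Lambda_n}h,g\rangle_{l^2(\mathring V_{\Lambda_n})}$. Combined with Lemma~\ref{lemma47}(a) this forces $\mathcal E(f_m,g)=0$ whenever $m\neq n$, i.e. tent functions of different levels are $\mathcal E$-orthogonal. Using this, the continuity of $\tilde{\mathcal E}$ and the embedding $B^{p,q}_\sigma(K)\hookrightarrow C(K)$, I would first upgrade the pairing identity to
\[\tilde{\mathcal E}(f,g_n)=-\langle H_{\Lambda_n}f,\,g_n\rangle_{l^2(\mathring V_{\Lambda_n})},\qquad g_n\in J_n,\]
for all $f\in B^{p,q}_\sigma(K)$, the right side depending only on $f|_{V_{\Lambda_n}}$, by approximating $f$ by finite tent sums.

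The duality scheme then runs as follows. Fix a near-extremal nonnegative sequence $\{b_n\}$ with $\|b_n\|_{l^{q'}}\le1$ for the norm $\big\|r^{-m\sigma d_W/2}\|f_m\|_{L^p(K)}\big\|_{l^q}$; for each $n$ pick $\phi_n\in l^{p'}(\mathring V_{\Lambda_n})$, $\|\phi_n\|_{l^{p'}}\le1$, dual to $H_{\Lambda_n}f_n$, let $g_n\in J_n$ be the tent function with $g_n|_{\mathring V_{\Lambda_n}}=\phi_n$, and set $g=\sum_n c_n g_n$ with $c_n:=b_n\,r^{n(1+d_H/p-\sigma d_W/2)}$. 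Orthogonality and the pairing identity give $\tilde{\mathcal E}(f,g)=-\sum_n c_n\|H_{\Lambda_n}f_n\|_{l^p(\mathring V_{\Lambda_n})}$, and Lemma~\ref{lemma47}(b) converts $\|H_{\Lambda_n}f_n\|_{l^p}$ into $r^{-n(1+d_H/p)}\|f_n\|_{L^p(K)}$, so $|\tilde{\mathcal E}(f,g)|\asymp\sum_n b_n r^{-n\sigma d_W/2}\|f_n\|_{L^p(K)}\asymp\big\|r^{-m\sigma d_W/2}\|f_m\|_{L^p(K)}\big\|_{l^q}$. On the other side, $\|g_n\|_{L^{p'}(K)}\asymp r^{nd_H/p'}\|\phi_n\|_{l^{p'}}$ by Lemma~\ref{lemma47}(b), and Proposition~\ref{prop54} with Theorem~\ref{thm48}(b) for $(p',q',\tau)$ give $\|g\|_{B^{p',q'}_{2-\sigma}(K)}\lesssim\big\|r^{-n\tau d_W/2}c_n r^{nd_H/p'}\big\|_{l^{q'}}$. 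The exponent of $r$ here collapses,
\[-\tfrac{(2-\sigma)d_W}{2}+\Big(1+\tfrac{d_H}{p}-\tfrac{\sigma d_W}{2}\Big)+\tfrac{d_H}{p'}=-d_W+1+d_H=0,\]
using $d_W=1+d_H$ and $\tfrac1p+\tfrac1{p'}=1$, whence $\|g\|_{B^{p',q'}_{2-\sigma}(K)}\lesssim\|b_n\|_{l^{q'}}\le1$. Feeding these into the continuity estimate $|\tilde{\mathcal E}(f,g)|\lesssim\|f\|_{B^{p,q}_\sigma(K)}\|g\|_{B^{p',q'}_{2-\sigma}(K)}$ of Lemma~\ref{lemma63} yields $\|f\|_{\Lambda^{p,q}_{\sigma,(1)}(K)}\lesssim\|f\|_{B^{p,q}_\sigma(K)}$, completing $B^{p,q}_\sigma(K)=\Lambda^{p,q}_{\sigma,(1)}(K)$. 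The assertion for $(\tfrac1p,\sigma)\in\mathscr A_1$ is then immediate from Theorem~\ref{thm48}(c), which identifies $\Lambda^{p,q}_{\sigma,(1)}(K)$ with $\Lambda^{p,q}_\sigma(K)$ on that region.

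The main obstacle I foresee is making the pairing identity rigorous when $f$ has infinite energy. The tent test functions $g_n$ are piecewise harmonic and do \emph{not} lie in $H^{p'}_2(K)$, since their normal derivatives jump across $V_{\Lambda_n}$, so Lemma~\ref{lemma63} does not apply to them directly; one must identify $\tilde{\mathcal E}(f,g_n)$ with $\mathcal E(f,g_n)$ through a simultaneous approximation of $g_n$ by $H^{p'}_2(K)$-functions converging in both the energy norm and the $B^{p',q'}_{2-\sigma}(K)$-norm, coupled with the tent-sum approximation of $f$ in $B^{p,q}_\sigma(K)$. Reconciling these two limits is the delicate point; the exponent bookkeeping above, by contrast, is essentially forced once the self-dual range of $\sigma$ is identified.
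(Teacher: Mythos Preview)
Your approach is essentially the paper's: both prove the reverse inclusion by pairing $f$ against tent test functions through the extended bilinear form $\tilde{\mathcal E}$ of Lemma~\ref{lemma63}, exploiting the self-duality of the strip $\mathscr L_1(p)<\sigma<\mathscr L_2(p)$ and the tent characterization of Theorem~\ref{thm48}(b) on the dual side. Your exponent bookkeeping is correct, and the final appeal to Theorem~\ref{thm48}(c) for the $\mathscr A_1$ statement matches the paper.

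There is, however, a genuine circularity in the step where you ``upgrade the pairing identity to all $f\in B^{p,q}_\sigma(K)$ by approximating $f$ by finite tent sums.'' Finite tent sums $\sum_{m\le M}f_m$ converge to $f$ only uniformly; to pass to the limit in $\tilde{\mathcal E}(\cdot,g_n)$ you need convergence in $B^{p,q}_\sigma(K)$ (since $g_n\notin H^{p'}_2(K)$), and that is precisely equivalent, via Proposition~\ref{prop54}, to $\big\|r^{-m\sigma d_W/2}\|f_m\|_{L^p}\big\|_{l^q}<\infty$---the conclusion you are trying to establish. Your closing paragraph even repeats this circularity (``tent-sum approximation of $f$ in $B^{p,q}_\sigma(K)$''). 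The paper sidesteps this entirely: it proves the pairing formula and the duality estimate (its Claim~3) only for $f$ \emph{already assumed} to lie in $\Lambda^{p,q}_{\sigma,(1)}(K)$, concludes that $\Lambda^{p,q}_{\sigma,(1)}(K)$ is a closed subspace of $B^{p,q}_\sigma(K)$, and then finishes with the observation $H^p_2(K)\subset\Lambda^{p,q}_{\sigma,(1)}(K)$ plus density of $H^p_2(K)$ in $B^{p,q}_\sigma(K)$. Your argument can be repaired in the same spirit by approximating $f$ by $H^p_2(K)$-functions (which \emph{do} converge in $B^{p,q}_\sigma(K)$ and lie in $dom\mathcal E$) rather than by tent sums; your proposed simultaneous energy/$B^{p',q'}_{2-\sigma}$ approximation of $g_n$ then handles the other slot.

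One further point you omit: Lemma~\ref{lemma63} is stated for $1<q<\infty$, and density of $H^p_2(K)$ in $B^{p,q}_\sigma(K)$ also fails for $q=\infty$. The paper treats the endpoints $q=1,\infty$ separately by interpolation via Lemma~\ref{lemma62}(b) after establishing the $1<q<\infty$ case; you should do the same.
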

\begin{proof} 
By Theorem \ref{thm48} (c), it suffices to prove the first result. Also, by Lemma \ref{lemma62} (b) and Proposition \ref{prop54}, it suffices to consider the $1<q<\infty$ case. 

By Lemma \ref{lemma63}, there exists $\tilde{\mathcal{E}}$ on $B^{p,q}_\sigma(K)\times B^{p',q'}_{2-\sigma}(K)$ with $p'=\frac{p}{p-1},q'=\frac{q}{q-1}$ satisfying (\ref{eqn61}). In the following claims, we provide an exact formula of $\tilde{\mathcal{E}}$ on $\Lambda^{p,q}_{\sigma,(1)}(K)\times \Lambda^{p',q'}_{2-\sigma,(1)}(K)$. \vspace{0.2cm}

\noindent\textit{Claim 1. Let $M\geq 0$,  $f=\sum_{m=0}^M f_m$ and $g=\sum_{m=0}^M g_m$ with $f_m,g_m\in J_m$, $0\leq m\leq M$. We have 
\[\tilde{\mathcal{E}}(f,g)=\sum_{m=0}^M \mathcal{E}(f_m,g_m)=-\sum_{m=0}^M <f_m,H_{\Lambda_m}g_m>_{l^2(V_{\Lambda_m})}.\]}

\textit{Proof.} Clearly, we have $f\in dom\mathcal{E}\cap B^{p,q}_\sigma(K)$, and $g\in B^{p',q'}_{2-\sigma}(K)$ by  Proposition \ref{prop54}, thus there is a sequence of functions $g^{(n)}$ in $ H^{p'}_2(K)$ converging to $g$ in $B^{p',q'}_{2-\sigma}(K)$. For each $n$, by Lemma \ref{lemma63}, we have 
\[\tilde{\mathcal{E}}(f,g^{(n)})=\mathcal{E}(f,g^{(n)})=\sum_{m=1}^M\mathcal{E}(f_m,g^{(n)})=-\sum_{m=1}^M<H_{\Lambda_m}f_m,g^{(n)}>_{l^2(V_{\Lambda_m})}.\]
Letting $n\rightarrow\infty$, we have the claim proved, since $g^{(n)}$ converges to $g$ in $B^{p',q'}_{2-\sigma}(K)$ and thus converges uniformly as $2-\sigma>\mathscr{L}_1(p')$. \hfill$\square$\vspace{0.2cm}

\noindent\textit{Claim 2. Let $f=\sum_{m=0}^\infty f_m$ and $g=\sum_{m=0}^\infty g_m$ with $f_m,g_m\in J_m,\forall m\geq 0$, and
\[\big\|r^{-m\sigma d_W/2}\|f_m\|_{L^p(K)}\big\|_{l^q}<\infty,\quad \big\|r^{m(\sigma -2)d_W/2}\|g_m\|_{L^{p'}(K)}\big\|_{l^{q'}}<\infty.\]
We have 
\[\tilde{\mathcal{E}}(f,g)=-\sum_{m=0}^\infty <H_{\Lambda_m}f_m,g_m>_{l^2(V_{\Lambda_m})}.\]}

\textit{Proof.} By using Claim 1, we have  $\tilde{\mathcal{E}}\big(\sum_{m=0}^Mf_m,\sum_{m=0}^Mg_m\big)=-\sum_{m=0}^M<H_{\Lambda_m}f_m,g_m>_{l^2(V_{\Lambda_m})}$ for any $M\geq 0$. Letting $M\rightarrow\infty$, then the claim follows, since the left side converges to $\tilde{\mathcal{E}}(f,g)$ as $\sum_{m=0}^M f_m$ converges to $f$ in $B^{p,q}_\sigma(K)$ and  $\sum_{m=0}^M g_m$ converges to $g$ in $B^{p',q'}_{2-\sigma}(K)$ by Proposition \ref{prop54}.\hfill$\square$\vspace{0.2cm}

\noindent\textit{Claim 3. Let $f=\sum_{m=0}^\infty f_m$ with $f_m\in J_m,\forall m\geq 0$, and $\big\|r^{-m\sigma d_W/2}\|f_m\|_{L^p(K)}\big\|_{l^q}<\infty$. We have $\big\|r^{-m\sigma d_W/2}\|f_m\|_{L^p(K)}\big\|_{l^q}\lesssim \|f\|_{B^{p,q}_\sigma(K)}$}. \vspace{0.2cm}

\textit{Proof.} The space $l^{q'}\big(l^{p'}(V_{\Lambda_\cdot})\big)$ can be identified with the dual space of $l^{q}\big(l^{p}(V_{\Lambda_\cdot})\big)$ in a natural way, and thus we can find $g=\sum_{m=0}^\infty g_m$, with $g_m\in J_m$ and $0<\big\|r^{m(\sigma -2)d_W/2}\|g_m\|_{L^{p'}(K)}\big\|_{l^{q'}}<\infty$, such that
\[\begin{aligned}
\big|\tilde{\mathcal{E}}(f,g)\big|&\geq \frac{1}{2}\big\|r^{-m\sigma d_W/2+m+md_H/p}\|H_{\Lambda_m}f_m\|_{l^p({V}_{\Lambda_m})}\big\|_{l^q}\cdot\big\|r^{m(\sigma-2)d_W/2+md_H/{p'}}\|g_m\|_{l^{p'}({V}_{\Lambda_m})}\big\|_{l^{q'}}\\
&\gtrsim\big\|r^{-m\sigma d_W/2}\|f_m\|_{L^p(K)}\big\|_{l^q}\cdot \big\|r^{m(\sigma -2)d_W/2}\|g_m\|_{L^{p'}(K)}\big\|_{l^{q'}}\\
&\gtrsim\big\|r^{-m\sigma d_W/2}\|f_m\|_{L^p(K)}\big\|_{l^q}\cdot \|g\|_{B^{p',q'}_{2-\sigma}(K)}. 
\end{aligned}\] 
On the other hand, we have $\big|\tilde{\mathcal{E}}(f,g)\big|\lesssim \|f\|_{B^{p,q}_\sigma(K)}\cdot\|g\|_{B^{p',q'}_{2-\sigma}(K)}$. The estimate follows.\hfill$\square$\vspace{0.2cm}

Now, combining Claim 3 and Proposition \ref{prop54}, we can see that $\Lambda^{p,q}_{\sigma,(1)}(K)$ is a closed subset of $B^{p,q}_\sigma(K)$. On the other hand, we have $H^p_2(K)\subset \Lambda^{p,q}_{\sigma,(1)}(K)$ by an easy estimate. So the desired result follows since  $H^p_2(K)$ is dense in $B^{p,q}_\sigma(K)$. For $q=1,\infty$, the result follows simply by Lemma \ref{lemma62} (b).
\end{proof}

By applying Proposition \ref{prop54}, Lemma \ref{lemma62} (b) and Proposition \ref{prop64}, we can finish the proof of Theorem \ref{thm49}.

\begin{proof}[Proof of Theorem \ref{thm49}]
It is easy to see that $H^p_2(K)\subset \Lambda^{p,\infty}_{2,(1)}(K)$. In fact, for each $f\in H^p_2(K)$ and $x\in V_{\Lambda_m}$, we have 
\[H_{\Lambda_m}f(x)=\int_{U_{x,m}}\psi_{x,m}(\Delta f)d\mu,\]
where $U_{x,m}$ is the same we defined in the proof of Lemma \ref{lemma52}, and $\psi_{x,m}$ is a piecewise harmonic function supported on $U_{x,m}$, with $\psi_{x,m}(x)=1$ and $\psi_{x,m}|_{V_{\Lambda_m}\setminus \{x\}}\equiv 0$, and is harmonic in each $F_wK,w\in \Lambda_m$.  As a consequence, we get \[r^{-md_H/p'}\|H_{\Lambda_m}f\|_{l^p(V_{\Lambda_m})}\lesssim \|\Delta f\|_{L^p(K)},\]
which yields that  $H^p_2(K)\subset \Lambda^{p,\infty}_{2,(1)}(K)$. 

Then by applying Lemma \ref{lemma62} (b) and Proposition \ref{prop64}, using real interpolation, we can see that $B^{p,q}_\sigma(K)\subset \Lambda^{p,q}_{\sigma,(1)}(K)$ for $\mathscr{L}_1(p)<\sigma<2$. Combining this with Proposition \ref{prop54}, the theorem follows.
\end{proof}

\subsection{On region $\mathscr{A}_2$}
It remains to show $B^{p,q}_\sigma(K)\subset \Lambda^{p,q}_\sigma(K)$ on $\mathscr{A}_2$. In fact, by Proposition \ref{prop64} and Lemma \ref{lemma62} (a), noticing that $L^p(K)$ is contained in ``$\Lambda^{p,\infty}_0(K)$'', we can simply cover a large portion of $\mathscr{A}_2$, see an illustration in Figure \ref{triarea}. However, it remains unclear for the strip region near $p=1$, if $\mathscr{C}$ and $\mathscr{L}_1$ intersect at some point with $p>1$. We will apply another idea to overcome this. Also, we mention here that a similar method can solve the $\mathscr{A}_1$ region as well with necessary modifications. 

\begin{figure}[h]
	\includegraphics[width=5cm]{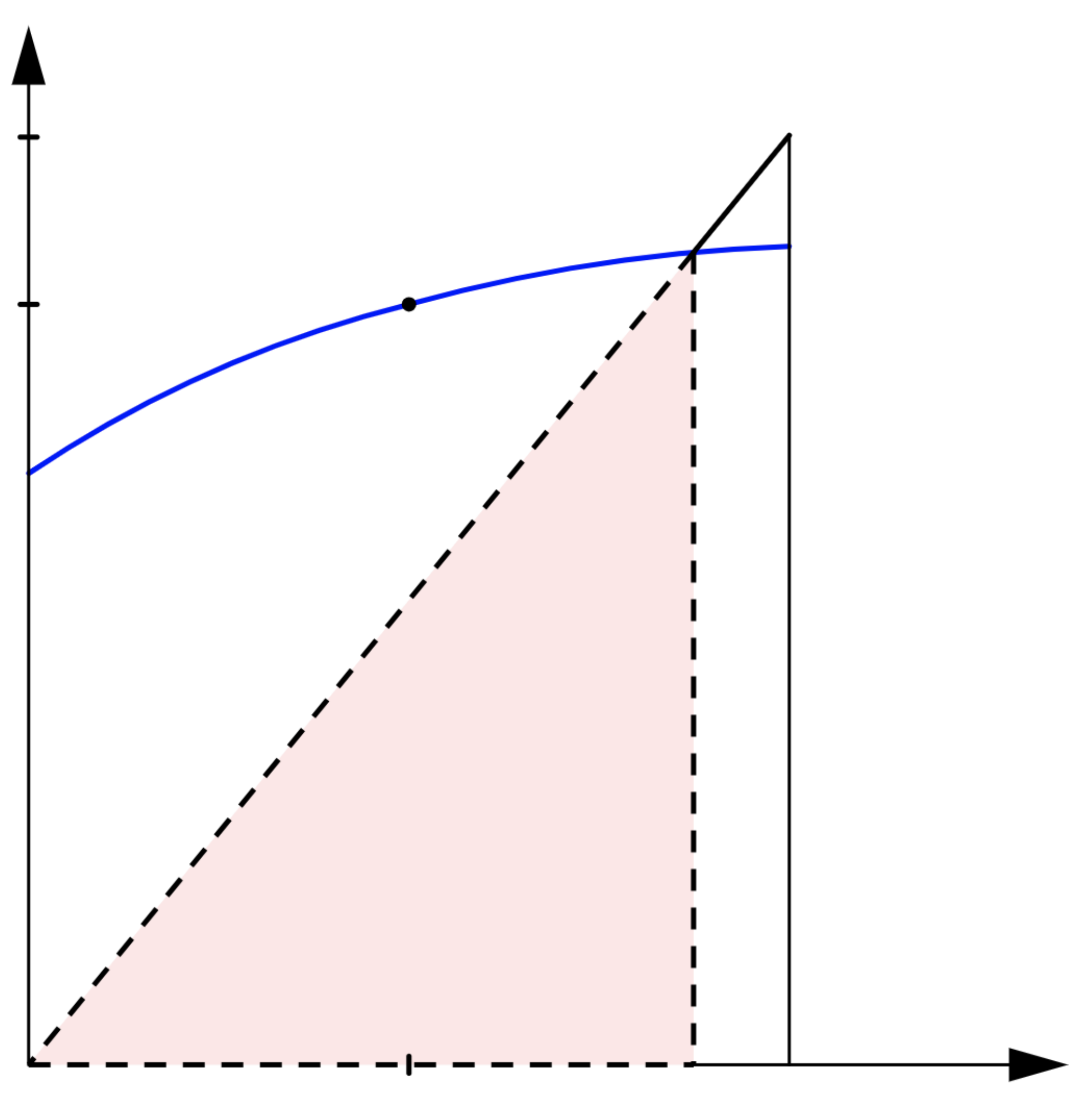}
	\begin{picture}(0,0)
	\put(-152,135){$\sigma$}
	\put(-151,101){$1$}
	\put(-157,122){$d_S$}
	\put(-157,78){$\frac{2}{d_W}$}
	\put(-26,14){$\frac 1p$}
	\put(-45,-6){$1$}
	\end{picture}
	\caption{A portion of $\mathscr A_2$.}
	\label{triarea}
\end{figure}

We will rely on Proposition \ref{prop44} in this part, which says, for $0<\sigma<\mathscr L_1(p)$, it holds that 
\[\|f\|_{\Lambda^{p,q}_\sigma(K)}\asymp \big\|r^{-m\sigma d_W/2}\|\tilde{E}[f|\Lambda_m]\|_{L^p(K)}\big\|_{l^q}.\]
To get a reasonable estimate for $\big\|\tilde{E}[f|\Lambda_m]\big\|_{L^p(K)}$, we start with a new decomposition. 

\begin{definition}\label{def66}
(a). For $m\geq 0$, we define $T_m=\big\{\sum_{w\in \Lambda_m}h_w\circ F_w^{-1}:h_w\in \mathcal{H}_0,\forall w\in \Lambda_m\big\}$.

(b). Write $P_{T_m}$ for the orthogonal projection $L^2(K)\to T_m$ for  $m\geq 0$, and 
	\[
	{P}_{\tilde T_m}=\begin{cases}
	P_{T_0},&\text{ if }m=0,\\
	P_{T_m}-P_{T_{m-1}},&\text{ if }m\geq 1.
	\end{cases}
	\]
Clearly, $P_{T_m}$ extends naturally to $L^p(K)\to T_m$	for any $1\leq p\leq \infty$. 
	
(c). Write $\tilde{T}_m=\big\{P_{\tilde T_m}f:f\in L^1(K)\big\}$ for $m\geq 1$, and write $\tilde{T}_0=T_0$.  
\end{definition}
\noindent\textbf{Remark.} The spaces $T_m$ are collections of piecewise harmonic functions, but may not be continuous at $V_{\Lambda_m}\setminus V_0$. \vspace{0.2cm}

We collect some useful results in the following lemma.

\begin{lemma}\label{lemma67}
Let $1<p<\infty$, $m\geq 0$ and $u\in \tilde{T}_m$.

(a). We have $\tilde{E}[u|\Lambda_n]=0$ if $n<m$.

(b). For any $0<\sigma<\mathscr{C}(p)$, we have $\big\|\tilde{E}[u|\Lambda_n]\big\|_{L^p(K)}\lesssim r^{(n-m)\sigma d_W/2}\|u\|_{L^p(K)}$ for $n\geq m$.
\end{lemma}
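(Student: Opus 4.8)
The plan is to treat the two parts separately: part (a) is a soft orthogonality statement, and part (b) is the substantive estimate, whose crux is a locality property of the Haar differences.

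For part (a), I would use that the conditional expectation $E[\cdot|\Lambda_n]$ of Definition \ref{def42} is precisely the $L^2(K)$-orthogonal projection onto the subspace $PC_n$ of functions that are constant on each cell $\{F_wK:w\in\Lambda_n\}$ (cell overlaps are finite, hence $\mu$-null, so $PC_n$ is a genuine closed subspace). Since constants are harmonic we have $PC_n\subset T_n$, and since a function harmonic on each $\Lambda_n$-cell remains harmonic on every sub-cell, $T_n\subset T_{m-1}$ whenever $n\leq m-1$; thus $PC_n\subset T_{m-1}$ for all $n\leq m-1$. As $u\in\tilde T_m$ is orthogonal to $T_{m-1}$ (recall $\tilde T_m$ is the range of $P_{T_m}-P_{T_{m-1}}$, i.e. the orthogonal complement of $T_{m-1}$ inside $T_m$), its projection onto $PC_n$ vanishes, giving $E[u|\Lambda_n]=0$ for every $n\leq m-1$. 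Hence $\tilde E[u|\Lambda_n]=E[u|\Lambda_n]-E[u|\Lambda_{n-1}]=0$ for $1\leq n<m$, while $\tilde E[u|\Lambda_0]=E[u|\Lambda_0]=0$ because $u\perp T_0=\mathcal H_0$ which contains the constants. This settles (a).

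For part (b) the key observation is that, since $\tilde T_m\subset T_m$, the function $u$ is harmonic on each cell $F_wK$ with $w\in\Lambda_m$, and that $\Lambda_n$ refines $\Lambda_m$ for all $n\geq m$. Consequently, for $n\geq m+1$ both $E[u|\Lambda_n]$ and $E[u|\Lambda_{n-1}]$ are obtained by averaging $u$ over cells contained in a single $\Lambda_m$-cell, so $\tilde E[u|\Lambda_n]\big|_{F_wK}$ depends only on $u|_{F_wK}$ and the norm splits cellwise, $\|\tilde E[u|\Lambda_n]\|_{L^p(K)}^p=\sum_{w\in\Lambda_m}\|\tilde E[u|\Lambda_n]\|_{L^p(F_wK)}^p$. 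The remaining case $n=m$ follows at once from the $L^p$-contractivity of conditional expectations, which gives $\|\tilde E[u|\Lambda_m]\|_{L^p(K)}\lesssim\|u\|_{L^p(K)}=r^0\|u\|_{L^p(K)}$. On each cell I would then pull back by $F_w$: writing $h_w:=u\circ F_w\in\mathcal H_0$, self-similarity identifies $\tilde E[u|\Lambda_n]|_{F_wK}$ (up to the scaling factor $\mu_w^{1/p}$) with a Haar difference of $h_w$ relative to a partition of $K$ at scale $r^n/r_w\asymp r^{n-m}$. Lemma \ref{lemma43}(a) in this rescaled form bounds it by $I_p(h_w,\,Cr^{n-m})$, and since $\sigma<\mathscr C(p)$ the definition of the critical curve (Definition \ref{def31}), together with finite-dimensionality of $\mathcal H_0$, yields $I_p(h_w,t)\lesssim t^{\sigma d_W/2}\|h_w\|_{L^p(K)}$. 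Using $\|h_w\|_{L^p(K)}\asymp\mu_w^{-1/p}\|u\|_{L^p(F_wK)}$ gives the cellwise bound $\|\tilde E[u|\Lambda_n]\|_{L^p(F_wK)}\lesssim r^{(n-m)\sigma d_W/2}\|u\|_{L^p(F_wK)}$, and summing $p$-th powers over $w\in\Lambda_m$ finishes (b). This is the same mechanism as in Lemma \ref{lemma47}(c), now applied to a single harmonic function on each cell.

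The main technical obstacle is the rescaling step: cells of $\Lambda_m$ have resistance-diameters only comparable to $r^m$, so the pulled-back partition is a \emph{cut} at scale $\asymp r^{n-m}$ rather than exactly $\Lambda_{n-m}$, and I would need a cut-version of Lemma \ref{lemma43}(a); its proof uses only that a level-$k$ cell has diameter comparable to its scale and sits in a comparable ball, so it transfers verbatim. The conceptual heart, however, is the locality observation: because the level-$n$ Haar difference never averages across $\Lambda_m$-cell boundaries for $n\geq m+1$, the discontinuities of $u\in\tilde T_m$ at $V_{\Lambda_m}$ play no role, and the decay is governed purely by the harmonic behaviour inside the cells, which is exactly what $\mathscr C(p)$ measures. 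This is what allows the estimate to hold for the full range $\sigma<\mathscr C(p)$, whereas a naive application of Lemma \ref{lemma43}(a) to the global modulus $I_p(u,r^{n-1})$ would incur a cross-cell jump term and only reach $\sigma<\mathscr L_1(p)$.
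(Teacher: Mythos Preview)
Your proposal is correct and follows essentially the same route as the paper: part (a) via the inclusion of piecewise constants in $T_{m-1}$ together with $u\perp T_{m-1}$, and part (b) by reducing to the single-cell harmonic case through pullback by $F_w$ and invoking the definition of $\mathscr{C}(p)$ with Lemma~\ref{lemma43}(a). You are in fact more explicit than the paper about the locality of $\tilde E[\cdot|\Lambda_n]$ on $\Lambda_m$-cells for $n\geq m+1$ and about the ``cut'' issue in the rescaling, both of which the paper's two-line argument leaves implicit.
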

\begin{proof}
(a). By definition, for each $u\in \tilde{T}_m$, we have $P_{T_{m-1}}u=0$. On the other hand, we can see that $\oplus_{l=0}^n \tilde{J}_l\subset T_{m-1}$ since clearly $\tilde{J}_l$ consists of piecewise constant functions. Thus, 
\[\tilde{E}[u|\Lambda_n]=\tilde{E}[P_{T_{m-1}}u|\Lambda_n]=0, \quad \forall n<m.\]

(b). We first look at $m=0$ case. By definition of $\mathscr{C}$, we have $r^{-n\sigma d_W/2}I_p(u,r^n)\lesssim\|u\|_{L^p(K)}$, as $u\in \tilde{T}_m=\mathcal{H}_0$. The claim then follows by applying Lemma \ref{lemma43} (a). 

For general case, for each $w\in\Lambda_m$, we can see that \[r^{-(n-m)\sigma d_W/2}\big\|(E[u|\Lambda_n])\circ F_w\big\|_{L^p(K)}\lesssim\|u\circ F_w\|_{L^p(K)},\quad \forall n\geq m.\]  (b) then follows by scaling and summing the estimates over $\Lambda_m$.
\end{proof}

\begin{proposition}\label{prop68}
	For $1<p<\infty$, $1\leq q\leq \infty$ and  $(\frac{1}{p},\sigma)\in \mathscr{A}_2$, we have $B^{p,q}_\sigma(K)\subset \Lambda^{p,q}_\sigma(K)$ with $\|\cdot\|_{\Lambda^{p,q}_\sigma(K)}\lesssim\|\cdot\|_{B^{p,q}_\sigma(K)}$. 
\end{proposition}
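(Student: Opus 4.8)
The plan is to invoke the Haar-type characterization of Proposition \ref{prop44}, which is available because $(\frac1p,\sigma)\in\mathscr A_2$ forces $\sigma<\mathscr L_1(p)$. Thus it suffices to prove the single estimate
\[\big\|r^{-m\sigma d_W/2}\|\tilde E[f|\Lambda_m]\|_{L^p(K)}\big\|_{l^q}\lesssim \|f\|_{B^{p,q}_\sigma(K)},\qquad f\in B^{p,q}_\sigma(K).\]
The bridge between the heat Besov norm and these Haar coefficients is the piecewise-harmonic decomposition of Definition \ref{def66}: set $u_m=P_{\tilde T_m}f$, so $f=\sum_{m\ge0}u_m$. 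Since constants are harmonic, the orthogonal projection $P_{T_n}$ preserves every cell average at levels $\le n$, so $E[f|\Lambda_n]=E[P_{T_n}f|\Lambda_n]$ and hence $\tilde E[f|\Lambda_n]=\sum_{m=0}^n\tilde E[u_m|\Lambda_n]$ (the tail vanishing by Lemma \ref{lemma67}(a)). This finite-sum identity is what I would feed into Proposition \ref{prop44}.

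The first half of the argument controls $\big\|r^{-m\sigma d_W/2}\|u_m\|_{L^p(K)}\big\|_{l^q}$ by real interpolation. I would establish the two endpoint bounds
\[\|u_m\|_{L^p(K)}\lesssim \|f\|_{L^p(K)}\qquad\text{and}\qquad \|u_m\|_{L^p(K)}\lesssim r^{md_W}\|f\|_{H^p_2(K)},\]
uniformly in $m$. The first is the uniform $L^p$-boundedness of the $P_{T_n}$, which follows from self-similarity, each $P_{T_m}$ being a fixed finite-rank operator transported to the cells $F_wK$. For the second I would compare the orthogonal projection with the cellwise harmonic replacement $\Pi_m f$ (harmonic on each $F_wK$, $w\in\Lambda_m$, matching $f$ on $V_{\Lambda_m}$), writing $f-P_{T_m}f=(I-P_{T_m})(f-\Pi_m f)$ and estimating $\|f-\Pi_m f\|_{L^p(F_wK)}\lesssim r_w\mu_w\|\Delta f\|_{L^p(F_wK)}\asymp r^{md_W}\|\Delta f\|_{L^p(F_wK)}$ via the Dirichlet Green operator and the scaling $\Delta(g\circ F_w)=r_w\mu_w(\Delta g)\circ F_w$, then summing over $w\in\Lambda_m$. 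Recalling $H^p_2(K)=\mathrm{dom}_{L^p(K)}\Delta_N$, Lemma \ref{lemma26} gives $B^{p,q}_\sigma(K)=(L^p(K),H^p_2(K))_{\sigma/2,q}$, and interpolating the operator $f\mapsto\{u_m\}$ between the endpoint sequence spaces $l^\infty_{1}(L^p(K))$ and $l^\infty_{r^{d_W}}(L^p(K))$ by Lemma \ref{lemma61} (so that $\alpha_\theta=r^{\sigma d_W/2}$) yields $\big\|r^{-m\sigma d_W/2}\|u_m\|_{L^p(K)}\big\|_{l^q}\lesssim\|f\|_{B^{p,q}_\sigma(K)}$.

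The second half transfers this to the Haar side. Fixing $\sigma<\eta<\mathscr C(p)$, Lemma \ref{lemma67}(b) gives $\|\tilde E[u_m|\Lambda_n]\|_{L^p(K)}\lesssim r^{(n-m)\eta d_W/2}\|u_m\|_{L^p(K)}$ for $n\ge m$, so that
\[r^{-n\sigma d_W/2}\|\tilde E[f|\Lambda_n]\|_{L^p(K)}\lesssim \sum_{m=0}^n r^{(n-m)(\eta-\sigma)d_W/2}\,r^{-m\sigma d_W/2}\|u_m\|_{L^p(K)}.\]
The crucial point is the strict inequality $\eta>\sigma$, which turns the kernel $r^{(n-m)(\eta-\sigma)d_W/2}$ into a summable geometric sequence; a discrete Young (Minkowski) inequality then bounds the $l^q$ norm of the left side by $\big\|r^{-m\sigma d_W/2}\|u_m\|_{L^p(K)}\big\|_{l^q}$, and the previous paragraph closes the loop. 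The main obstacle I anticipate is the $H^p_2$ endpoint bound: one must control the discrepancy between the $L^2$-orthogonal projection $P_{T_m}$ and the harmonic replacement and track the Green-operator scaling $r_w\mu_w\asymp r^{md_W}$ on each cell carefully. Beyond that, working with a genuine $\eta>\sigma$ rather than at the endpoint $\sigma$ is precisely what makes the final summation converge.
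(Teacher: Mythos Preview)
Your proposal is correct and follows essentially the same route as the paper: reduce via Proposition \ref{prop44}, prove the two endpoint bounds $\|P_{\tilde T_m}f\|_{L^p}\lesssim\|f\|_{L^p}$ and $\|P_{\tilde T_m}f\|_{L^p}\lesssim r^{md_W}\|\Delta f\|_{L^p}$, interpolate via Lemma \ref{lemma61} to get $l^q_{r^{\sigma d_W/2}}$-control of $\{P_{\tilde T_m}f\}$, then pass to the Haar coefficients through Lemma \ref{lemma67} and a geometric-kernel Young inequality with $\sigma<\eta<\mathscr C(p)$. The only cosmetic difference is that for the $H^p_2$ endpoint the paper writes $f-P_{T_m}f=(I-P_{T_m})G_m(-\Delta f)$ using the cellwise Dirichlet Green operator directly, whereas you phrase the same identity via the harmonic replacement $\Pi_m f$; since $f-\Pi_m f=G_m(-\Delta f)$ cell by cell, the two are identical.
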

\begin{proof}
Let $f\in B^{p,q}_\sigma(K)$, it suffices to show that $\big\|r^{-m\sigma d_W/2}\|\tilde{E}[f|\Lambda_m]\|_{L^p(K)}\big\|_{l^q}\lesssim \|f\|_{B^{p,q}_\sigma(K)}$ by applying Proposition \ref{prop44}. 

For convenience, we write $\mathscr{P}:L^p(K)\to \prod_{m=0}^\infty \tilde{T}_m$, defined as $\mathscr{P}(f)_m=P_{\tilde T_m}f$. Also, we equip each $\tilde{T}_m$ with the $L^p$ norm. It is obvious that $P_{T_m}$ is from $L^p(K)$ to $L^p(K)$, so is $P_{\tilde T_m}$. We have the following claims.
\vspace{0.2cm}
 
\noindent\textit{Claim 1. $\mathscr{P}$ is bounded from $L^p(K)$ to $l^\infty(\tilde{T}_\cdot)$.}\vspace{0.2cm}

\noindent\textit{Claim 2. $\mathscr{P}$ is bounded from $H^p_2(K)$ to $l^\infty_{r^{d_W}}(\tilde{T}_\cdot)$.}

\textit{Proof.} Let $G$ be the Green's operator on $K$ \cite{ki3,s3}. For any $f\in H^p_2(K)=H^p_{2,D}(K)\oplus \mathcal{H}_0$, we have 
\[\|f-P_{T_0}f\|_{L^p(K)}\lesssim \|G\Delta f\|_{L^p(K)}\lesssim \|\Delta f\|_{L^p(K)},\]
where the first inequality is due to the fact that $f-G(-\Delta) f\in T_0=\mathcal{H}_0$, and the second inequality is due to the fact that $G$ is bounded from $L^p(K)$ to $L^p(K)$. We apply the above estimate locally on each $F_wK$ with $w\in \Lambda_m$ to get 
\[\|f-P_{T_m}f\|_{L^p(K)}\lesssim r^{md_W}\|\Delta f\|_{L^p(K)},\]
by using the scaling property of $\Delta f$. Thus, we have \[\|{P}_{\tilde T_m}f\|_{L^p(K)}\lesssim \|f-P_{ T_m}f\|_{L^p(K)}+ \|f-P_{ T_{m-1}}f\|_{L^p(K)}\lesssim r^{md_W}\|\Delta f\|_{L^p(K)}.\]
This finishes the proof of Claim 2.\hfill$\square$\vspace{0.2cm}

Combining Claim 1 and Claim 2, and using Lemma \ref{lemma61}, we see the following claim.\vspace{0.2cm}

\noindent\textit{Claim 3. For $1<p<\infty$, $1\leq q\leq \infty$ and $0<\sigma<2$,  $\mathscr{P}$ is bounded from $B^{p,q}_\sigma(K)$ to $l^q_{r^{\sigma d_W/2}}(\tilde{T}_\cdot)$. }\vspace{0.2cm}

Now we turn to the proof of the proposition. We fix a parameter point $(\frac 1p, \sigma)$ in $\mathscr A_2$. By Claim 3, we can see that for each $f\in B^{p,q}_\sigma(K)$ we clearly have $f=\sum_{m=0}^\infty P_{\tilde T_m}f$, with the series absolute convergent in $L^p(K)$. Thus, we have 
\[\tilde{E}[f|\Lambda_m]=\sum_{n=0}^\infty \tilde{E}[P_{\tilde T_n}f|\Lambda_m]=\sum_{n=0}^m \tilde{E}[P_{\tilde T_n}f|\Lambda_m],\]
where the second equality is due to Lemma \ref{lemma67} (a). In addition, by applying Lemma \ref{lemma67} (b), we have the estimate
\[\big\|\tilde{E}[f|\Lambda_m]\big\|_{L^p(K)}\lesssim \sum_{n=0}^m r^{(m-n)\eta d_W/2}\|P_{\tilde T_n}f\|_{L^p(K)},\]
where $\eta$ is a fixed number such that $\sigma<\eta<\mathscr{C}(p)$. As a consequence, we then have 
\[\begin{aligned}
\big\|r^{-m\sigma d_W/2}\|\tilde{E}[f|\Lambda_m]\|_{L^p(K)}\big\|_{l^q}&\lesssim \big\|r^{-m\sigma d_W/2}\sum_{n=0}^m r^{(m-n)\eta d_W/2}\|P_{\tilde T_n}f\|_{L^p(K)}\big\|_{l^q}\\
&= \big\|r^{-m\sigma d_W/2}\sum_{n=0}^m r^{n\eta d_W/2}\|P_{\tilde T_{m-n}}f\|_{L^p(K)}\big\|_{l^q}\\
&\leq \sum_{n=0}^\infty r^{n(\eta-\sigma)d_W/2}\cdot\big\|r^{-m\sigma d_W/2}\|P_{\tilde T_m}f\|_{L^p(K)}\big\|_{l^q}\lesssim \|f\|_{B^{p,q}_\sigma(K)},
\end{aligned}\]
where we use Claim 3 again in the last inequality.
\end{proof}

\noindent\textbf{Remark.} We can apply a similar argument as Lemma \ref{lemma67} and Proposition \ref{prop68} for $(\frac 1p,\sigma)\in 
\mathscr B$ to show that $B^{p,q}_\sigma(K)\subset \Lambda^{p,q}_{\sigma,(1)}(K)$, as stated in Proposition \ref{prop64}. The difference is that $f_m\in J_m$ in the tent function expansion of $f=\sum_{m=0}^\infty f_m$ depends on $P_{\tilde T_n}f$ for $n\geq m$. This gives a second proof of Proposition $\ref{prop64}$.  \vspace{0.2cm}

We finish this section with a conclusion that Theorem \ref{thm11} holds.\vspace{0.2cm}

\noindent\textit{Proof of Theorem \ref{thm11}.}  On $\mathscr{A}_1$, the theorem follows from Proposition \ref{prop64}; on $\mathscr{A}_2$, the theorem follows from Proposition \ref{prop53} and Proposition \ref{prop68}; lastly, on the border between $\mathscr{A}_1$ and $\mathscr{A}_2$, we  have $B^{p,q}_\sigma(K)\subset \Lambda^{p,q}_\sigma(K)$ by interpolation using Lemma \ref{lemma62} (a), as well as the other direction is covered by Proposition \ref{prop53}. \hfill$\square$

\section*{Acknowledgments}
The authors are grateful  to Lijian Yang for providing the figures.

\bibliographystyle{amsplain}

\end{document}